\newtheorem{theorem}{Theorem}[section]
\newtheorem{proposition}[theorem]{Proposition}
\newtheorem{lemma}[theorem]{Lemma}
\newtheorem{corollary}[theorem]{Corollary}
\newtheorem{remark}[theorem]{Remark}
\newtheorem{example}[theorem]{Example}
\newtheorem{definition}[theorem]{Definition}
\renewcommand{\theequation}{\thesection.\arabic{equation}}
\let\pdfoutput=\undefined\fi
\chardef\@x10\chardef\@xv60
\def\tcitime{
\def\@time{%
  \@minute\time\@hour\@minute\divide\@hour\@xv
  \ifnum\@hour<\@x 0\fi\the\@hour:%
  \multiply\@hour\@xv\advance\@minute-\@hour
  \ifnum\@minute<\@x 0\fi\the\@minute
  }}%
\def\x@hyperref#1#2#3{%
   \catcode`\~ = 12
   \catcode`\$ = 12
   \catcode`\_ = 12
   \catcode`\# = 12
   \catcode`\& = 12
   \y@hyperref{#1}{#2}{#3}%
}
\def\y@hyperref#1#2#3#4{%
   #2\ref{#4}#3
   \catcode`\~ = 13
   \catcode`\$ = 3
   \catcode`\_ = 8
   \catcode`\# = 6
   \catcode`\& = 4
}
\def\QCTOpt[#1]#2{%
  \def\QCTOptB{#1}
  \def\QCTOptA{#2}
}
\def\QCTNOpt#1{%
  \def\QCTOptA{#1}
  \let\QCTOptB\empty
}
\def\Qct{%
  \@ifnextchar[{%
    \QCTOpt}{\QCTNOpt}
}
\def\QCBOpt[#1]#2{%
  \def\QCBOptB{#1}%
  \def\QCBOptA{#2}%
}
\def\QCBNOpt#1{%
  \def\QCBOptA{#1}%
  \let\QCBOptB\empty
}
\def\Qcb{%
  \@ifnextchar[{%
    \QCBOpt}{\QCBNOpt}%
}
\def\PrepCapArgs{%
  \ifx\QCBOptA\empty
    \ifx\QCTOptA\empty
      {}%
    \else
      \ifx\QCTOptB\empty
        {\QCTOptA}%
      \else
        [\QCTOptB]{\QCTOptA}%
      \fi
    \fi
  \else
    \ifx\QCBOptA\empty
      {}%
    \else
      \ifx\QCBOptB\empty
        {\QCBOptA}%
      \else
        [\QCBOptB]{\QCBOptA}%
      \fi
    \fi
  \fi
}
\def\GRAPHICSPS#1{%
 \ifcase\GRAPHICSTYPE
   \special{ps: #1}%
 \or
   \special{language "PS", include "#1"}%
 \fi
}%
\def\graffile#1#2#3#4{%
    \bgroup
	   \@inlabelfalse
       \leavevmode
       \@ifundefined{bbl@deactivate}{\def~{\string~}}{\activesoff}%
        \raise -#4 \BOXTHEFRAME{%
           \hbox to #2{\raise #3\hbox to #2{\null #1\hfil}}}%
    \egroup
}%
\def\draftbox#1#2#3#4{%
 \leavevmode\raise -#4 \hbox{%
  \frame{\rlap{\protect\tiny #1}\hbox to #2%
   {\vrule height#3 width\z@ depth\z@\hfil}%
  }%
 }%
}%
\let\nographics=\@msidraft
\newif\ifwasdraft
\def\GRAPHIC#1#2#3#4#5{%
   \ifnum\@msidraft=\@ne\draftbox{#2}{#3}{#4}{#5}%
   \else\graffile{#1}{#3}{#4}{#5}%
   \fi
}
\def\addtoLaTeXparams#1{%
    \edef\LaTeXparams{\LaTeXparams #1}}%
\newif\ifBoxFrame \BoxFramefalse
\newif\ifOverFrame \OverFramefalse
\newif\ifUnderFrame \UnderFramefalse
\def\BOXTHEFRAME#1{%
   \hbox{%
      \ifBoxFrame
         \frame{#1}%
      \else
         {#1}%
      \fi
   }%
}
\def\doFRAMEparams#1{\BoxFramefalse\OverFramefalse\UnderFramefalse\readFRAMEparams#1\end}%
\def\readFRAMEparams#1{%
 \ifx#1\end%
  \let\next=\relax
  \else
  \ifx#1i\dispkind=\z@\fi
  \ifx#1d\dispkind=\@ne\fi
  \ifx#1f\dispkind=\tw@\fi
  \ifx#1t\addtoLaTeXparams{t}\fi
  \ifx#1b\addtoLaTeXparams{b}\fi
  \ifx#1p\addtoLaTeXparams{p}\fi
  \ifx#1h\addtoLaTeXparams{h}\fi
  \ifx#1X\BoxFrametrue\fi
  \ifx#1O\OverFrametrue\fi
  \ifx#1U\UnderFrametrue\fi
  \ifx#1w
    \ifnum\@msidraft=1\wasdrafttrue\else\wasdraftfalse\fi
    \@msidraft=\@ne
  \fi
  \let\next=\readFRAMEparams
  \fi
 \next
 }%
\def\IFRAME#1#2#3#4#5#6{%
      \bgroup
      \let\QCTOptA\empty
      \let\QCTOptB\empty
      \let\QCBOptA\empty
      \let\QCBOptB\empty
      #6%
      \parindent=0pt
      \leftskip=0pt
      \rightskip=0pt
      \setbox0=\hbox{\QCBOptA}%
      \@tempdima=#1\relax
      \ifOverFrame
          \typeout{This is not implemented yet}%
          \show\HELP
      \else
         \ifdim\wd0>\@tempdima
            \advance\@tempdima by \@tempdima
            \ifdim\wd0 >\@tempdima
               \setbox1 =\vbox{%
                  \unskip\hbox to \@tempdima{\hfill\GRAPHIC{#5}{#4}{#1}{#2}{#3}\hfill}%
                  \unskip\hbox to \@tempdima{\parbox[b]{\@tempdima}{\QCBOptA}}%
               }%
               \wd1=\@tempdima
            \else
               \textwidth=\wd0
               \setbox1 =\vbox{%
                 \noindent\hbox to \wd0{\hfill\GRAPHIC{#5}{#4}{#1}{#2}{#3}\hfill}\\%
                 \noindent\hbox{\QCBOptA}%
               }%
               \wd1=\wd0
            \fi
         \else
            \ifdim\wd0>0pt
              \hsize=\@tempdima
              \setbox1=\vbox{%
                \unskip\GRAPHIC{#5}{#4}{#1}{#2}{0pt}%
                \break
                \unskip\hbox to \@tempdima{\hfill \QCBOptA\hfill}%
              }%
              \wd1=\@tempdima
           \else
              \hsize=\@tempdima
              \setbox1=\vbox{%
                \unskip\GRAPHIC{#5}{#4}{#1}{#2}{0pt}%
              }%
              \wd1=\@tempdima
           \fi
         \fi
         \@tempdimb=\ht1
         \advance\@tempdimb by -#2
         \advance\@tempdimb by #3
         \leavevmode
         \raise -\@tempdimb \hbox{\box1}%
      \fi
      \egroup%
}%
\def\DFRAME#1#2#3#4#5{%
  \vspace\topsep
  \hfil\break
  \bgroup
     \leftskip\@flushglue
	 \rightskip\@flushglue
	 \parindent\z@
	 \parfillskip\z@skip
     \let\QCTOptA\empty
     \let\QCTOptB\empty
     \let\QCBOptA\empty
     \let\QCBOptB\empty
	 \vbox\bgroup
        \ifOverFrame 
           #5\QCTOptA\par
        \fi
        \GRAPHIC{#4}{#3}{#1}{#2}{\z@}%
        \ifUnderFrame 
           \break#5\QCBOptA
        \fi
	 \egroup
  \egroup
  \vspace\topsep
  \break
}%
\def\FFRAME#1#2#3#4#5#6#7{%
  \@ifundefined{floatstyle}
    {
     \begin{figure}[#1]%
    }
    {
	 \ifx#1h
      \begin{figure}[H]%
	 \else
      \begin{figure}[#1]%
	 \fi
	}
  \let\QCTOptA\empty
  \let\QCTOptB\empty
  \let\QCBOptA\empty
  \let\QCBOptB\empty
  \ifOverFrame
    #4
    \ifx\QCTOptA\empty
    \else
      \ifx\QCTOptB\empty
        \caption{\QCTOptA}%
      \else
        \caption[\QCTOptB]{\QCTOptA}%
      \fi
    \fi
    \ifUnderFrame\else
      \label{#5}%
    \fi
  \else
    \UnderFrametrue%
  \fi
  \begin{center}\GRAPHIC{#7}{#6}{#2}{#3}{\z@}\end{center}%
  \ifUnderFrame
    #4
    \ifx\QCBOptA\empty
      \caption{}%
    \else
      \ifx\QCBOptB\empty
        \caption{\QCBOptA}%
      \else
        \caption[\QCBOptB]{\QCBOptA}%
      \fi
    \fi
    \label{#5}%
  \fi
  \end{figure}%
 }%
\def\makeactives{
  \catcode`\"=\active
  \catcode`\;=\active
  \catcode`\:=\active
  \catcode`\'=\active
  \catcode`\~=\active
}
   \gdef\activesoff{%
      \def"{\string"}%
      \def;{\string;}%
      \def:{\string:}%
      \def'{\string'}%
      \def~{\string~}%
    }
\def\FRAME#1#2#3#4#5#6#7#8{%
 \bgroup
 \ifnum\@msidraft=\@ne
   \wasdrafttrue
 \else
   \wasdraftfalse%
 \fi
 \def\LaTeXparams{}%
 \dispkind=\z@
 \def\LaTeXparams{}%
 \doFRAMEparams{#1}%
 \ifnum\dispkind=\z@\IFRAME{#2}{#3}{#4}{#7}{#8}{#5}\else
  \ifnum\dispkind=\@ne\DFRAME{#2}{#3}{#7}{#8}{#5}\else
   \ifnum\dispkind=\tw@
    \edef\@tempa{\noexpand\FFRAME{\LaTeXparams}}%
    \@tempa{#2}{#3}{#5}{#6}{#7}{#8}%
    \fi
   \fi
  \fi
  \ifwasdraft\@msidraft=1\else\@msidraft=0\fi{}%
  \egroup
 }%
\def\TEXUX#1{"texux"}
\long\def\QQQ#1#2{%
     \long\expandafter\def\csname#1\endcsname{#2}}%
\long\def\QQA#1#2{}%
\def\QTR#1#2{{\csname#1\endcsname {#2}}}%
\def\EXPAND#1[#2]#3{}%
\def\NOEXPAND#1[#2]#3{}%
\def\LaTeXparent#1{}%
\def\ChildStyles#1{}%
\def\ChildDefaults#1{}%
\def\QTagDef#1#2#3{}%
  \providecommand{\UNICODE}[2][]{\protect\rule{.1in}{.1in}}
  \providecommand{\U}[1]{\protect\rule{.1in}{.1in}}
\def\QQfnmark#1{\footnotemark}
 \def\abstract{%
  \if@twocolumn
   \section*{Abstract (Not appropriate in this style!)}%
   \else \small 
   \begin{center}{\bf Abstract\vspace{-.5em}\vspace{\z@}}\end{center}%
   \quotation 
   \fi
  }%
   \def\registered{\relax\ifmmode{}\r@gistered
                    \else$\m@th\r@gistered$\fi}%
 \def\r@gistered{^{\ooalign
  {\hfil\raise.07ex\hbox{$\scriptstyle\rm\text{R}$}\hfil\crcr
  \mathhexbox20D}}}}{}%
\newdimen\theight
\def\newfmtname{LaTeX2e}
  \DeclareOldFontCommand{\rm}{\normalfont\rmfamily}{\mathrm}
  \DeclareOldFontCommand{\sf}{\normalfont\sffamily}{\mathsf}
  \DeclareOldFontCommand{\tt}{\normalfont\ttfamily}{\mathtt}
  \DeclareOldFontCommand{\bf}{\normalfont\bfseries}{\mathbf}
  \DeclareOldFontCommand{\it}{\normalfont\itshape}{\mathit}
  \DeclareOldFontCommand{\sl}{\normalfont\slshape}{\@nomath\sl}
  \DeclareOldFontCommand{\sc}{\normalfont\scshape}{\@nomath\sc}
\def\alpha{{\Greekmath 010B}}%
\def\beta{{\Greekmath 010C}}%
\def\gamma{{\Greekmath 010D}}%
\def\delta{{\Greekmath 010E}}%
\def\epsilon{{\Greekmath 010F}}%
\def\zeta{{\Greekmath 0110}}%
\def\eta{{\Greekmath 0111}}%
\def\theta{{\Greekmath 0112}}%
\def\iota{{\Greekmath 0113}}%
\def\kappa{{\Greekmath 0114}}%
\def\lambda{{\Greekmath 0115}}%
\def\mu{{\Greekmath 0116}}%
\def\nu{{\Greekmath 0117}}%
\def\xi{{\Greekmath 0118}}%
\def\pi{{\Greekmath 0119}}%
\def\rho{{\Greekmath 011A}}%
\def\sigma{{\Greekmath 011B}}%
\def\tau{{\Greekmath 011C}}%
\def\upsilon{{\Greekmath 011D}}%
\def\phi{{\Greekmath 011E}}%
\def\chi{{\Greekmath 011F}}%
\def\psi{{\Greekmath 0120}}%
\def\omega{{\Greekmath 0121}}%
\def\varepsilon{{\Greekmath 0122}}%
\def\vartheta{{\Greekmath 0123}}%
\def\varpi{{\Greekmath 0124}}%
\def\varrho{{\Greekmath 0125}}%
\def\varsigma{{\Greekmath 0126}}%
\def\varphi{{\Greekmath 0127}}%
\def\nabla{{\Greekmath 0272}}
\def\FindBoldGroup{%
   {\setbox0=\hbox{$\mathbf{x\global\edef\theboldgroup{\the\mathgroup}}$}}%
}
\def\Greekmath#1#2#3#4{%
    \if@compatibility
        \ifnum\mathgroup=\symbold
           \mathchoice{\mbox{\boldmath$\displaystyle\mathchar"#1#2#3#4$}}%
                      {\mbox{\boldmath$\textstyle\mathchar"#1#2#3#4$}}%
                      {\mbox{\boldmath$\scriptstyle\mathchar"#1#2#3#4$}}%
                      {\mbox{\boldmath$\scriptscriptstyle\mathchar"#1#2#3#4$}}%
        \else
           \mathchar"#1#2#3#4%
        \fi 
    \else 
        \FindBoldGroup
        \ifnum\mathgroup=\theboldgroup 
           \mathchoice{\mbox{\boldmath$\displaystyle\mathchar"#1#2#3#4$}}%
                      {\mbox{\boldmath$\textstyle\mathchar"#1#2#3#4$}}%
                      {\mbox{\boldmath$\scriptstyle\mathchar"#1#2#3#4$}}%
                      {\mbox{\boldmath$\scriptscriptstyle\mathchar"#1#2#3#4$}}%
        \else
           \mathchar"#1#2#3#4%
        \fi     	    
	  \fi}
\newif\ifGreekBold  \GreekBoldfalse
\let\SAVEPBF=\pbf
\def\pbf{\GreekBoldtrue\SAVEPBF}%
  \newcounter{equationnumber}  
  \def\mathletters{%
     \addtocounter{equation}{1}
     \edef\@currentlabel{\theequation}%
     \setcounter{equationnumber}{\c@equation}
     \setcounter{equation}{0}%
     \edef\theequation{\@currentlabel\noexpand\alph{equation}}%
  }
    \def\BibTeX{{\rm B\kern-.05em{\sc i\kern-.025em b}\kern-.08em
                 T\kern-.1667em\lower.7ex\hbox{E}\kern-.125emX}}}{}%
\def\AmS{{\protect\usefont{OMS}{cmsy}{m}{n}%
                A\kern-.1667em\lower.5ex\hbox{M}\kern-.125emS}}}{}%
\def\@@eqncr{\let\@tempa\relax
    \ifcase\@eqcnt \def\@tempa{& & &}\or \def\@tempa{& &}%
      \else \def\@tempa{&}\fi
     \@tempa
     \if@eqnsw
        \iftag@
           \@taggnum
        \else
           \@eqnnum\stepcounter{equation}%
        \fi
     \fi
     \global\tag@false
     \global\@eqnswtrue
     \global\@eqcnt\z@\cr}
\def\TCItag{\@ifnextchar*{\@TCItagstar}{\@TCItag}}
\def\@TCItag#1{%
    \global\tag@true
    \global\def\@taggnum{(#1)}%
    \global\def\@currentlabel{#1}}
\def\@TCItagstar*#1{%
    \global\tag@true
    \global\def\@taggnum{#1}%
    \global\def\@currentlabel{#1}}
\def\ExitTCILatex{\makeatother }
\if@compatibility\message{amsmath already loaded}\fi\aftergroup\ExitTCILatex}
\if@compatibility\message{amstex already loaded}\fi\aftergroup\ExitTCILatex}
\if@compatibility\message{amsgen already loaded}\fi\aftergroup\ExitTCILatex}
\let\DOTSI\relax
\def\RIfM@{\relax\ifmmode}%
\def\FN@{\futurelet\next}%
\def\iint{\DOTSI\intno@\tw@\FN@\ints@}%
\def\iiint{\DOTSI\intno@\thr@@\FN@\ints@}%
\def\iiiint{\DOTSI\intno@4 \FN@\ints@}%
\def\idotsint{\DOTSI\intno@\z@\FN@\ints@}%
\def\ints@{\findlimits@\ints@@}%
\newif\iflimtoken@
\newif\iflimits@
\def\findlimits@{\limtoken@true\ifx\next\limits\limits@true
 \else\ifx\next\nolimits\limits@false\else
 \limtoken@false\ifx\ilimits@\nolimits\limits@false\else
 \ifinner\limits@false\else\limits@true\fi\fi\fi\fi}%
\def\multint@{\int\ifnum\intno@=\z@\intdots@                          
 \else\intkern@\fi                                                    
 \ifnum\intno@>\tw@\int\intkern@\fi                                   
 \ifnum\intno@>\thr@@\int\intkern@\fi                                 
 \int}
\def\multintlimits@{\intop\ifnum\intno@=\z@\intdots@\else\intkern@\fi
 \ifnum\intno@>\tw@\intop\intkern@\fi
 \ifnum\intno@>\thr@@\intop\intkern@\fi\intop}%
\def\intic@{%
    \mathchoice{\hskip.5em}{\hskip.4em}{\hskip.4em}{\hskip.4em}}%
\def\negintic@{\mathchoice
 {\hskip-.5em}{\hskip-.4em}{\hskip-.4em}{\hskip-.4em}}%
\def\ints@@{\iflimtoken@                                              
 \def\ints@@@{\iflimits@\negintic@
   \mathop{\intic@\multintlimits@}\limits                             
  \else\multint@\nolimits\fi                                          
  \eat@}
 \else                                                                
 \def\ints@@@{\iflimits@\negintic@
  \mathop{\intic@\multintlimits@}\limits\else
  \multint@\nolimits\fi}\fi\ints@@@}%
\def\intkern@{\mathchoice{\!\!\!}{\!\!}{\!\!}{\!\!}}%
\def\plaincdots@{\mathinner{\cdotp\cdotp\cdotp}}%
\def\intdots@{\mathchoice{\plaincdots@}%
 {{\cdotp}\mkern1.5mu{\cdotp}\mkern1.5mu{\cdotp}}%
 {{\cdotp}\mkern1mu{\cdotp}\mkern1mu{\cdotp}}%
 {{\cdotp}\mkern1mu{\cdotp}\mkern1mu{\cdotp}}}%
\def\RIfM@{\relax\protect\ifmmode}
\def\text{\RIfM@\expandafter\text@\else\expandafter\mbox\fi}
\let\nfss@text\text
\def\text@#1{\mathchoice
   {\textdef@\displaystyle\f@size{#1}}%
   {\textdef@\textstyle\tf@size{\firstchoice@false #1}}%
   {\textdef@\textstyle\sf@size{\firstchoice@false #1}}%
   {\textdef@\textstyle \ssf@size{\firstchoice@false #1}}%
   \glb@settings}
\def\textdef@#1#2#3{\hbox{{%
                    \everymath{#1}%
                    \let\f@size#2\selectfont
                    #3}}}
\newif\iffirstchoice@
\def\Let@{\relax\iffalse{\fi\let\\=\cr\iffalse}\fi}%
\def\vspace@{\def\vspace##1{\crcr\noalign{\vskip##1\relax}}}%
\def\multilimits@{\bgroup\vspace@\Let@
 \baselineskip\fontdimen10 \scriptfont\tw@
 \advance\baselineskip\fontdimen12 \scriptfont\tw@
 \lineskip\thr@@\fontdimen8 \scriptfont\thr@@
 \lineskiplimit\lineskip
 \vbox\bgroup\ialign\bgroup\hfil$\m@th\scriptstyle{##}$\hfil\crcr}%
\def\Sb{_\multilimits@}%
\def\endSb{\crcr\egroup\egroup\egroup}%
\def\Sp{^\multilimits@}%
\newdimen\ex@
\def\rightarrowfill@#1{$#1\m@th\mathord-\mkern-6mu\cleaders
 \hbox{$#1\mkern-2mu\mathord-\mkern-2mu$}\hfill
 \mkern-6mu\mathord\rightarrow$}%
\def\leftarrowfill@#1{$#1\m@th\mathord\leftarrow\mkern-6mu\cleaders
 \hbox{$#1\mkern-2mu\mathord-\mkern-2mu$}\hfill\mkern-6mu\mathord-$}%
\def\leftrightarrowfill@#1{$#1\m@th\mathord\leftarrow
\mkern-6mu\cleaders
 \hbox{$#1\mkern-2mu\mathord-\mkern-2mu$}\hfill
 \mkern-6mu\mathord\rightarrow$}%
\def\overrightarrow{\mathpalette\overrightarrow@}%
\def\overrightarrow@#1#2{\vbox{\ialign{##\crcr\rightarrowfill@#1\crcr
 \noalign{\kern-\ex@\nointerlineskip}$\m@th\hfil#1#2\hfil$\crcr}}}%
\def\overleftarrow{\mathpalette\overleftarrow@}%
\def\overleftarrow@#1#2{\vbox{\ialign{##\crcr\leftarrowfill@#1\crcr
 \noalign{\kern-\ex@\nointerlineskip}$\m@th\hfil#1#2\hfil$\crcr}}}%
\def\overleftrightarrow{\mathpalette\overleftrightarrow@}%
\def\overleftrightarrow@#1#2{\vbox{\ialign{##\crcr
   \leftrightarrowfill@#1\crcr
 \noalign{\kern-\ex@\nointerlineskip}$\m@th\hfil#1#2\hfil$\crcr}}}%
\def\underrightarrow{\mathpalette\underrightarrow@}%
\def\underrightarrow@#1#2{\vtop{\ialign{##\crcr$\m@th\hfil#1#2\hfil
  $\crcr\noalign{\nointerlineskip}\rightarrowfill@#1\crcr}}}%
\def\underleftarrow{\mathpalette\underleftarrow@}%
\def\underleftarrow@#1#2{\vtop{\ialign{##\crcr$\m@th\hfil#1#2\hfil
  $\crcr\noalign{\nointerlineskip}\leftarrowfill@#1\crcr}}}%
\def\underleftrightarrow{\mathpalette\underleftrightarrow@}%
\def\underleftrightarrow@#1#2{\vtop{\ialign{##\crcr$\m@th
  \hfil#1#2\hfil$\crcr
 \noalign{\nointerlineskip}\leftrightarrowfill@#1\crcr}}}%
\def\qopnamewl@#1{\mathop{\operator@font#1}\nlimits@}
\let\nlimits@\displaylimits
\def\setboxz@h{\setbox\z@\hbox}
\def\varlim@#1#2{\mathop{\vtop{\ialign{##\crcr
 \hfil$#1\m@th\operator@font lim$\hfil\crcr
 \noalign{\nointerlineskip}#2#1\crcr
 \noalign{\nointerlineskip\kern-\ex@}\crcr}}}}
 \def\rightarrowfill@#1{\m@th\setboxz@h{$#1-$}\ht\z@\z@
  $#1\copy\z@\mkern-6mu\cleaders
  \hbox{$#1\mkern-2mu\box\z@\mkern-2mu$}\hfill
  \mkern-6mu\mathord\rightarrow$}
\def\leftarrowfill@#1{\m@th\setboxz@h{$#1-$}\ht\z@\z@
  $#1\mathord\leftarrow\mkern-6mu\cleaders
  \hbox{$#1\mkern-2mu\copy\z@\mkern-2mu$}\hfill
  \mkern-6mu\box\z@$}
\def\projlim{\qopnamewl@{proj\,lim}}
\def\injlim{\qopnamewl@{inj\,lim}}
\def\varinjlim{\mathpalette\varlim@\rightarrowfill@}
\def\varprojlim{\mathpalette\varlim@\leftarrowfill@}
\def\varliminf{\mathpalette\varliminf@{}}
\def\varliminf@#1{\mathop{\underline{\vrule\@depth.2\ex@\@width\z@
   \hbox{$#1\m@th\operator@font lim$}}}}
\def\varlimsup{\mathpalette\varlimsup@{}}
\def\varlimsup@#1{\mathop{\overline
  {\hbox{$#1\m@th\operator@font lim$}}}}
\def\align{\@verbatim \frenchspacing\@vobeyspaces \@alignverbatim
You are using the "align" environment in a style in which it is not defined.}
\let\csname endalign*\endcsname =\endtrivlist
\def\alignat{\@verbatim \frenchspacing\@vobeyspaces \@alignatverbatim
You are using the "alignat" environment in a style in which it is not defined.}
\let\csname endalignat*\endcsname =\endtrivlist
\def\xalignat{\@verbatim \frenchspacing\@vobeyspaces \@xalignatverbatim
You are using the "xalignat" environment in a style in which it is not defined.}
\let\csname endxalignat*\endcsname =\endtrivlist
\def\gather{\@verbatim \frenchspacing\@vobeyspaces \@gatherverbatim
You are using the "gather" environment in a style in which it is not defined.}
\let\csname endgather*\endcsname =\endtrivlist
\def\multiline{\@verbatim \frenchspacing\@vobeyspaces \@multilineverbatim
You are using the "multiline" environment in a style in which it is not defined.}
\let\csname endmultiline*\endcsname =\endtrivlist
\def\arrax{\@verbatim \frenchspacing\@vobeyspaces \@arraxverbatim
You are using a type of "array" construct that is only allowed in AmS-LaTeX.}
\def\tabulax{\@verbatim \frenchspacing\@vobeyspaces \@tabulaxverbatim
You are using a type of "tabular" construct that is only allowed in AmS-LaTeX.}
\let\csname endarrax*\endcsname =\endtrivlist
\let\csname endtabulax*\endcsname =\endtrivlist
 \def\endequation{%
     \ifmmode\ifinner 
      \iftag@
        \addtocounter{equation}{-1} 
        $\hfil
           \displaywidth\linewidth\@taggnum\egroup \endtrivlist
        \global\tag@false
        \global\@ignoretrue   
      \else
        $\hfil
           \displaywidth\linewidth\@eqnnum\egroup \endtrivlist
        \global\tag@false
        \global\@ignoretrue 
      \fi
     \else   
      \iftag@
        \addtocounter{equation}{-1} 
        \eqno \hbox{\@taggnum}
        \global\tag@false%
        $$\global\@ignoretrue
      \else
        \eqno \hbox{\@eqnnum}
        $$\global\@ignoretrue
      \fi
     \fi\fi
 } 
 \newif\iftag@ \tag@false
 \def\TCItag{\@ifnextchar*{\@TCItagstar}{\@TCItag}}
 \def\@TCItag#1{%
     \global\tag@true
     \global\def\@taggnum{(#1)}%
     \global\def\@currentlabel{#1}}
 \def\@TCItagstar*#1{%
     \global\tag@true
     \global\def\@taggnum{#1}%
     \global\def\@currentlabel{#1}}
     \def\tag{\@ifnextchar*{\@tagstar}{\@tag}}
     \def\@tag#1{%
         \global\tag@true
         \global\def\@taggnum{(#1)}}
     \def\@tagstar*#1{%
         \global\tag@true
         \global\def\@taggnum{#1}}
\def\qed{\hfill$\square$\par}
\def\Qcb#1{#1}
\def\FRAME#1#2#3#4#5#6#7#8
\begin{document}

\title{BGD domains in p.c.f. self-similar sets I: boundary value problems for harmonic functions}

\pagestyle{plain}

\author {Qingsong Gu}
\address{School of Mathematics\\ Nanjing University, Nanjing, 210093, China}
\email{qingsonggu@nju.edu.cn}
{\author {Hua Qiu}}
\address {School of Mathematics\\ Nanjing University, Nanjing, 210093, China}
\email {huaqiu@nju.edu.cn}

\subjclass[2010]{Primary 28A80; Secondary 31E05}
\keywords{p.c.f. self-similar set; Dirichlet form; hitting probability; graph-directed set; energy estimate}
\thanks {The first author was supported by National Natural Science Foundation of China (Grant Nos. 12101303 and 12171354); the second author was support by National Natural Science Foundation of China (Grant No. 12471087).}

\maketitle
\begin{abstract}  {We study the boundary value problems for harmonic functions on open connected subsets of post-critically finite (p.c.f.) self-similar sets, on which the Laplacian is defined through a strongly recurrent self-similar local regular Dirichlet form. For a p.c.f. self-similar set $K$, we prove that for any  open connected subset $\Omega\subset K$ whose ``geometric'' boundary  is a graph-directed self-similar set, there exists a finite number of matrices called \textit{flux transfer matrices} whose products generate the hitting probability from a point in $\Omega$ to the ``resistance'' boundary $\partial \Omega$. The harmonic functions on $\Omega$ can be expressed by integrating functions on $\partial \Omega$ against the probability measures. Furthermore, we obtain a two-sided estimate of the energy of a harmonic function  in terms of its values on  $\partial \Omega$.}
\end{abstract}

\maketitle
\section{\bf Introduction}\label{sec1}
\setcounter{equation}{0}\setcounter{theorem}{0}

Let $\Omega$ be a smooth domain in $\mathbb R^n$ and $\Delta=\sum_{i=1}^{n}\frac{\partial^2}{\partial x_i^2}$ be the Laplace operator. It is known that the Dirichlet problem
\begin{equation}
\left\{
  \begin{array}{ll}
    \Delta u=0  & \hbox{in $\Omega$}, \\
    u=f  & \hbox{on $\partial \Omega$},
  \end{array}
\right.
\end{equation}
has a unique solution $u$ for any continuous function $f$ on the boundary. In particular, if $\Omega$ is the open unit ball $B=\{x\in\mathbb R^n:\ |x|<1\}$, $u$ has an expression as the Poisson integral
\begin{equation}
u(x)=\int_{|y|=1}f(y)P(x,y)d\sigma(y),
\end{equation}
where $d\sigma$ is the normalized surface measure on the unit sphere and $P(x,y)=\frac{1-|x|^2}{|x-y|^{n}}$ is the Poisson kernel. From the probabilistic point of view, the measure $P(x,y)d\sigma(y)$ represents the hitting probability of the Brownian motion from $x$ in $B$ to the sphere.

On fractals,  a local regular Dirichlet form plays the role of the Dirichlet integral $\int_{\Omega} |\nabla u|^2dx$ in a domain $\Omega$ of $\mathbb R^n$, and it has an infinitesimal generator $\Delta$ called  the Laplacian. The construction of Dirichlet forms on fractals is motivated by the study of Brownian motions on self-similar sets in a probabilistic approach, with pioneering works of Kusuoka \cite{Kus}, Goldstein \cite{G} and Barlow-Perkins \cite{BP} on the Sierpinski gasket and of Lindstr\o m \cite{L} on nested fractals, and also of Barlow-Bass \cite{BB} and Kusuoka-Zhou \cite{KZ} on the Sierpinski carpet.
There is also a large literature on the topic based on Kigami's analytic approach on the post-critically finite (p.c.f.) self-similar sets (see \cite{B, FS, HMT, K1, K2, K,Me,Pe,Sa, S} and the references therein).

Specifically, let $K$ be a self-similar set generated by an iterated function system $\{F_i\}_{i=1}^N$ on a complete metric space.
Most of the previous studies are about the Dirichlet forms $(\mathcal E,\mathcal F)$ satisfying the energy self-similar identity, which means that there exist $N$ positive real numbers $\{ r_i\}_{i=1}^{N}$ called {\it energy renormalizing factors} such that for any function $u\in\mathcal F$, it holds that $u\circ F_i\in\mathcal F$ for any $i=1,\ldots,N$, and
\begin{equation*}
\mathcal E[u]=\sum_{i=1}^N\frac1{r_i}\mathcal E[u\circ F_i],
\end{equation*}
where $\mathcal{E}[u]:=\mathcal{E}(u,u)$. If further $r_i\in(0,1)$ for each $1\leq i\leq N$, then the form is strongly recurrent. Such forms are known to exist on some classes of self-similar sets having certain symmetry properties, for example, nested fractals \cite{L, Sa}, affine nested fractals \cite{FHK}, and Sierpinski carpets \cite{BB,KZ}.

\bigskip

For a given p.c.f. self-similar set $K$ equipped with a strongly recurrent self-similar Dirichlet form, we are concerned with the boundary value problems for harmonic functions on a domain $\Omega$ in $K$ (which means $\Omega$ is a nonempty open connected subset of $K$). We mainly focus on two problems originated from classical analysis: one is to find the exact description of the hitting probability from a point in $\Omega$ to the boundary; the other is to estimate the energy of a harmonic function generated by its boundary values. From the analytic point of view, we should regard $\Omega$ as a resistance space; see the work of Kigami and Takahashi \cite{KT22} on a particular $\Omega$, the Sierpinski gasket (SG) minus its bottom line. This leads us to introduce the topology given by the resistance metric to replace the underlying topology inherited from $K$. So in our investigation, we need to discriminate between two different boundaries of $\Omega$. We call them ``resistance" boundary and ``geometric" boundary later.


\begin{figure}[h]
	\includegraphics[width=7cm]{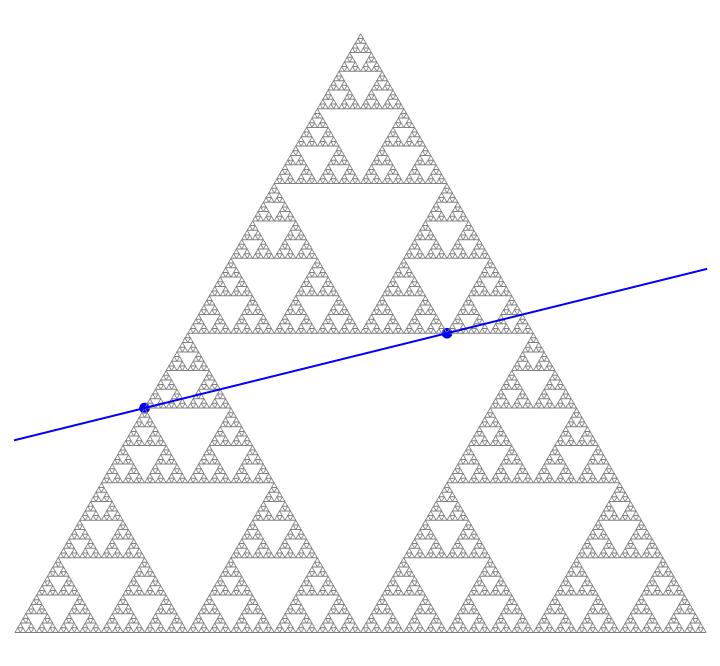}
	\begin{picture}(0,0)
	\end{picture}	
	\caption{domains in the Sierpinski gasket} \label{figureSGcut}
\end{figure}

The study of such problems was initiated in \cite{OS,GKQS,KT22} for typical domains in SG, see also \cite{LS,CQ} for extensions in more generalized SGs. However, the techniques strongly depend on the specific structure of SG and the geometric structure of the domain. For a general p.c.f. self-similar set $K$, due to its  self-similarity, it is natural to consider domains whose geometric boundaries are {\it graph-directed self-similar sets}, for example, domains in SG generated by ``cutting" with an oblique line (see Figure \ref{figureSGcut} and Subsection \ref{subsec7.1}).  Another example is a family of domains in Lindstr\o m's snowflake whose boundaries are Koch curves (see Figure \ref{figure1} with boundaries drawn in thick lines).


\begin{figure}[h]
	\includegraphics[width=4.5cm]{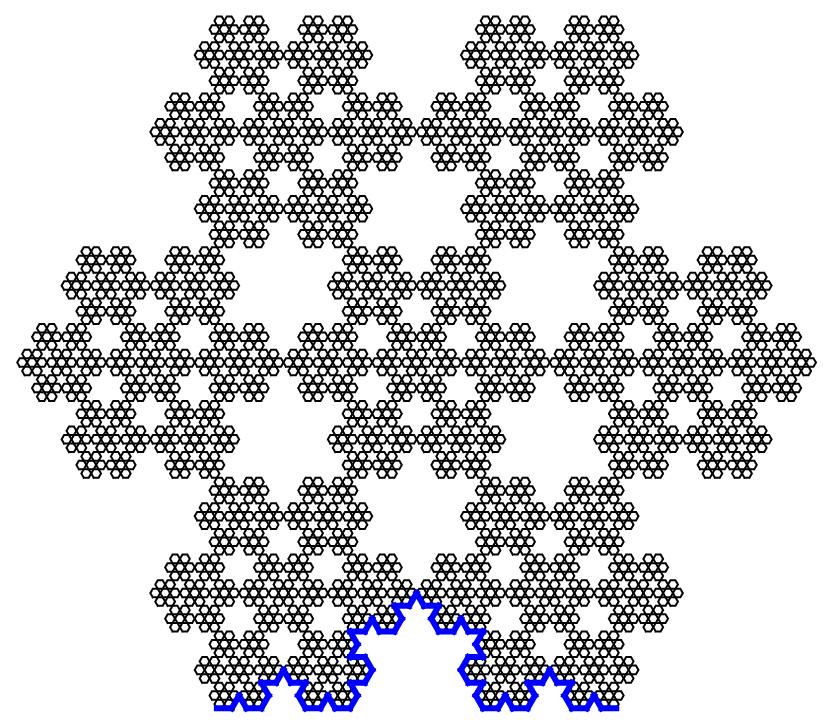}
			\includegraphics[width=4.5cm]{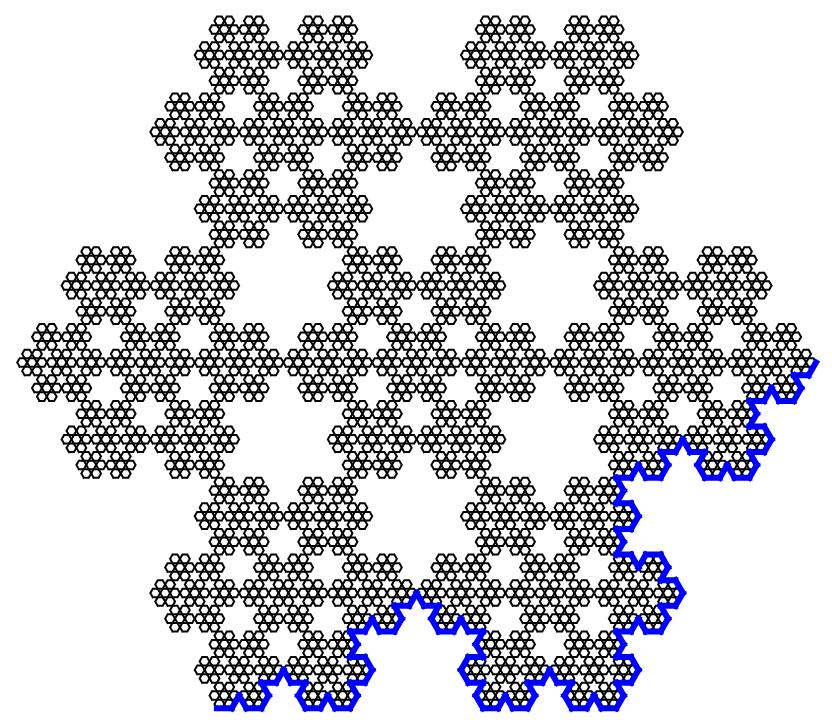}
				\includegraphics[width=4.5cm]{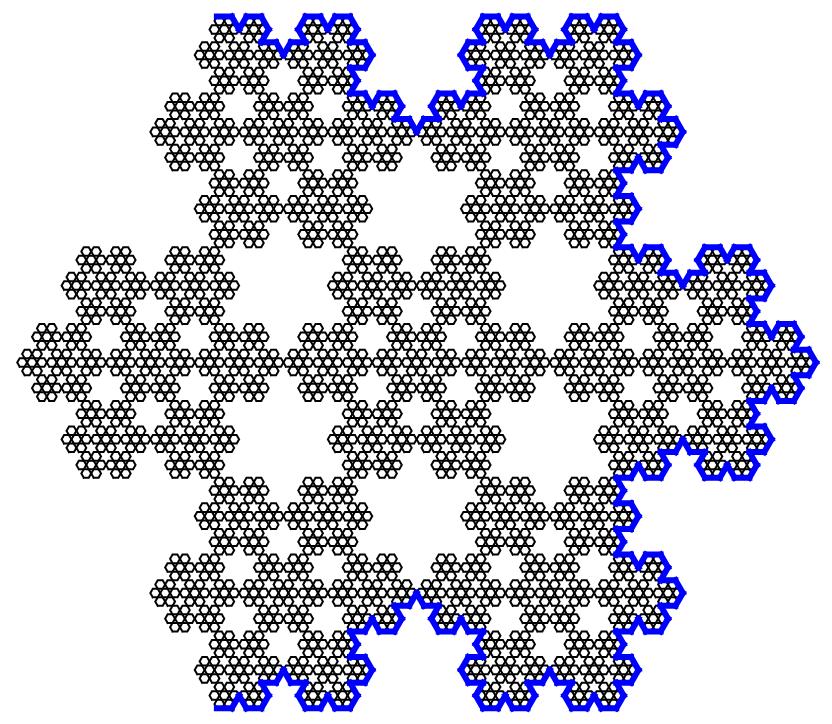}
	\caption{Domains in Lindstr\o m's snowflake} \label{figure1}
\end{figure}

\medskip

In this paper, we propose a condition called \textit{boundary graph-directed condition} (BGD for short) for a finite collection of domains $\Omega_i$, $1\leq i\leq P$ with geometric boundaries $D_i$:

\medskip

\noindent(BGD):\quad \textit{for $1\leq i\leq P$ and $1\leq k\leq N$, if $\Omega_i\cap F_k(K)\neq\emptyset$ and $D_i\cap F_k(K)\neq\emptyset$, then there exists $1\leq j\leq P$ such that
\begin{equation}\label{graphdirected}
\Omega_i\cap F_k(K)=F_k(\Omega_j),\qquad D_i\cap F_k(K)=F_k(D_j).
\end{equation}}

Under this condition, we are able to solve the boundary value problems for domains in general p.c.f. self-similar sets.

Firstly, to determine the hitting probability from a  point in a domain $\Omega$ to its resistance boundary $\partial \Omega$, we introduce a finite number of matrices, called {\it flux transfer matrices}, and prove that the products of these matrices generate the hitting probability (see Theorem \ref{th3.1}). We note that these matrices are determined not only by the resistance form on the fractal but also by the graph-directed structure of the domain.

Secondly, using the hitting probability measures, we establish an equivalent characterization of energies of harmonic functions through their boundary values (see Theorem \ref{enes}). 
We remark that a closely related problem is to consider the trace of functions with finite energy on a self-similar set to its subsets. In \cite{HiK}, Hino and Kumagai proved a trace theorem for these functions on a self-similar set to its self-similar subsets, extending the result of Jonsson \cite{J} for the trace of functions on SG to its bottom line.
\medskip

We organize the paper as follows. In Section \ref{sec2}, we give some preliminaries for strongly recurrent self-similar Dirichlet forms on p.c.f. self-similar sets and recall some basic properties of electric networks. In Section \ref{sec3}, we give several basic properties of the boundary graph-directed (BGD) condition to describe the geometric boundary of a domain in self-similar sets. In Section \ref{sec4}, for a domain satisfying BGD, we study its resistance boundary in terms of resistance forms and characterize this boundary as a symbolic space.
In Section \ref{sec5}, by introducing the flux transfer matrices, we prove Theorem \ref{th3.1} on the expression of hitting probabilities. In Section \ref{sec6}, we prove Theorem \ref{enes} on the energy estimate of harmonic functions in terms of their boundary values. Finally in Section \ref{sec7}, we present several examples.

Throughout the paper, we use the notation $f\lesssim(\gtrsim) g$ for two variables $f$ and $g$ if there exists a constant $C>0$ such that $f\leq(\geq) C g$, and also $f\asymp g$ if both $f\lesssim g$ and $f\gtrsim g$ hold. For a set $A$, we write $\ell(A)$ for the collection of real valued functions on $A$.

In a subsequent paper \cite{GQ}, we investigate the Weyl's law for the asymptotics of the eigenvalues of Laplacians on BGD domains. Especially, we mainly focus on the second-order remainder estimates which depends on the geometry of the boundaries.   

\section{\bf Preliminaries}\label{sec2}
\setcounter{equation}{0}\setcounter{theorem}{0}

We first recall some notations about post-critically finite (p.c.f. for short) self-similar sets introduced by Kigami \cite{K2,K}. Let $N\geq 2$ be an integer, $\{F_i\}_{i=1}^N$ be a finite collection of injective contractions on a complete metric space $(X,d)$. The self-similar set associated with the iterated function system (IFS) $\{F_i\}_{i=1}^N$ is the unique nonempty compact set $K\subset X$ satisfying $$K=\bigcup_{i=1}^N F_i(K).$$ We define the symbolic space as usual. Let $\Sigma=\{1,\ldots,N\}$ be the {\it alphabets}, $\Sigma^n$ the set of {\it words} of length $n$ (where $\Sigma^0=\{\emptyset\}$ containing only the empty word), and $\Sigma^\infty$ the set of {\it infinite words} $\omega=\omega_1\omega_2\cdots$. For $\omega=\omega_1\cdots\omega_n\in\Sigma^n$, we write $|\omega|=n$, $F_{\omega}=F_{\omega_1}\circ \cdots\circ F_{\omega_n}$ and call $F_\omega(K)$ an {\it $n$-cell}  ($F_\emptyset=\rm{Id}$). Let $\pi: \Sigma^\infty\rightarrow K$ be defined by $\{x\}=\{\pi(\omega)\}=\bigcap_{n\geq1}F_{[\omega]_n}(K)$ with $[\omega]_n=\omega_1\cdots\omega_n$, a symbolic representation of $x\in K$ by $\omega$.

Following \cite{K}, we define the critical set $\mathcal C$ and post-critical set $\mathcal P$ for $K$ by
\begin{equation*}
\mathcal C=\pi^{-1}\left(\bigcup_{1\leq i<j\leq N}\left(F_i(K)\cap F_j(K)\right)\right),\qquad \mathcal P=\bigcup_{m\geq1}\sigma^m(\mathcal C),
\end{equation*}
where  $\sigma:\ \Sigma^\infty\rightarrow\Sigma^\infty$ is the left shift operator, i.e. $\sigma(\omega_1\omega_2\cdots)=\omega_2\omega_3\cdots$. If $\mathcal P$ is finite, we call $\{F_i\}_{i=1}^N$ a {\it post-critically finite (p.c.f.)} IFS, and $K$ a {\it p.c.f. self-similar set}. The boundary of $K$ is defined by $V_0=\pi(\mathcal P)$. We also inductively denote
\begin{equation*}
V_n=\bigcup_{i\in\Sigma}F_i(V_{n-1}),\qquad V_*=\bigcup_{n=0}^\infty V_n.
\end{equation*}
It is clear that $\{V_n\}_{n\geq0}$ is an increasing sequence of sets and $K$ is the closure of $V_*$ unless $V_0=\emptyset$. It is known that the metric space $(K,d)$ has a fundamental neighborhood system $\{K_{n,x}:\ n\geq0,x\in K\}$, where each $K_{n,x}=\bigcup\limits_{\omega\in \Sigma^n: x\in F_\omega(K)}F_{\omega}(K)$, see \cite[Proposition 1.3.6]{K}. We always assume $\{F_i\}_{i=1}^N$ is p.c.f. and that $(K,d)$ is connected.

\medskip

Our basic assumption on a p.c.f. self-similar set $K$ is the existence of a {\it regular harmonic structure} $(D,{\bf r})$. Denote $Q=\# V_0$. Let ${\bf r}=(r_1,\ldots, r_N)\in (0,\infty)^N$ and $D=(D_{pq})_{p,q\in V_0}$ be a $Q\times Q$ real symmetric matrix such that

(1). for $u\in\ell (V_0)$, $Du=0$ if and only if $u$ is a constant function;

(2). $D_{pq}\geq0$ for any $p,q\in V_0$ with $p\neq q$.

For $u\in\ell (V_0)$, define $\mathcal E_0[u]=-\sum_{p,q\in V_0}D_{p,q}u(p)u(q)$ and
 \begin{equation*}
 \mathcal E_n[u]=\sum_{\omega\in \Sigma^n}\frac{1}{r_\omega}\mathcal E_0[u\circ F_{\omega}|_{V_0}],\qquad u\in\ell(V_n),
 \end{equation*}
 where $n\geq1$ and $r_\omega=r_{\omega_1}\cdots r_{\omega_n}$ for $\omega=\omega_1\cdots\omega_n$ ($r_{\emptyset}=1$).

We say that $(D,{\bf r})$ is a harmonic structure on $(K,\{F_i\}_{i=1}^N,V_0)$ if the following compatible condition holds:
 \begin{equation*}
 \mathcal E_0[u]=\inf_{v\in \ell(V_1),v|_{V_0}=u}\mathcal E_1[v],\qquad \text{ for any } u\in\ell(V_0).
 \end{equation*}
 Moreover, if ${\bf r}\in (0,1)^N$, we call the harmonic structure regular.
Under this condition, $\mathcal E_n[u]$ is an increasing sequence of $n$ and hence for $u\in C(K)$, the space of all continuous functions on $K$, we can define
 \begin{equation*}
 \mathcal E[u]=\lim_{n\rightarrow\infty}\mathcal E_n[u|_{V_n}].
 \end{equation*}
Let $\mathcal F=\{u\in C(K):\ \mathcal E[u]<\infty \}$.

This defines a strongly recurrent self-similar resistance form $(\mathcal E,\mathcal F)$:
\begin{equation}\label{ess}
\mathcal E[u]=\sum_{i=1}^N\frac{1}{r_i}\mathcal E[u\circ F_i],\qquad u\in\mathcal F,
\end{equation}
where $0<r_i<1,i=1,\ldots,N$ are called \textit{energy renormalizing factors}.  By iterating \eqref{ess}, we see that for any $n\geq1$,
\begin{equation}\label{essn}
\mathcal E[u]=\sum_{|\omega|=n}\frac{1}{r_\omega}\mathcal E[u\circ F_\omega],\qquad u\in\mathcal F,
\end{equation}

We call $\mathcal E_{F_\omega(K)}[u]:=\frac{1}{r_\omega}\mathcal E[u\circ F_\omega]$ the {\it energy} of $u$ on the cell  $F_\omega(K)$.

\medskip

We say a function $h\in \mathcal F$ is {\it harmonic} in $K\setminus V_0$ if $$\mathcal E[h]=\inf\{\mathcal E[u]:\ u\in\mathcal F, u|_{V_0}=h|_{V_0}\}.$$
Let $A,B$ be two disjoint nonempty closed subsets of $K$. The {\it effective resistance} $R(A,B)$  between $A$ and $B$ is defined as
\begin{equation*}
R(A,B)^{-1}:=\inf\{\mathcal E[u]:\ u\in\mathcal F, u|_{A}=0,u|_{B}=1\}.
\end{equation*}
The infimum is attained by a unique function, which is harmonic in $K\setminus(A\cup B)$.
We write $R(x,B):=R(\{x\},B)$ and $R(x,y):=R(\{x\},\{y\})$ when $x,y$ are single points.
When we only consider points, by setting $R(x,x)=0$ for all $x\in K$, the resistance $R(\cdot,\cdot)$ is known to be a metric on $K$, which is called the {\it effective resistance metric}. It is known that $R$ is compatible with the topology of $(K,d)$. In addition, $\text{diam}_R(F_{\omega}(K))\asymp r_{\omega}$ for any finite word $\omega$, where $\text{diam}_R(F_{\omega}(K))$ is the diameter of $F_{\omega}(K)$ under $R$.

For a Radon measure $\nu$ supported on $K$, the resistance form $(\mathcal E,\mathcal F)$ turns out to be a Dirichlet form on $L^2(K,\nu)$, which determines a Laplacian $\Delta_\nu$. See  \cite{K,Kig12,S} for details.

\medskip

The problem this paper concerns is generally the following. Assume $(\mathcal E,\mathcal F)$ is a strongly recurrent self-similar resistance form on a p.c.f. self-similar set $K$, and $\Omega$ is a nonempty open connected subset of $K$ with a nonempty boundary $D$. We consider the Dirichlet problem: to find solutions to
\begin{equation*}
\begin{cases}
\Delta u=0 &\text{ in }\Omega,\\
u|_D=f, & f\in C(D).
\end{cases}
\end{equation*}
It is known that the above problem has a unique solution, see for example \cite[Proposition 1.1]{CQ}.

\medskip

We then recall some basic facts in electric network theory. Let $G$ be a finite set, and let $g:
G\times G\rightarrow \mathbb R$ be a nonnegative function such that
\begin{equation*}
g(p,q)=g(q,p),\ g(p,p)=0,\quad  p,q\in G.
\end{equation*}
For $p,q\in G$, we write $p\sim q$ if $g(p,q)>0$, and say that $(G,g)$ is {\it connected} if for any $p,q\in G$ there is a path $p=p_0\sim p_1\sim\cdots\sim p_n=q$. We always assume that $(G,g)$ is connected, and call $(G,g)$ an \textit{electric network}.

For $u\in\ell(G)$, we define the {\it energy} of $u$ on $(G,g)$ to be
\begin{equation*}
\mathcal E_G[u]:=\frac{1}{2}\sum_{p,q\in G}g(p,q)(u(p)-u(q))^2.
\end{equation*}
By polarization, we can define $\mathcal E_G(u,v)=\frac14\left(\mathcal E_G[u+v]-\mathcal E_G[u-v]\right)$ for $u,v\in\ell(G)$.
Then $(\mathcal E_G,\ell(G))$ is a resistance form on $G$ \cite{K}.

For $u\in\ell(G)$, we define the {\it Neumann derivative} of $u$ ({\it flux} of $\nabla u$, the flow associated with $u$; see \cite{B1}) at some vertex $p\in G$ as
\begin{equation}\label{normald}
(du)_p=\sum_{q\in G}g(p,q)(u(p)-u(q)).
\end{equation}
Then clearly, for $u,v\in\ell(G)$,
\begin{equation}\label{DisGuass-Green}
\sum_{p\in G}v(p)(du)_p=\sum_{p\in G}u(p)(dv)_p,
\end{equation}
and in particular,
\begin{equation}\label{zerodiv}
\sum_{p\in G}(du)_p=0.
\end{equation}

For a resistance form $(\mathcal E,\mathcal F)$ on $K$, it is known that the {\it trace} of $\mathcal E$ to a nonempty finite set $V\subset K$ is an electric network $(V,g)$ determined by
\begin{equation*}
\frac12\sum_{p,q\in V}g(p,q)(u(p)-u(q))^2=\min\{\mathcal E[v]:\ v\in\mathcal F, v|_{V}=u\}, \quad u\in\ell(V),
\end{equation*}
while the unique function $v$ minimizing the right hand side is harmonic in $K\setminus V$. In the following, with a little abuse of notation, sometimes we write $(dv)_p$ instead of $(du)_p$ for $p\in V$.




\section{\bf Boundary graph-directed condition}\label{sec3}
\setcounter{equation}{0}\setcounter{theorem}{0}

In this section, for a p.c.f. self-similar set $K$, we will introduce a condition for domains in $K$, named as boundary graph-directed condition, that this paper will be concerned with throughout.

Recall that
graph-directed self-similar sets are generalizations of self-similar sets. Let $(\mathcal A,\Gamma)$ be a {\it directed graph} (allowing loops) with a finite set of {\it vertices} $\mathcal A=\{1,\ldots,P\}$ and a finite set of {\it directed edges} $\Gamma$. For any $\gamma\in\Gamma$, if $\gamma$ is a directed edge from $i$ to $j$ for some $i,j\in\mathcal A$, we set $I(\gamma)=i$ and $T(\gamma)=j$ and call them the {\it initial vertex} and {\it terminal vertex} of $\gamma$ respectively. For $i,j\in\mathcal A$, denote $\Gamma(i)=\{\gamma\in\Gamma:\ I(\gamma)=i\}$ and $\Gamma({i,j})=\{\gamma\in\Gamma: I(\gamma)=i,\ T(\gamma)=j\}$. We assume each $\Gamma(i)$ is nonempty and each  edge $\gamma$ is associated with a contraction $\Phi_{\gamma}$ on $(X,d)$. Then there exists a unique vector of nonempty compact sets $\{D_i\}_{i=1}^{P}$ in $(X,d)$, called \textit{graph-directed self-similar sets} \cite{MW}, satisfying
\begin{equation}\label{eqGD}
D_i=\bigcup_{j=1}^P\bigcup_{\gamma\in\Gamma(i,j)}\Phi_{\gamma}(D_j),\qquad  1\leq i\leq P.
\end{equation}

For $m\geq1$, a finite word ${\bf\gamma}=\gamma_1\gamma_2\cdots\gamma_m$ with $\gamma_i\in\Gamma$ for $ i=1,\ldots,m$ is called {\it admissible} if $T(\gamma_i)=I(\gamma_{i+1})$ for any $i=1,\ldots,m-1$; we set $|{\bf\gamma}|=m$, write $I({\bf\gamma})=I(\gamma_1)$, $T({\bf\gamma})=T(\gamma_m)$, and define $\Phi_{\bf\gamma}=\Phi_{\gamma_1}\circ\cdots\circ\Phi_{\gamma_m}$. We denote by $\Gamma_m$ the set of all admissible words with length $m$, and write $\Gamma_0=\{\emptyset\}$ containing only the empty word by convention. For $0\leq n\leq m$, we denote $[{\bf\gamma}]_n=\gamma_1\cdots\gamma_n$ the $n$-step truncation of ${\bf\gamma}$.  For $i\in\mathcal A$, we also denote $\Gamma_m(i)=\{{\bf\gamma}\in \Gamma_m:\ I({\bf\gamma})=i\}$ ($\Gamma_0(i)=\{\emptyset\}$).
Denote by $\Gamma_*=\bigcup_{m=0}^{\infty}\Gamma_m$ the set of all finite admissible words.

\medskip

We then apply the above definition to a particular situation, domains in p.c.f. self-similar sets.
Let $(K,\{F_i\}_{i=1}^N)$ be a p.c.f. self-similar set. For $P\geq1$, let $\{\Omega_1,\Omega_2,\ldots,\Omega_P\}$ be a vector of connected nonempty open subsets of $K$ such that each $\Omega_i$ has a nonempty boundary with respect to the metric $d$, which is denoted as $D_i$. Later we call $D_i$ the {\it geometric boundary} of $\Omega_i$. We assume that $\Omega_i\neq\Omega_j$ for any $1\leq i<j\leq P$ and $\{(\Omega_i,D_i)\}_{1\leq i\leq P}$ satisfy the following BGD condition (see also \eqref{graphdirected}):

\medskip

\noindent(BGD):\quad \textit{for $1\leq i\leq P$ and $1\leq k\leq N$, if $\Omega_i\cap F_k(K)\neq\emptyset$ and $D_i\cap F_k(K)\neq\emptyset$, then there exists $1\leq j\leq P$ such that
\begin{equation*}
\Omega_i\cap F_k(K)=F_k(\Omega_j),\qquad D_i\cap F_k(K)=F_k(D_j).
\end{equation*}}

According to the configuration of $K$ and $\{\Omega_{i}\}_{i=1}^{P}$, we define the directed graph on $\mathcal{
A}=\{1,\ldots,P\}$ as follows. For each pair $(i,j)$ in the BGD condition, we set $\gamma$ to be a directed edge from $i$ to $j$ with the contraction map $\Phi_\gamma=F_k$. Denote by $\Gamma$ the set of all directed edges $\gamma$ between vertices in $\mathcal A$.
 In this way, we have a directed graph $(\mathcal A,\Gamma)$ and a set of contractions $\{\Phi_\gamma\}_{\gamma\in\Gamma}$ such that for each $\gamma$, there is some $k \in\{1,\ldots,N\}$ satisfying $\Phi_{\gamma}=F_k$.
Furthermore, noting that for any $1\leq i\leq P$ and any $x\in D_i$, there exists $k\in\{1,\ldots,N\}$ such that $x\in F_k(K)$ and $\Omega_i\cap F_k(K)\neq\emptyset$, we see that
$\{D_i\}_{i=1}^{P}$ satisfy the equations \eqref{eqGD} with these $\{\Phi_{\gamma}\}_{\gamma\in\Gamma}$, and hence, $\{D_i\}_{i=1}^{P}$ is a vector of graph-directed self-similar sets.

\medskip

\begin{proposition}\label{BGDprop}
Assume $\{\Omega_i\}_{i=1}^P$ satisfy the BGD condition.

(i). If $\Omega_i\cap V_0\neq\emptyset$, then $\Omega_j\cap V_0\neq\emptyset$ provided that $\Gamma({i,j})\neq\emptyset$;

(ii). There exists $n_0\geq1$ such that $\Omega_{T({\bf\gamma})}\cap V_0\neq\emptyset$ for all $n\geq n_0$ and ${\bf\gamma}\in \Gamma_n$.
\end{proposition}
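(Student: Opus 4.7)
My plan is to handle the two parts in turn, reducing both to the connectedness of $\Omega_i$ together with two standard p.c.f. facts: $V_0 \cap F_k(K) \subset F_k(V_0)$ and $F_k(K) \cap F_l(K) \subset F_k(V_0)$ for $l \neq k$ (both direct consequences of the definitions of $\mathcal{C}$ and $\mathcal{P}$).

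For part (i), fix an edge $\gamma \in \Gamma(i,j)$ realized by $\Phi_\gamma = F_k$, so BGD gives $\Omega_i \cap F_k(K) = F_k(\Omega_j)$. Since $F_k$ is injective, it suffices to exhibit a point of $\Omega_i \cap F_k(V_0)$, whose preimage under $F_k$ will then lie in $\Omega_j \cap V_0$. Take $p \in \Omega_i \cap V_0$. If $p \in F_k(K)$, then $p \in V_0 \cap F_k(K) \subset F_k(V_0)$ by the first p.c.f. fact and we are done. Otherwise $p$ certifies that $\Omega_i \setminus F_k(K) \neq \emptyset$, while $\Omega_i \cap F_k(K)$ is nonempty and closed in $\Omega_i$; since $\Omega_i$ is connected, this set cannot also be open in $\Omega_i$, so it contains a point $p'$ that is a limit of points of $K \setminus F_k(K) \subset \bigcup_{l \neq k} F_l(K)$. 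The second p.c.f. fact then places $p'$ in $F_k(V_0) \cap \Omega_i$.

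For part (ii), I will combine (i) with a rigidity argument. By the contrapositive of (i), the set $\mathcal{G} := \{i : \Omega_i \cap V_0 \neq \emptyset\}$ is forward-closed under the edges of $\Gamma$, so once an admissible word lands in $\mathcal{G}$ it stays there. It therefore suffices to show that the induced subgraph on $\mathcal{B} := \{1,\ldots,P\}\setminus\mathcal{G}$ is acyclic; then for every $\gamma \in \Gamma_n$ with $n \geq P$, the pigeonhole principle forces the path to visit some vertex in $\mathcal{G}$, after which forward-closure ensures $T(\gamma) \in \mathcal{G}$, so $n_0 = P$ works.

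Suppose, toward contradiction, that there is a cycle $i_1 \to i_2 \to \cdots \to i_m \to i_1$ inside $\mathcal{B}$, realized by maps $F_{k_1}, \ldots, F_{k_m}$. For each edge we face a dichotomy: either $\Omega_{i_l} \not\subset F_{k_l}(K)$, or $\Omega_{i_l} \subset F_{k_l}(K)$. In the first alternative, the connectedness argument from (i) produces a point of $\Omega_{i_l} \cap F_{k_l}(V_0)$, which by BGD translates to a point of $\Omega_{i_{l+1}} \cap V_0$---contradicting $i_{l+1} \in \mathcal{B}$. So every edge must satisfy the second alternative, and BGD simplifies to $\Omega_{i_l} = F_{k_l}(\Omega_{i_{l+1}})$. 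Composing around the cycle yields $\Omega_{i_1} = F(\Omega_{i_1})$ with $F := F_{k_1} \circ \cdots \circ F_{k_m}$ a strict contraction on $K$. Iterating and using that $F^n(K)$ shrinks to its unique fixed point forces $\mathrm{diam}(\Omega_{i_1}) = 0$, contradicting that $\Omega_{i_1}$ is a nonempty open subset of a p.c.f. self-similar set with no isolated points.

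The main obstacle I anticipate is in part (ii): forward-closure of $\mathcal{G}$ alone is too weak to preclude bad paths of unbounded length, so the argument must produce a rigid obstruction along any hypothetical cycle in $\mathcal{B}$. The dichotomy above is the crux---``straddling'' a cell yields a good successor via connectedness, while being ``nested'' in a cell propagates the entire domain by a contraction---and the no-isolated-points rigidity of $K$ then closes the argument.
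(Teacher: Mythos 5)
Your proposal is correct. Part (i) is essentially the paper's argument: the paper splits into the cases $\Phi_\gamma(\Omega_j)=\Omega_i$ versus $\Phi_\gamma(\Omega_j)\subsetneq\Omega_i$ and in the second case uses connectedness of $\Omega_i$ to forbid $\Phi_\gamma(\Omega_j)$ from being clopen, which is the same mechanism as your ``closed but not open, hence has a boundary point in $F_k(V_0)$'' step; your version just extracts the witness point directly rather than deriving a contradiction. For part (ii) you take a genuinely different route. The paper simply chooses $n_0$ so large that $\Phi_{\bf\gamma}(\Omega_{T({\bf\gamma})})\subsetneq\Omega_{I({\bf\gamma})}$ for every ${\bf\gamma}\in\Gamma_{n_0}$ (possible because $\mathrm{diam}\,\Phi_{\bf\gamma}(K)\to 0$ uniformly while each $\Omega_i$ has fixed positive diameter), at which point the ``straddling'' alternative is automatic and the Case 2 connectedness argument applies verbatim to the word ${\bf\gamma}$. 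You instead avoid any appeal to uniform shrinking of cells: you use forward-closure of $\mathcal G=\{i:\Omega_i\cap V_0\neq\emptyset\}$ from (i), reduce to acyclicity of the complement, and rule out a cycle in $\mathcal B$ by the nested/straddling dichotomy, with the nested case collapsing $\Omega_{i_1}$ to the fixed point of a strict contraction --- a contradiction since a singleton open set would have empty $d$-boundary, while $D_{i_1}\neq\emptyset$ by hypothesis. Both arguments are sound; yours buys an explicit bound $n_0=P$ and isolates the graph-theoretic content, at the cost of being longer, whereas the paper's diameter argument disposes of (ii) in one line once Case 2 of (i) is in place.
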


\begin{proof}
(i). Assume $\Omega_i\cap V_0\neq\emptyset$ and $\gamma\in \Gamma({i,j})$. We consider two possible cases to achieve $\Omega_j\cap V_0\neq \emptyset$.

{\it Case 1. $\Phi_\gamma(\Omega_j)=\Omega_i$.} By $\Omega_i\cap V_0\neq\emptyset$, we can find some $p_k\in\Phi_\gamma(\Omega_j)\cap V_0\subset \Phi_\gamma(V_0)$, where the latter inclusion is implied by $V_0\subset V_1$ and \cite[Proposition 1.3.5(2)]{K}. This implies that $\Phi_\gamma(\Omega_j\cap V_0)\neq\emptyset$ and hence $\Omega_j \cap V_0\neq\emptyset$.

{\it Case 2. $\Phi_\gamma(\Omega_j)\subsetneq\Omega_i$.}  If $\Omega_j\cap V_0=\emptyset$, we must have $\Omega_i\cap\Phi_\gamma(V_0)=\Omega_i\cap  \Phi_\gamma(K)\cap\Phi_\gamma(V_0)=\Phi_\gamma(\Omega_j)\cap \Phi_\gamma(V_0)=\emptyset$, where we used the BGD condition in the second equality. Then since $\Phi_{\gamma}(K\setminus V_0)$ is open in $K$ by \cite[Proposition 1.3.5(2)]{K}, $\Phi_\gamma(\Omega_j)$ and $\Omega_i\setminus\Phi_\gamma(\Omega_j)$ are two nonempty open subsets of $\Omega_i$, which contradicts the connectedness of $\Omega_i$.

(ii). We pick $n\geq1$ sufficiently large such that $\Phi_{\bf\gamma}(\Omega_{T({\bf\gamma})})\subsetneq\Omega_i$ for all $1\leq i\leq P$ and ${\bf\gamma}\in\Gamma_n(i)$. Then the proof is similar to that of Case 2 in (i).
\end{proof}

\begin{proposition}\label{connectedProp}
Assume $\{\Omega_i\}_{i=1}^P$ satisfy the BGD condition. Then each $\Omega_i$ is arcwise connected.
\end{proposition}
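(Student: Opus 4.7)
The plan is to establish that $\Omega_i$ is locally arcwise connected and then combine this with the standing hypothesis that $\Omega_i$ is connected via the usual topological argument. The BGD condition does not appear essential for the conclusion, since the argument uses only that $\Omega_i$ is an open connected subset of the locally arcwise connected space $K$.

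First I would invoke the fact that a connected p.c.f.\ self-similar set admitting a regular harmonic structure is arcwise connected (a standard result from \cite{K}). Consequently every $n$-cell $F_{\omega}(K)$ is arcwise connected, being the continuous image of $K$ under $F_{\omega}$. Then for each $y \in K$ and each $n \geq 0$, the set $K_{n,y} = \bigcup_{\omega \in \Sigma^{n},\, y \in F_{\omega}(K)} F_{\omega}(K)$ is a finite union of arcwise connected sets sharing the common point $y$, so $K_{n,y}$ itself is arcwise connected.

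Given any $y \in \Omega_i$, since $\Omega_i$ is open in $(K,d)$ and $\{K_{n,y}\}_{n\ge 0}$ is a fundamental neighborhood system at $y$ (by \cite[Proposition 1.3.6]{K}, as recalled in Section \ref{sec2}), I can choose $n$ large enough that $K_{n,y} \subset \Omega_i$. This exhibits an arcwise connected neighborhood of $y$ in $\Omega_i$, so $\Omega_i$ is locally arcwise connected. Finally, letting $U \subset \Omega_i$ denote the set of points joinable to a fixed $x_0 \in \Omega_i$ by a path inside $\Omega_i$, local arcwise connectedness forces $U$ to be both open and closed in $\Omega_i$, and connectedness of $\Omega_i$ yields $U = \Omega_i$.

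The only real obstacle is the input that $K$ itself is arcwise connected; once that is granted, the remainder is a routine topological argument and the BGD structure plays no role. If the authors prefer a proof that genuinely uses BGD, an alternative would be to induct on cell depth: for cells $F_k(K)$ meeting $\Omega_i$ but not $D_i$ one has $F_k(K)\subset \Omega_i$ by connectedness of $F_k(K)$, while for cells with $\Omega_i\cap F_k(K)\neq\emptyset$ and $D_i\cap F_k(K)\neq\emptyset$ the identity $\Omega_i\cap F_k(K)=F_k(\Omega_j)$ from BGD allows one to recurse into a scaled copy of some $\Omega_j$, then concatenate paths across neighboring cells sharing a point in $V_*$; however the direct topological route above seems strictly simpler.
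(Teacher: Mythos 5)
Your proof is correct and follows essentially the same route as the paper: arcwise connectedness of $K$ and of each cell $F_\omega(K)$ gives local arcwise connectedness of the open set $\Omega_i$, and then connectedness forces a single arcwise connected component. The paper likewise makes no use of the BGD condition here; your proposal simply spells out the local arcwise connectedness step (via the neighborhoods $K_{n,y}$) in more detail than the paper does.
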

\begin{proof}
By \cite[Theorem 1.6.2]{K}, the connectedness of $K$ implies that $K$ and any cell $F_{\omega}(K)$ are arcwise connected. Hence each open set $\Omega_i$ is locally arcwise connected, and so each arcwise connected component of $\Omega_i$ is open.
Since $\Omega_i$ is connected, $\Omega_i$ has only one arcwise connected component,
so that $\Omega_i$ is arcwise connected.
\end{proof}

We will also use the notation of {\it infinite admissible words} ${\bf\gamma}=\gamma_1\gamma_2\cdots$ with $T(\gamma_i)=I(\gamma_{i+1})$ for all $i\geq1$. We denote by $\Gamma_{\infty}$ the collection of all infinite admissible words and $\Gamma_{\infty}(i)=\{{\bf\gamma}=\gamma_1\gamma_2\cdots\in \Gamma_{\infty}:\ I(\gamma_1)=i\}$ for $i=1,\ldots,P$.

For ${\bf\gamma}=\gamma_1\gamma_2\cdots,{\bf\eta}=\eta_1\eta_2\cdots\in\Gamma_{\infty}$  with ${\bf\gamma}\neq{\bf\eta}$, let ${\bf\gamma}\wedge{\bf\eta}$ be the common prefix of ${\bf\gamma}$ and ${\bf\eta}$, then
\begin{equation*}
|{\bf\gamma}\wedge{\bf\eta}|=\min\ \{i\geq1 :\ \gamma_i\neq\eta_i\}-1.
\end{equation*}
Define
\begin{equation*}
\rho({\bf\gamma},{\bf\eta})=\begin{cases}2^{-|{\bf\gamma}\wedge{\bf\eta}|}, &{\bf\gamma}\neq{\bf\eta},\\
                                                 0,& {\bf\gamma}={\bf\eta}.\end{cases}
\end{equation*}
Then by a routine argument, $\rho$ is a metric on $\Gamma_\infty$ and $(\Gamma_\infty,\rho)$ is a compact metric space.

For $i\in\{1,\ldots,P\}$, there is a natural surjective map $$\iota_i: \Gamma_{\infty}(i)\rightarrow D_i$$ given by
$\iota_i({\bf\gamma})=x$ with $\{x\}=\bigcap_{n\geq1}\Phi_{[{\bf\gamma}]_n}(K)$, where $[{\bf\gamma}]_n=\gamma_1\cdots\gamma_n$ is the $n$-step truncation of ${\bf\gamma}$. It is known that $\iota_i$ is continuous (see for example \cite[Theorem 1.2.3]{K}).

\section{\bf Resistance boundary and geometric boundary}\label{sec4}
\setcounter{equation}{0}\setcounter{theorem}{0}

In this section, we will discuss the relation of two ``boundaries" of a domain $\Omega\subset K$,  the geometric boundary and the resistance boundary. We will call them the {\it $d$-boundary} and {\it $R$-boundary} for short.

Let $\Omega$ be a domain in $K$. For a function $u\in C(\Omega)$, by considering $\Omega$ as a countable disjoint union of cells, we define {\it the energy of $u$ on $\Omega$} to be the summation of energies of $u$ on each of the cells and denote it as $\mathcal E_{\Omega}[u]$ (might equal to $+\infty$). By virtue of \eqref{essn}, we see that $\mathcal E_{\Omega}[u]$ does not depend on the partition of disjoint cells in $\Omega$. Denote $\mathcal F_\Omega=\{u\in C(\Omega):\ \mathcal E_\Omega[u]<\infty\}$. By polarization, we define $\mathcal E_{\Omega}(u,v)=\frac{1}{4}\left(\mathcal E_\Omega[u+v]-\mathcal E_\Omega[u-v]\right)$ for $u,v\in\mathcal F_{\Omega}$. It is direct to check that $(\mathcal E_{\Omega},\mathcal F_\Omega)$ is a resistance form on $\Omega$.

Define the {\it effective resistance metric} $R_{\Omega}(x,y)$ for two points $x,y$ in $\Omega$ with respect to $\mathcal E_{\Omega}$: for $x,y\in\Omega$ and $x\neq y$,
\begin{equation*}
R_{\Omega}(x,y)^{-1}:=\inf\{\mathcal E_{\Omega}[u]:\ u\in \mathcal F_\Omega, u(x)=0, u(y)=1\},
\end{equation*}
and $R(x,x)=0$ by convention.
Then $R_{\Omega}(\cdot,\cdot)$ is a metric on $\Omega$ \cite{K}. Let $\widetilde\Omega$ be the completion of $\Omega$ under $R_{\Omega}$, and denote $\partial \Omega=\widetilde\Omega\setminus \Omega$, the {\it $R$-boundary} of $\Omega$. Recall that there is another resistance metric $R(\cdot,\cdot)$ on $\Omega$ inherited from that on $K$.

\begin{lemma}\label{lemma3.0}
Let $A\subset \Omega$ be a nonempty compact subset of $(\Omega,d)$. Then there exists $C>1$ depending on $A$ such that
\begin{equation}\label{RxyRoxy}
R(x,y)\leq R_{\Omega}(x,y)\leq C R(x,y),\quad \forall x,y\in A.
\end{equation}
In particular, the identity map of $\Omega$ is a homeomorphism from $(\Omega,R_{\Omega})$ to $(\Omega,R)$ and $(\Omega,d)$.
\end{lemma}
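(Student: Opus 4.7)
The strategy is to compare energies on $\Omega$ and on $K$ via a bounded extension operator. For the left inequality, any $u\in\mathcal{F}$ with $u(x)=0$, $u(y)=1$ restricts to $u|_\Omega\in\mathcal{F}_\Omega$; writing $\mathcal{E}[u]$ as the sum of cell energies over a partition of $K$, the portion corresponding to cells in $\Omega$ gives $\mathcal{E}_\Omega[u|_\Omega]\leq \mathcal{E}[u]$, and taking the infimum over such $u$ yields $R(x,y)\leq R_\Omega(x,y)$.

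For the right inequality, the plan is to extend functions in $\mathcal{F}_\Omega$ to functions in $\mathcal{F}$ with controlled energy. Since $A$ is compact in the open set $\Omega$, one has $d(A,K\setminus\Omega)>0$, so for $n$ sufficiently large every $n$-cell $F_\omega(K)$ meeting $A$ lies inside $\Omega$. Using the arcwise connectedness of $\Omega$ (Proposition \ref{connectedProp}), I would first enlarge $A$ to a compact arcwise connected set $A'\subset\Omega$ by adjoining finitely many arcs, and then set $W:=\bigcup\{F_\omega(K):|\omega|=n,\ F_\omega(K)\cap A'\neq\emptyset\}$ for such an $n$. The resulting $W$ is a closed, \emph{connected}, finite union of cells with $A\subset W\subset\Omega$, and $B:=W\cap\overline{K\setminus W}$ is a finite set. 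Given $v\in\mathcal{F}_\Omega$ with $v(x)=0$, $v(y)=1$, I define $\tilde v$ on $K$ by $\tilde v=v$ on $W$ and by the harmonic extension of $v|_B$ on $\overline{K\setminus W^\circ}$; this $\tilde v$ is continuous, lies in $\mathcal{F}$, and satisfies $\tilde v(x)=0$, $\tilde v(y)=1$.

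The key energy estimate is
\begin{equation*}
\mathcal{E}[\tilde v]=\mathcal{E}_W[v|_W]+\mathcal{E}^{\mathrm{tr}}_{\overline{K\setminus W^\circ}}[v|_B]\leq (1+C)\,\mathcal{E}_\Omega[v],
\end{equation*}
where $\mathcal{E}^{\mathrm{tr}}_\star$ denotes the trace of the corresponding form on $B$. The first summand is at most $\mathcal{E}_\Omega[v]$ by cell-additivity. For the second, both $\mathcal{E}^{\mathrm{tr}}_W$ and $\mathcal{E}^{\mathrm{tr}}_{\overline{K\setminus W^\circ}}$ are nonnegative quadratic forms on the finite-dimensional space $\ell(B)$; since $W$ is connected, $\ker(\mathcal{E}^{\mathrm{tr}}_W)$ consists only of constants, which is contained in the kernel of any trace form. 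On $\ell(B)/\mathbb{R}$ the two forms are therefore comparable, giving $\mathcal{E}^{\mathrm{tr}}_{\overline{K\setminus W^\circ}}[\phi]\leq C\,\mathcal{E}^{\mathrm{tr}}_W[\phi]\leq C\,\mathcal{E}_W[v|_W]$ with a constant $C$ depending only on $W$, hence on $A$. Consequently $R(x,y)^{-1}\leq \mathcal{E}[\tilde v]\leq (1+C)\mathcal{E}_\Omega[v]$; taking the infimum over $v$ gives $R_\Omega(x,y)\leq(1+C)R(x,y)$. The homeomorphism statement then follows: the two-sided bound makes $R$ and $R_\Omega$ locally equivalent on $\Omega$, and $R$ is already compatible with $d$ on $K$.

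The main obstacle will be securing the connectedness of $W$, since this is precisely what guarantees the trace-kernel inclusion $\ker(\mathcal{E}^{\mathrm{tr}}_W)\subset\ker(\mathcal{E}^{\mathrm{tr}}_{\overline{K\setminus W^\circ}})$ required for the finite-dimensional norm comparison; if $W$ were disconnected, its trace kernel would strictly contain the constants and the bound we need would fail. This is why the intermediate enlargement step producing a connected $A'\subset\Omega$, using the arcwise connectedness of $\Omega$ established in Proposition \ref{connectedProp}, is essential.
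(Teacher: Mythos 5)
Your proof is correct, and while it shares the paper's overall skeleton --- dispose of the left inequality by restriction of competitors, then cover $A$ by a \emph{connected} finite union $W$ of $n$-cells inside $\Omega$ and extend a competitor from $\Omega$ to all of $K$ --- the key energy estimate is genuinely different. The paper extends the specific minimizer $u$ realizing $R_\Omega(x,y)^{-1}$ by setting it to $0$ on $V_n$ away from $W$ and harmonically inside the remaining $n$-cells; it then bounds the added energy by an absolute constant $C_2$ (using $0\le v\le 1$, hence the maximum principle and the normalization $u(x)=0$, $u(y)=1$) and bounds the energy inside $W$ from below by an absolute constant $C_1>0$ (since $v$ attains both $0$ and $1$ on the connected set $W$), so the total is at most $(1+C_2/C_1)$ times the energy carried by $W$. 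You instead extend an \emph{arbitrary} competitor by the harmonic extension of its values on the finite junction set $B=W\cap\overline{K\setminus W}$, and control the added energy by a finite-dimensional comparison of the two trace forms on $\ell(B)$, using that connectedness of $W$ forces $\ker\mathcal{E}^{\mathrm{tr}}_W$ to consist exactly of constants. Your route avoids the maximum principle and the normalization, gives a linear comparison of energies valid for every competitor rather than only the minimizer, and isolates cleanly why connectedness of $W$ is indispensable; the paper's route is more elementary in that it needs no trace forms, only crude two-sided constants. Two small points worth writing out if you polish this: every connected component of $\overline{K\setminus W}$ meets $B$ (otherwise $K$ would be disconnected), which is what makes your harmonic extension on the complement well defined; and the enlargement of $A$ to a connected $A'$ is most easily done by first taking the union of all $n$-cells meeting $A$ (each lies in $\Omega$ for $n$ large) and then adjoining finitely many arcs in $\Omega$ joining these finitely many cells, exactly as you indicate.
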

\begin{proof}
By definition,
\begin{align*}
R_\Omega(x,y)^{-1}&=\inf\{\mathcal E_{\Omega}[u]:\ u\in \mathcal F_\Omega, u(x)=0, u(y)=1\}\\
&\leq \inf\{\mathcal E[u]:\ u\in \mathcal F, u(x)=0, u(y)=1\}=R(x,y)^{-1},
\end{align*}
so we see that $R(x,y)\leq R_{\Omega}(x,y)$.

On the other hand, since $A$ is a compact subset of $\Omega$, fix an $n\geq1$ sufficiently large and a finite number of $n$-cells $\{F_{\omega^{(k)}}(K)\}_{k=1}^m$ such that
\begin{equation*}
A\subset \bigcup_{k=1}^m F_{\omega^{(k)}}(K)\subset \Omega.
\end{equation*}
We can also require that $\bigcup_{k=1}^mF_{\omega^{(k)}}(K)$ is connected by the (arcwise) connectedness of $\Omega$.

For any two points $x,y\in A$, we choose two $n$-cells (may be equal), say $F_{\omega}(K)$ and $F_{\omega'}(K)$, in $\{F_{\omega^{(k)}}(K)\}_{k=1}^m$ such that $x\in F_{\omega}(K)$ and $y\in F_{\omega'}(K)$. Let $u$ be the unique function in $\mathcal F_\Omega$ such that $\mathcal E_{\Omega}[u]=R_{\Omega}(x,y)^{-1}$ and $u(x)=0$, $u(y)=1$.  Define a function $v\in\mathcal F$ such that $v|_{F_{\omega^{(k)}}(K)}=u|_{F_{\omega^{(k)}}(K)}$ for each $1\leq k\leq m$, $v=0$ on $V_n\setminus \left({\bigcup_{k=1}^mF_{\omega^{(k)}}(V_0)}\right)$ and $v\circ F_\tau$ is harmonic in $K\setminus V_0$ for each $\tau\in \Sigma^n\setminus\{\omega^{(1)},\ldots,\omega^{(m)}\}$.
Then $v(x)=0$, $v(y)=1$ and
\begin{equation}\label{eqRxy}
R(x,y)^{-1}\leq \mathcal E[v]
=\sum_{k=1}^m\mathcal E_{F_{\omega^{(k)}}(K)}[u]+\sum_{\tau\in\Sigma^n\setminus\{\omega^{(1)},\ldots,\omega^{(m)}\}}\mathcal E_{F_{\tau}(K)}[v].
\end{equation}
Since $v$ attains values $0$ and $1$ in the cells $F_{\omega}(K)$ and $F_{\omega'}(K)$ respectively, and the union of cells $\bigcup_{k=1}^mF_{\omega^{(k)}}(K)$ is connected, we see that $\sum_{k=1}^m\mathcal E_{F_{\omega^{(k)}}(K)}[u]\geq C_1$ for some $C_1>0$ depending on $n$ and $\{r_i\}_{i=1}^N$. Also noting that $0\leq v\leq 1$ in each of the cells $F_{\tau}(K)$ for $\tau\in\Sigma^n\setminus\{\omega^{(1)},\ldots,\omega^{(m)}\}$, and $v$ is harmonic in each $F_{\tau}(K\setminus V_0)$, we obtain $\sum_{\tau\in\Sigma^n\setminus\{\omega^{(1)},\ldots,\omega^{(m)}\}}\mathcal E_{F_{\tau}(K)}[v]\leq C_2$ for some $C_2>0$ depending on $n$ and $\{r_i\}_{i=1}^N$. Hence the right-hand side of \eqref{eqRxy} is bounded from above by
\begin{equation}\label{eqROxy}
C\sum_{k=1}^m\mathcal E_{F_{\omega^{(k)}}(K)}[u]\leq C\mathcal E_{\Omega}[u]= CR_{\Omega}(x,y)^{-1},
\end{equation}
for some $C>1$ depending on $n$ and $\{r_i\}_{i=1}^N$.

Combining \eqref{eqRxy} and \eqref{eqROxy}, we obtain the second inequality of \eqref{RxyRoxy}.

From  \eqref{RxyRoxy}, we see that the identity map of $\Omega$ is a homeomorphism from $(\Omega,R_{\Omega})$ to $(\Omega,R)$ and so also to $(\Omega,d)$.
\end{proof}

Let $\{\Omega_i\}_{i=1}^P$ be a finite collection of domains in $K$ with $d$-boundaries $\{D_i\}_{i=1}^{P}$ satisfying the BGD condition.

We say a (finite or infinite) sequence of cells $\{F_{\omega^{(k)}}(K)\}_{k\geq1}$  {\it a chain of cells} if $F_{\omega^{(k)}}(K)\cap F_{\omega^{(k+1)}}(K)\neq\emptyset$ for all $k\geq1$.
For a finite chain of cells $\{F_{\omega^{(k)}}(K)\}_{k=1}^m$ with $x\in F_{\omega^{(1)}}(K)$ and $y\in F_{\omega^{(m)}}(K)$, we say it {\it connects $x$ and $y$}.

\begin{lemma}\label{connectedlemma}
There exists $n_1\geq1$ such that
for each $\Omega_i$ with $\Omega_i\cap V_1\neq\emptyset$ and $x,y\in \Omega_i\cap V_1$, there exists a chain of $n_1$-cells $\{F_{\omega^{(k)}}(K)\}_{k=1}^m$ in $\Omega_i$ connecting $x$ and $y$.
\end{lemma}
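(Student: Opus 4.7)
The plan is to reduce to finitely many pairs and, for each pair, use an arc in $\Omega_i$ together with a diameter-shrinking argument to produce a chain of $n$-cells along the arc. Since $V_1$ is finite and there are at most $P$ domains, the triples $(i,x,y)$ with $\Omega_i\cap V_1\neq\emptyset$ and $x,y\in\Omega_i\cap V_1$ form a finite collection; if I can produce some $n(i,x,y)\geq 1$ giving the desired chain for each of them, then setting $n_1:=\max n(i,x,y)$ will work for all of them simultaneously.

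Fix a triple $(i,x,y)$. By Proposition \ref{connectedProp}, $\Omega_i$ is arcwise connected, so I choose a continuous path $\alpha\colon[0,1]\to\Omega_i$ from $x$ to $y$. The image $\alpha([0,1])$ is compact in the open set $\Omega_i$, and since $D_i\subset K\setminus\Omega_i$ is nonempty, $\varepsilon:=d(\alpha([0,1]),K\setminus\Omega_i)>0$. Each $F_j$ is a contraction on $(X,d)$, so $\text{diam}_d F_\omega(K)\to 0$ uniformly in $\omega$ as $|\omega|\to\infty$; I pick $n$ large enough that every $n$-cell has $d$-diameter less than $\varepsilon$. Then any $n$-cell meeting $\alpha([0,1])$ is within $d$-distance $\varepsilon$ of $\alpha([0,1])$ and therefore lies entirely in $\Omega_i$. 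Let $\mathcal C$ denote the (finite) collection of these cells.

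The main step is to verify that $\mathcal C$ is connected in the intersection graph (vertices $=$ cells in $\mathcal C$; edges $=$ pairs of cells with nonempty intersection). Suppose instead that $\mathcal C=\mathcal C_1\sqcup\mathcal C_2$ with both parts nonempty and no cell of $\mathcal C_1$ meeting a cell of $\mathcal C_2$. Set $A_k:=\alpha^{-1}(\bigcup\mathcal C_k)$ for $k=1,2$. Each $A_k$ is closed (cells are closed and $\alpha$ is continuous), nonempty (since every cell of $\mathcal C_k$ meets $\alpha([0,1])$), and $A_1\cup A_2=[0,1]$. If some $t$ lay in $A_1\cap A_2$, then $\alpha(t)\in E_1\cap E_2$ for cells $E_k\in\mathcal C_k$, contradicting the disjointness assumption; hence $A_1\cap A_2=\emptyset$, contradicting the connectedness of $[0,1]$. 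Thus $\mathcal C$ is connected, and since some cell in $\mathcal C$ contains $x=\alpha(0)$ and some cell contains $y=\alpha(1)$, any graph path between them yields the required chain $\{F_{\omega^{(k)}}(K)\}_{k=1}^m$ of $n$-cells in $\Omega_i$ with $x\in F_{\omega^{(1)}}(K)$ and $y\in F_{\omega^{(m)}}(K)$. The sole conceptual obstacle is precisely the graph-connectedness step for $\mathcal C$; once that is handled, the uniform choice of $n_1$ follows by maximizing over the finitely many triples.
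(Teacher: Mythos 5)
Your proof is correct and follows essentially the same route as the paper's: arcwise connectedness of $\Omega_i$, a positive distance from the compact arc to $K\setminus\Omega_i$, $n$-cells of diameter below that distance along the arc, and finiteness of the triples $(i,x,y)$ to extract a uniform $n_1$. The only difference is that you spell out, via the connectedness of $[0,1]$, why the cells meeting the arc are connected in the intersection graph --- a step the paper asserts without detail --- so your write-up is a legitimate filling-in of the same argument.
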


\begin{proof}
By Proposition \ref{connectedProp}, each $\Omega_i$ is arcwise connected. Hence for any $x,y\in \Omega_i\cap V_1$, there exists a curve joining $x$ and $y$ in $\Omega_i$ (a continuous map  $f:\ [0,1]\rightarrow \Omega_i$ such that $f(0)=x$, $f(1)=y$). By that $\Omega_i$ is open in $K$, an $\varepsilon$-neighborhood of the curve is contained in $\Omega_i$, which gives a desired chain of $n$-cells for large $n$. Since the numbers of $\Omega_i$ and pairs $x,y\in\Omega_i\cap V_1$ are finite, we see that there exists a common $n_1$ as required.
\end{proof}

In the following, we write $r_{\max}=\max\{r_i:\ 1\leq i\leq N\}$ and $r_{\min}=\min\{r_i:\ 1\leq i\leq N\}$. 

\begin{proposition}\label{lemma4.2}
Each $(\Omega_i,R_{\Omega_i})$ is a bounded metric space.
\end{proposition}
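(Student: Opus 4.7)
The plan is to exploit the BGD structure together with $r_{\max}<1$ via a telescoping/geometric series argument. The main difficulty is that points $x\in\Omega_i$ have positive but non-uniform $d$-distance to $D_i$, so the BGD descent into sub-cells can reach arbitrary depth near the boundary; nevertheless each descent step contracts resistance by $r_{\max}<1$, which rescues us.

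First I treat the ``good'' subfamily $\widetilde{\mathcal A}:=\{j:V_0\cap\Omega_j\neq\emptyset\}$, which is forward-invariant under $(\mathcal A,\Gamma)$ by Proposition \ref{BGDprop}(i). Since $V_0\subset V_1$, Lemma \ref{connectedlemma} applies to each $\Omega_j$ with $j\in\widetilde{\mathcal A}$, and summing resistances along a chain of $n_1$-cells in $\Omega_j$ (each of $R$-diameter $\lesssim r_{\max}^{n_1}$, total length bounded by $N^{n_1}$) yields a uniform constant $L$ with $R_{\Omega_j}(p,q)\le L$ for every $j\in\widetilde{\mathcal A}$ and every $p,q\in V_1\cap\Omega_j$.

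Next, for $j_0\in\widetilde{\mathcal A}$ and $x\in\Omega_{j_0}$ I build a BGD descent: at step $m$, either the 1-cell $F_{k_{m+1}}(K)$ containing the current point $x_m\in\Omega_{j_m}$ lies entirely inside $\Omega_{j_m}$ (Case A, stop), or $F_{k_{m+1}}(K)\cap\Omega_{j_m}=F_{k_{m+1}}(\Omega_{j_{m+1}})$ with $j_{m+1}\in\widetilde{\mathcal A}$ (Case B, continue with $x_{m+1}:=F_{k_{m+1}}^{-1}(x_m)$). Since $x$ has positive $d$-distance to $D_{j_0}$, Case A occurs at some finite step $n_x$. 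The self-similarity \eqref{ess} of $\mathcal E$, the inclusion $F_{k_{m+1}}(V_0\cap\Omega_{j_{m+1}})\subset V_1\cap\Omega_{j_m}$, and the preceding paragraph yield
\[
a(x_m):=R_{\Omega_{j_m}}(x_m,V_0\cap\Omega_{j_m})\le r_{k_{m+1}}\,a(x_{m+1})+L
\]
in Case B, together with the Case A terminal bound $a(x_{n_x})\le r_{\max}\text{diam}_R(K)+L$. Unrolling and summing the geometric series $\sum_{m\ge 0}r_{\max}^m$ gives a finite constant $A$ with $a(x)\le A$, \emph{uniformly} in $x$ and in $j_0\in\widetilde{\mathcal A}$.

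Finally, for a general $\Omega_i$, Proposition \ref{BGDprop}(ii) says every $T(\gamma)$ with $|\gamma|\ge n_0$ lies in $\widetilde{\mathcal A}$. Running the same descent for at most $n_0$ rounds either terminates in Case A before step $n_0$ (giving $R_{\Omega_i}(x,V_{n_0}\cap\Omega_i)\le\text{diam}_R(K)$ via the ambient cell), or lands in some $F_\omega(\Omega_j)$ with $|\omega|=n_0$ and $j\in\widetilde{\mathcal A}$, whereupon the previous step plus self-similarity gives $R_{\Omega_i}(x,V_{n_0}\cap\Omega_i)\le r_{\max}^{n_0}A$. Combining this with a chain-of-cells estimate for the finite set $V_{n_0}\cap\Omega_i$, obtained exactly as in Lemma \ref{connectedlemma} but at level $n_0$ via the arcwise connectedness from Proposition \ref{connectedProp}, I arrive at a uniform bound $R_{\Omega_i}(x,y)\le M_i<\infty$ for all $x,y\in\Omega_i$. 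The key obstacle---the unbounded descent depth $n_x$---is defeated precisely because $r_{\max}<1$ makes the geometric series converge and because $\widetilde{\mathcal A}$ is finite and forward-invariant (so that the base constant $L$ is uniform across all types encountered in the recursion).
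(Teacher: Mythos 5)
Your proof is correct and takes essentially the same route as the paper's: both reduce to the subfamily of domains meeting $V_0$, use Lemma \ref{connectedlemma} to get a uniform base constant, exploit the BGD descent together with the geometric series coming from $r_{\max}<1$, and then treat a general $\Omega_i$ by descending $n_0$ steps via Proposition \ref{BGDprop}(ii); the paper merely unrolls your recursion explicitly, telescoping $|u(x)-u(x_0)|\le\sum_k|u(x_k)-u(x_{k+1})|\lesssim\sum_k r_{\max}^{k/(2M)}\mathcal E_{\Omega_i}[u]^{1/2}$ along a chain of cells converging to $x$. One small point to tidy: the additive recursion is cleanest if $a(x_m)$ is taken as $\max_{p\in V_0\cap\Omega_{j_m}}R_{\Omega_{j_m}}(x_m,p)$ (or as $\sup\{|u(x_m)-u(p)|:\ \mathcal E_{\Omega_{j_m}}[u]\le1\}$, with the triangle inequality then applied to square roots of resistances), since the point-to-set effective resistance $R_{\Omega_{j_m}}(x_m,V_0\cap\Omega_{j_m})$ does not directly satisfy the chaining inequality you use.
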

\begin{proof}
By Proposition  \ref{BGDprop}(ii), we choose $n_0$ such that $\Omega_{T({\bf\gamma})}\cap V_0\neq\emptyset$ for all ${\bf\gamma}\in \Gamma_{n}$  with $n\geq n_0$. Let
\begin{equation*}
\mathcal B=\{T({\bf\gamma}):\ {\bf\gamma}\in\Gamma_{n},n\geq n_0\}.
\end{equation*}

 We first prove that $(\Omega_i,R_{\Omega_i})$ is bounded for each $i\in \mathcal B$.

For any $x=\pi(\omega)\in\Omega_i$ with $\omega\in\Sigma^\infty$, let $m\geq0$ be such that $F_{[\omega]_{m+1}}(K)\subset \Omega_i$ and $F_{[\omega]_m}(K)\nsubset \Omega_i$.
By the BGD condition, $F_{[\omega]_m}(K)\cap \Omega_i=\Phi_{\bf\gamma}(\Omega_{T({\bf\gamma})})$ for some ${\bf\gamma}\in\Gamma_{m}(i)$ ($\Phi_{\emptyset}=\rm{Id}$, $T(\emptyset)=i$). So $x\in F_{[\omega]_{m+1}}(K)\subset \Phi_{\bf\gamma}(\Omega_{T({\bf\gamma})})$.

Then by Lemma \ref{connectedlemma}, we have the following two facts.

\medskip

{\it Fact 1. For $0\leq k\leq m$, and $y\in\Phi_{[{\bf\gamma}]_{k}}\left(\Omega_{T([{\bf\gamma}]_{k})}\cap V_0\right)$, $z\in\Phi_{[{\bf\gamma}]_{k+1}}\left(\Omega_{T([{\bf\gamma}]_{k+1})}\cap V_0\right)$ ($F_{[\omega]_{m+1}}(V_0)$ if $k=m$), there exists a chain of $(n_1+k)$-cells in $\Phi_{[{\bf\gamma}]_{k}}\left(\Omega_{T([{\bf\gamma}]_{k})}\right)$ connecting $y$ and $z$.}

\medskip

{\it Fact 2.
For $k\geq m+1$, and $y\in F_{[\omega]_k}(V_0)$, $z\in F_{[\omega]_{k+1}}(V_0)$, there exists a chain of $(n_1+k)$-cells in $F_{[\omega]_k}(K)$ connecting $y$ and $z$.}

\medskip

Note that the number of cells in each above chain is bounded from above by $M=N^{n_1}$. For convenience, we adjust the number of cells in each above chain to be $M$ by adding some repeated cells in the chain.
Hence from these two facts, there exists a chain of cells $\{F_{\omega^{(k)}}(K)\}_{k=1}^\infty$ in $\Omega_i$ and a sequence of points $\{x_k\}_{k=0}^\infty$ with $x_0\in F_{\omega^{(1)}}(V_0)\cap V_0$, $x_k\in F_{\omega^{(k)}}(V_0)\cap F_{\omega^{(k+1)}}(V_0)$ for $k\geq1$ and $\lim_{k\rightarrow\infty}x_k=x$ (w.r.t. $d$), and $|\omega^{((l-1)M+1)}|=|\omega^{((l-1)M+2)}|=\cdots=|\omega^{(l M)}|=n_1+l-1$ for each $l\geq1$. We have for any $u\in\mathcal F_{\Omega_{i}}$,
\begin{align*}
|u(x)-u(x_{0})|&\leq \sum_{k=0}^\infty|u(x_k)-u(x_{k+1})|\\
&\lesssim\sum_{k=1}^\infty r^{1/2}_{\omega^{(k)}}\mathcal E_{F_{\omega^{(k)}}(K)}[u]^{1/2}
\leq\sum_{k=1}^\infty r^{{(n_1+k/M-1)}/{2}}_{\max}\mathcal E_{\Omega_i}[u]^{1/2}
\lesssim\mathcal E_{\Omega_i}[u]^{1/2}.
\end{align*}
Hence we see that for $x,y\in\Omega_i$,
\begin{equation*}
|u(x)-u(y)|\leq |u(x)-u(x_0)|+|u(y)-u(x_0)|\leq C \mathcal E_{\Omega_i}[u]^{1/2}
\end{equation*}
for some constant $C>0$ independent of $u$, $i$, $x$ and $y$. This gives that
$\Omega_i$ is bounded under $R_{\Omega_i}$ for $i\in \mathcal B$.

Finally, for $i\in \mathcal A$, noticing that for each ${\bf\gamma}\in\Gamma_{n_0}(i)$, $T({\bf\gamma})\in\mathcal B$, and $\Omega_{T({\bf\gamma})}$ is already bounded, by a similar chain argument as above, $\Omega_i$ is also bounded under $R_{\Omega_i}$.
\end{proof}

\begin{theorem}\label{thm3.1}
For $i=1,\ldots,P$, $(\partial \Omega_i,R_{\Omega_i})$ is homeomorphic to $(\Gamma_{\infty}(i),\rho)$, and $(\widetilde\Omega_i,R_{\Omega_i})$ is compact.
\end{theorem}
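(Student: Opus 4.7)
The plan is to extend the symbolic coding $\iota_i\colon\Gamma_\infty(i)\to D_i$ to a map $\tilde\iota_i\colon\Gamma_\infty(i)\to\widetilde\Omega_i$ whose image is precisely $\partial\Omega_i$, and then derive compactness of $\widetilde\Omega_i$. For ${\bf\gamma}\in\Gamma_\infty(i)$, the nested subdomains $\Phi_{[{\bf\gamma}]_n}(\Omega_{T([{\bf\gamma}]_n)})\subset\Omega_i$ have $R_{\Omega_i}$-diameter controlled by the resistance self-similarity $R_{\Phi_\tau(\Omega_j)}(\Phi_\tau(a),\Phi_\tau(b))=r_\tau R_{\Omega_j}(a,b)$ and Proposition \ref{lemma4.2}:
\[
\mathrm{diam}_{R_{\Omega_i}}\!\big(\Phi_{[{\bf\gamma}]_n}(\Omega_{T([{\bf\gamma}]_n)})\big)\leq r_{[{\bf\gamma}]_n}\max_{1\leq j\leq P}\mathrm{diam}_{R_{\Omega_j}}(\Omega_j)\leq C\,r_{\max}^n.
\]
Hence any sequence $x_n\in\Phi_{[{\bf\gamma}]_n}(\Omega_{T([{\bf\gamma}]_n)})$ is $R_{\Omega_i}$-Cauchy, and I define $\tilde\iota_i({\bf\gamma})$ as its limit in $\widetilde\Omega_i$. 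Since $R\leq R_{\Omega_i}$ yields a continuous map $(\Omega_i,R_{\Omega_i})\to(K,d)$ extending continuously to $\widetilde\Omega_i$, and $x_n\to\iota_i({\bf\gamma})\in D_i$ in $d$, the image point cannot lie in $\Omega_i$; thus $\tilde\iota_i({\bf\gamma})\in\partial\Omega_i$. Continuity of $\tilde\iota_i$ follows from the same diameter estimate: ${\bf\gamma}^{(k)}\to{\bf\gamma}$ in $\rho$ means $[{\bf\gamma}^{(k)}]_n=[{\bf\gamma}]_n$ for large $k$, placing both $\tilde\iota_i({\bf\gamma}^{(k)})$ and $\tilde\iota_i({\bf\gamma})$ in a common shrinking subdomain.

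For injectivity, given ${\bf\gamma}\neq{\bf\eta}$, I choose $n_1$ large enough that the finite set $\Phi_{[{\bf\gamma}]_{n_1}}(K)\cap\Phi_{[{\bf\eta}]_{n_1}}(K)$ is contained in $D_i$ (this holds because the intersection consists of junction points whose $d$-limits as $n_1\to\infty$ lie in $\{\iota_i({\bf\gamma}),\iota_i({\bf\eta})\}\subset D_i$). Consequently $A:=\Phi_{[{\bf\gamma}]_{n_1}}(K)\cap\Omega_i$ and $B:=\Phi_{[{\bf\eta}]_{n_1}}(K)\cap\Omega_i$ are disjoint subsets of $\Omega_i$. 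I then build $u\in\mathcal F_{\Omega_i}$ with $u\equiv 1$ on $A$, $u\equiv 0$ on $B$, by assigning these values at the vertices in $V_{n_1}\cap\Omega_i$ (no conflict arises, by disjointness), specifying arbitrary values elsewhere, extending harmonically on full $n_1$-cells of $\Omega_i$, and recursively constructing $u$ on the boundary-touching subdomains $\Phi_\tau(\Omega_{T(\tau)})$ at level $n_1$. Finiteness of $\mathcal E_{\Omega_i}[u]$ holds because the harmonic pieces contribute finitely many bounded terms, and the recursive subdomain contributions are summable via the geometric decay $r_\tau\leq r_{\max}^{n_1}$. The continuous extension of $u$ to $\widetilde\Omega_i$ then takes values $1$ at $\tilde\iota_i({\bf\gamma})$ and $0$ at $\tilde\iota_i({\bf\eta})$, distinguishing them.

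For surjectivity, given $y\in\partial\Omega_i$ with $x_n\in\Omega_i$ and $R_{\Omega_i}(x_n,y)\to 0$, the sequence $\{x_n\}$ is $d$-Cauchy by $R\leq R_{\Omega_i}$, converging to some $x'\in D_i$ (Lemma \ref{lemma3.0} excludes $x'\in\Omega_i$). For each $n$ I let $m_n$ be the maximal level with an $m_n$-cell $F_{\omega^{(n)}}(K)\ni x_n$ satisfying $F_{\omega^{(n)}}(K)\cap D_i\neq\emptyset$; since $x_n\to x'\in D_i$ in $d$, $m_n\to\infty$. BGD gives ${\bf\eta}^{(n)}\in\Gamma_{m_n}(i)$ with $F_{\omega^{(n)}}(K)\cap\Omega_i=\Phi_{{\bf\eta}^{(n)}}(\Omega_{T({\bf\eta}^{(n)})})$; extend to ${\bf\gamma}^{(n)}\in\Gamma_\infty(i)$ and pass to a $\rho$-convergent subsequence ${\bf\gamma}^{(n)}\to{\bf\gamma}$ using compactness of $(\Gamma_\infty(i),\rho)$. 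The diameter bound gives $R_{\Omega_i}(x_n,\tilde\iota_i({\bf\gamma}^{(n)}))\leq C r_{\max}^{m_n}\to 0$, and continuity of $\tilde\iota_i$ yields $\tilde\iota_i({\bf\gamma}^{(n)})\to\tilde\iota_i({\bf\gamma})$; combining, $y=\tilde\iota_i({\bf\gamma})$. Hence $\tilde\iota_i$ is a continuous bijection from the compact $(\Gamma_\infty(i),\rho)$ onto the Hausdorff $(\partial\Omega_i,R_{\Omega_i})$, so a homeomorphism. Compactness of $\widetilde\Omega_i$ then follows by sequential compactness: any $\{y_k\}\subset\widetilde\Omega_i$ admits approximants $z_k\in\Omega_i$ with $R_{\Omega_i}(y_k,z_k)<1/k$, whose $d$-convergent subsequence $z_{k_j}\to z'\in\overline\Omega_i$ is $R_{\Omega_i}$-convergent either by Lemma \ref{lemma3.0} (if $z'\in\Omega_i$) or by the surjectivity construction (if $z'\in D_i$).

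The main obstacle will be the injectivity step. The delicate case is when $\iota_i({\bf\gamma})=\iota_i({\bf\eta})$, where distinct symbolic codings project to the same geometric point in $D_i$; one must verify that the separating function $u$ can still be constructed with finite $\mathcal E_{\Omega_i}$-energy. The crucial observation is that this shared point lies in $D_i$ and hence not in $\Omega_i$, so inside $\Omega_i$ the relevant cell intersections remain disjoint and admit independent value assignments. Carrying out the explicit recursive construction of $u$ across all BGD subdomain pieces, and verifying energy summability at every scale, is the technically heaviest part of the argument.
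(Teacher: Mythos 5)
Your proposal is correct and follows essentially the same route as the paper's proof: the nested subdomains $\Omega_{[{\bf\gamma}]_n}=\Phi_{[{\bf\gamma}]_n}(\Omega_{T([{\bf\gamma}]_n)})$ with $R_{\Omega_i}$-diameter $\lesssim r_{\max}^n$ (via Proposition \ref{lemma4.2} and the energy scaling) give the coding map and its continuity, finite-energy separating functions that are constant on two disjoint subdomains give injectivity (this is exactly the paper's \eqref{Rolowbd} and \eqref{HoDown}), and compactness of $\widetilde\Omega_i$ follows by splitting sequences according to whether they approach the boundary. The one phrase to tighten is the final step: when $z'\in D_i$, $d$-convergence of $z_{k_j}$ does not by itself yield $R_{\Omega_i}$-convergence of that subsequence (a single geometric boundary point can carry several $R$-boundary points, so the sequence may oscillate between them), and you must extract a further subsequence along which the associated admissible words converge in $(\Gamma_\infty(i),\rho)$ --- which is precisely what your surjectivity construction supplies.
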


\begin{proof}
For ${\bf \gamma}\in \Gamma_*$, we write $\Omega_{\bf\gamma}:=\Phi_{\bf\gamma}(\Omega_{T({\bf\gamma})})$ for brevity. Let $x\in \partial\Omega_i$, and  $\{x_n\}_{n\geq1}$ be a sequence in $\Omega_i$ such that $\lim_{n\rightarrow\infty}x_n=x$ w.r.t. $R_{\Omega_i}$.

\medskip

{\it Claim $(*)$. For any $m\geq1$, there exists a unique ${\bf\gamma}\in\Gamma_m(i)$ such that $x_n\in \Omega_{\bf\gamma}$ for all large enough $n$.}

\medskip

For $m\geq1$, denote \begin{equation*}
U_{i,m}=\bigcup_{{\bf\gamma}\in \Gamma_m(i)}\Omega_{\bf\gamma}.
\end{equation*}
We prove this claim through two steps.

First we prove that, for any $m$, it always holds that $x_n\in U_{i,m}$ for all large enough $n$. Otherwise, there exists $m_0\geq1$ and a subsequence $\{x_{n_k}\}$ contained in $\Omega_i\setminus U_{i,m_0}$ which converges to $x$ w.r.t. $R_{\Omega_i}$. This gives that $x$ is in the closure of $\Omega_i\setminus U_{i,m_0}$ under $R_{\Omega_i}$, which is contained in $\Omega_i$ by using Lemma \ref{lemma3.0}, a contradiction to $x\in\partial\Omega_i$.

Next, we turn to proving Claim $(*)$. If it does not hold, then  we can pick $m_1\geq1$ and ${\bf\gamma}\neq{\bf\eta}\in\Gamma_{m_1}(i)$ such that both $\Omega_{\bf\gamma}$ and $\Omega_{\bf\eta}$ contain infinitely many elements in the sequence $\{x_n\}$. We may require that
$\Omega_{\bf\gamma}\cap\Omega_{\bf\eta}=\emptyset$ in addition, since otherwise, we can replace $\bf\gamma$ and $\bf\eta$ by their offsprings $\bf\gamma'$ and $\bf\eta'$ such that $\Omega_{\bf\gamma'}\cap\Omega_{\bf\eta'}=\emptyset$ and replace $m_1$ by a larger number $m_2=|{\bf\gamma'}|=|{\bf\eta'}|$.
Considering a function $u$ on $\Omega_i$ such that $u|_{\Omega_{\bf\gamma}}=0$, $u|_{\Omega_{\bf\eta}}=1$ and harmonic elsewhere in each $m_1$-cell in $\Omega_i$,
we have
\begin{equation}\label{Rolowbd}
R_{\Omega_i}(y,z)\geq \mathcal E_{\Omega_i}[u]^{-1}>0
\end{equation}
 for all $y\in\Omega_{\bf\gamma}$ and $z\in\Omega_{\bf\eta}$. This contradicts that $\{x_n\}$ is a Cauchy sequence. So Claim $(*)$ holds.

\medskip

By Claim $(*)$,  each sequence $\{x_n\}$ converging to $x\in \partial\Omega_i$ determines a unique infinite admissible word ${\bf\gamma}\in\Gamma_\infty(i)$. For two sequences $\{x_n\}$, $\{y_n\}$, if they determine two distinct words ${\bf\gamma},{\bf\eta} \in\Gamma_\infty(i)$, then they must converge to distinct points in $\partial\Omega_i$, since from the above paragraph, $R_{\Omega_i}(x_n,y_n)\geq c_0>0$ for some $c_0>0$ and all large enough $n$. So for each $x\in\partial\Omega_i$, it determines a unique word ${\bf\gamma}\in \Gamma_\infty(i)$, we denote it as $\mathcal T(x)$.

For ${\bf\gamma}\in \Gamma_{\infty}(i)$, we pick a sequence $\{x_n\}$ in $\Omega_i$ such that $x_n\in \Phi_{[{\bf\gamma}]_n}({K})$ for all $n$. Note that
\begin{equation*}
R_{\Omega_i}(x_n,x_{n+1})\leq r_{[{\bf\gamma}]_n}R_{\Omega_{T([{\bf\gamma}]_n)}}\left(\Phi_{[{\bf\gamma}]_n}^{-1}(x_n),\Phi_{[{\bf\gamma}]_n}^{-1}(x_{n+1})\right)\leq C_1r_{\max}^n
\end{equation*}
for some $C_1>0$ by Proposition \ref{lemma4.2}, where $r_{[{\bf\gamma}]_n}=r_{\omega}$ with $\omega\in\Sigma^n$ uniquely determined by $F_{\omega}=\Phi_{[{\bf\gamma}]_n}$ under the BGD condition.
We see that $\{x_n\}$ is a Cauchy sequence w.r.t. $R_{\Omega_i}$ and has a limit $x$ in $\widetilde\Omega_i$. However, due to Lemma \ref{lemma3.0}, $x\notin \Omega_i$. Hence $\mathcal T$ is a surjection.

Now we prove that $\mathcal T$ is a homeomorphism between $(\partial\Omega_i,R_{\Omega_i})$ and $(\Gamma_\infty(i),\rho)$.
Pick $x\neq y\in \partial\Omega_i$, denote $\mathcal T(x)={\bf\gamma}$, $\mathcal T(y)={\bf\eta}$ and $\bf\kappa={\bf\gamma}\wedge{\bf\eta}$. Let $\{x_n\},\{y_n\}$ be two sequences converging to $x$ and $y$ respectively. Since $x_n,y_n\in\Omega_{\bf\kappa}$ for all large enough $n$, we have by Proposition \ref{lemma4.2},
\begin{equation}\label{HoUp}
R_{\Omega_i}(x_n,y_n)\leq C_1r_{\max}^{|\bf\kappa|}=C_1\rho({\bf\gamma},{\bf\eta})^{-\log r_{\max}/\log2}.
\end{equation}
On the other hand, since $\{x_n\}$, $\{y_n\}$ will enter two disjoint offsprings of $\Omega_{\bf\kappa}$  for all large $n$, by using the same argument as \eqref{Rolowbd}, we have
\begin{equation}\label{HoDown}
R_{\Omega_i}(x_n,y_n)\geq C_2r_{\min}^{|\bf\kappa|}= C_2\rho({\bf\gamma},{\bf\eta})^{-\log r_{\min}/\log2}
\end{equation}
for some $C_2>0$.
By combining \eqref{HoUp} and \eqref{HoDown} and letting $n\rightarrow\infty$, we have
\begin{equation}\label{eq-homeomorphism}
C_2\rho({\bf\gamma},{\bf\eta})^{-\log r_{\min}/\log2}\leq R_{\Omega_i}(x,y)\leq C_1\rho({\bf\gamma},{\bf\eta})^{-\log r_{\max}/\log2}.
\end{equation}
Combining this with the fact that $\mathcal T$ is a surjection (and an injection by \eqref{eq-homeomorphism}), we have $(\partial\Omega_i,R_{\Omega_i})$ is homeomorphic to $(\Gamma_{\infty}(i),\rho)$ with a homeomorphism $\mathcal T$.

\medskip

Since $(\Gamma_{\infty}(i),\rho)$ is compact, by homeomorphism, $(\partial\Omega_i,R_{\Omega_i})$ is also compact. For a sequence $\{x_n\}$ in $\widetilde\Omega_i$, we prove that it must contain a converging subsequence. We separate the proof into the following two cases.

 {\it (a). $\liminf_{n\rightarrow\infty}\inf\{R_{\Omega_i}(x_n,y):\ y\in\partial\Omega_i\}=0$.}

  Then there is a subsequence $\{x_{n_k}\}$ such that $\lim_{k\rightarrow\infty}\inf\{R_{\Omega_i}(x_{n_k},y):\ y\in\partial\Omega_i\}=0$. By the compactness of $\partial\Omega_i$, there exists a converging subsequence of $\{x_{n_k}\}$.

{\it  (b). $\liminf_{n\rightarrow\infty}\inf\{R_{\Omega_i}(x_n,y):\ y\in\partial\Omega_i\}>0$. }
  
  Then by Claim $(*)$, we see that there exists $m_0\geq1$ such that $\{x_n\}$ has a subsequence $\{x_{n_k}\}$ contained in $\Omega_i\setminus U_{i,m_0}$. Noting that $\Omega_i\setminus U_{i,m_0}$ is a union of finite $m_0$-cells of $K$ and is compact under $R_{\Omega_i}$, thus $\{x_{n_k}\}$ has a converging subsequence.

Above all, any sequence in $\widetilde\Omega_i$ has a converging subsequence, hence $(\widetilde\Omega_i,R_{\Omega_i})$ is compact.
\end{proof}
In a recent work \cite[Theorem 4.5]{KT22}, Kigami and Takahashi obtained a similar result on a particular $\Omega$, the SG minus its bottom line, by utilizing the binary tree structure of that domain.

\medskip

{\it \noindent{\bf Remark 1.} Recall that there is a continuous surjective map $\iota_i$ from $(\Gamma_{\infty}(i),\rho)$ to $(D_i,d)$. This induces a continuous surjective map, still denoted as $\iota_i$, from the $R$-boundary $(\partial \Omega_i,R_{\Omega_i})$ to the $d$-boundary $(D_i,d)$. Hence if $f$ is a continuous function on $D_i$, $f\circ\iota_i$ is a continuous function on $\partial\Omega_i$.

\medskip

\noindent{\bf Remark 2.} For ${\bf\gamma}\in\Gamma_*$, writing $I({\bf\gamma})=i$, $T({\bf\gamma})=j$, we define $\theta_{\bf\gamma}:\ \Gamma_\infty(j)\rightarrow \Gamma_\infty(i)$ by $\theta_{\bf\gamma}({\bf\eta})$ being the concatenation of ${\bf\gamma}$ and ${\bf\eta}$ for each ${\bf\eta}\in\Gamma_\infty(j)$. By Theorem \ref{thm3.1}, with a slight abuse of notation, $\theta_{\bf\gamma}$ can be viewed as a continuous injection
from $\partial\Omega_{j}$ into $\partial\Omega_{i}$. It is direct to check that
\begin{equation*}
\Phi_{\bf\gamma}\circ\iota_{j}=\iota_{i}\circ\theta_{\bf\gamma}.
\end{equation*}
Actually, the map $\widetilde\Phi_{\bf\gamma}:(\widetilde\Omega_j,R_{\Omega_j})\rightarrow(\widetilde\Omega_i,R_{\Omega_i})$ given by $\Phi_{\bf\gamma}$ on $\Omega_j$ and $\theta_{\bf\gamma}$ on $\partial\Omega_j$ is the continuous extension of $\Phi_{\bf\gamma}|_{\Omega_j}$.

Also, $\{\partial\Omega_i\}_{i=1}^P$ satisfy the decomposition
\begin{equation}\label{RboundaryGD}
\partial\Omega_i=\bigcup_{j=1}^P\bigcup_{\gamma\in\Gamma(i,j)}\theta_{\gamma}(\partial\Omega_j),\qquad  1\leq i\leq P.
\end{equation}

}

%

\medskip

By a standard theory of resistance forms \cite[Theorem 2.3.10]{K}, the resistance form $(\mathcal E_{\Omega},\mathcal F_{\Omega})$ on $\Omega$ (we omit the subscripts $i$ for short) extends to be a resistance form on $\widetilde\Omega$, where each function in $\mathcal F_\Omega$ is continuously extended to $\widetilde\Omega$.
Furthermore, for a Radon measure $\nu$ on $\widetilde\Omega$ with full support, $(\mathcal E_{\Omega},\mathcal F_{\Omega})$ will generate a Dirichlet form on $L^2(\widetilde\Omega,\nu)$, which is associated with a nonpositive self-adjoint operator $\Delta_\nu$ called the Laplacian on $\widetilde\Omega$ (see \cite[Theorem 2.4.2]{K} or \cite[Theorem 9.4]{Kig12}).

For a nonempty closed subset $A\subset \widetilde\Omega$, by \cite[Lemma 8.2 and Theorem 8.4]{Kig12}, for any $u_0\in \mathcal F_\Omega|_A=\{v|_A:\ v\in\mathcal F_\Omega\}$, there exists a unique $u\in\mathcal F_\Omega$ such that $u|_A=u_0$ and
\begin{equation*}
\mathcal E_{\Omega}[u]=\min\{\mathcal E_\Omega[v]:\ v\in\mathcal F_{\Omega},v|_A=u_0\}.
\end{equation*}
The function $u$ also satisfies
\begin{equation*}
\Delta_\nu u=0,\quad \text{ in }\widetilde\Omega\setminus A,
\end{equation*}
in the weak sense, i.e.
$\mathcal E_\Omega(u,v)=0$, for any $v\in \mathcal F_\Omega, v|_A=0$.
Say the function $u$ is {\it harmonic} in $\widetilde\Omega\setminus A$, and call $u$ a {\it harmonic extension} of $u_0$ from $A$ to $\widetilde\Omega$. In particular, when $A=\partial\Omega$, we say $u$ is a harmonic function in $\Omega$ with boundary values $u_0$.

In a standard way \cite[Theorem 4.3]{Kig12}, for a finite collection  $G=\{A_1,\ldots,A_m\}$ of nonempty disjoint closed sets in $\widetilde\Omega$, we can take a slightly different kind of trace of $(\mathcal E_{\Omega},\mathcal F_{\Omega})$ to get a ``shorted" resistance form $(\mathcal E_G,\ell(G))$ on $G$ (viewing $G$ as a finite point set). Precisely, by identifying $\ell(G)$ with
\begin{equation*}
\{u|_{\bigcup_{i=1}^mA_i}:\ u\in\mathcal F_{\Omega}, u \text{ takes constant values on each } A_i\},
\end{equation*}
define for $f,g\in\ell(G)$, $\mathcal E_G(f,g):=\mathcal E_{\Omega}(H^Gf,H^Gg)$, where $H^Gf$ (or $H^Gg$) is the unique harmonic extension of $f$ (or $g$) from ${\bigcup_{i=1}^mA_i}$ to $\widetilde\Omega$.
Such a trace will induce an electric network on $G$.

With a little abuse of notation, sometimes for $f\in\ell(G)$, we write $(d H^Gf)_p$ instead of $(d f)_p$, where $p$ stands for some $A_i$.
By compatibility, we have $(dH^Gf)_p=(dH^{G'}(H^Gf|_{G'}))_p$ if $G'\supset G$ and $G'\setminus G$ is a finite subset of $\widetilde\Omega\setminus\bigcup_{k=1}^mA_k$.

In later sections, we always take $G$ to be a collection of sets in the form of $\theta_{\bf\gamma}(\partial\Omega_{T({\bf\gamma})})$ with ${\bf\gamma}\in\Gamma_*$ together with some single points in $\Omega$.

\section{\bf Hitting probability}\label{sec5}
\setcounter{equation}{0}\setcounter{theorem}{0}

Let $(K,\{F_i\}_{i=1}^N,V_0)$ be a p.c.f. self-similar set with  $V_0=\{p_1,\ldots,p_Q\}$ for some $Q\geq2$. Let $(\mathcal E, \mathcal F)$ be a strongly recurrent self-similar resistance form on $K$ satisfying \eqref{ess}. 

Let $P\geq1$ and $\{\Omega_i\}_{i=1}^{P}$ be a finite collection of domains in $K$ with $d$-boundaries $\{D_i\}_{i=1}^{P}$ satisfying the BGD condition \eqref{graphdirected}. Denote by $\{\partial\Omega_i\}_{i=1}^{P}$ the corresponding $R$-boundaries of $\{\Omega_i\}_{i=1}^{P}$ respectively.
Denote $\mathcal A=\{1,\ldots,P\}$. 
%
%


\medskip

\noindent{\bf Flux transfer matrices.} Let $(\mathcal A,\Gamma)$ be the directed graph induced from the BGD condition.
 For each $\gamma\in\Gamma(i,j)$, notice that by BGD, there is a contraction map $\Phi_\gamma$ such that $\Phi_\gamma(\Omega_j)\subset \Omega_i$.
 In the following, we associate each $\gamma$ with a $Q\times Q$ real matrix $M_\gamma$, whose $(k,\ell)$-entry represents:

  \medskip

  {\it the flux of the unit flow on $\widetilde\Omega_i$ from $\partial\Omega_i$ to $p_k$ through $\Phi_\gamma(p_\ell)$ outwards from $\Phi_{\gamma}(\Omega_j)$.}

\medskip

For any $1\leq k\leq Q$, if $p_k\notin \Omega_i$, we simply set the $k$-th row of $M_\gamma$ to be zeros; otherwise, if $p_k\in \Omega_i$,
let $\varphi$ be the realization of $R_{\Omega_i}(\partial\Omega_i,p_k)$, i.e. $\varphi$ is the unique function on $\widetilde \Omega_i$ such that $\varphi|_{\partial\Omega_i}=0$, $\varphi(p_k)=1$ and $\varphi$ is harmonic in $\Omega_i\setminus \{p_k\}$ with $\mathcal E_{\Omega_i}[\varphi]=R_{\Omega_i}(\partial\Omega_i,p_k)^{-1}$. Let
\begin{equation}\label{vk}
v:=R_{\Omega_i}(\partial\Omega_i,p_k)\varphi,
\end{equation}
then $v$ satisfies $(dv)_{p_k}=1$. Since $\gamma\in\Gamma({i,j})$, we have $\Phi_{\gamma}(\Omega_j)\subset \Omega_i$. Consider the restriction of the function $v$ on $\Phi_{\gamma}(\Omega_j)$, and denote it as $\tilde v$, then $\tilde v$ is harmonic in $\Phi_{\gamma}(\Omega_j)\setminus \Phi_{\gamma}(V_0)$ in the sense of $\mathcal E_{\Phi_{\gamma}(\Omega_j)}$. Now by tracing the resistance form $\mathcal E_{\Phi_{\gamma}(\Omega_j)}$ to the finite set $\Phi_{\gamma}(V_0)\cup\{\theta_{\gamma}(\partial\Omega_{j})\}$ (i.e. the points in $\Phi_{\gamma}(V_0)$ together with the ``singleton" obtained by shorting $\theta_{\gamma}(\partial\Omega_{j})$), we obtain a electric network, see Remark 2 in Section \ref{sec4}. Based on this electric network, we define
\begin{equation}\label{fluxtransmtx}
M_\gamma(k,\ell)= \begin{cases}(d\tilde v)_{\Phi_{\gamma}(p_\ell)}=\frac{1}{r_\omega}\mathcal E_{\Omega_j}\left(v\circ\Phi_{\gamma},H^{(\Omega_j\cap V_0)\cup\{\partial\Omega_j\}}({\bf1}_{p_\ell})\right)\quad &\text{ if $p_\ell\in \Omega_j$},\\
0 \quad &\text{ if $p_\ell\notin \Omega_j$},
\end{cases}
\end{equation}
where $\omega\in\{1,\ldots,N\}$ is such that $F_\omega=\Phi_\gamma$ and $H^{(\Omega_j\cap V_0)\cup\{\partial\Omega_j\}}({\bf1}_{p_\ell})\in\mathcal F_{\Omega_j}$ denotes the function on $\widetilde\Omega_j$ which is $1$ at $p_\ell$, $0$ on $\partial\Omega_{j}$ and on $\Omega_j\cap(V_0\setminus\{p_\ell\})$ and harmonic in $\Omega_j\setminus V_0$ with respect to $(\mathcal E_{\Omega_j},\mathcal F_{\Omega_j})$.



\medskip

We call $\{M_\gamma\}_{\gamma\in\Gamma(i)}$ the {\it flux transfer matrices} associated with domain $\Omega_i$.

\begin{proposition}
For  $1\leq i\leq P$ and $1\leq k\leq Q$ such that $p_k\in\Omega_i\cap V_0$, we have
\begin{equation*}
\sum_{\ell=1}^QM_{\gamma}(k,\ell)>0, \quad \forall \gamma\in\Gamma(i),
\end{equation*}
 and
\begin{equation}\label{eqeq3.1}
\sum_{\gamma\in\Gamma({i})}\sum_{\ell=1}^QM_{\gamma}(k,\ell)=1.
\end{equation}
\end{proposition}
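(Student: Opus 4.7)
The plan is to apply the Gauss--Green identity \eqref{zerodiv} for electric networks (flux conservation) at two different scales: first within each sub-cell $\Phi_\gamma(\Omega_j)$, then globally on $\widetilde\Omega_i$. The argument splits into a local flux-rewriting, a global summation, and a strict-positivity argument.

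For the identity \eqref{eqeq3.1}, fix $\gamma\in\Gamma(i)$ with $j=T(\gamma)$. The restriction $\tilde v:=v|_{\Phi_\gamma(\Omega_j)}$ is harmonic in $\Phi_\gamma(\Omega_j)\setminus\Phi_\gamma(V_0)$ with respect to $\mathcal E_{\Phi_\gamma(\Omega_j)}$, since the unit source $p_k$ of $v$ is a vertex, not an interior point of the sub-cell. Tracing $\mathcal E_{\Phi_\gamma(\Omega_j)}$ to the finite set $(\Phi_\gamma(V_0)\cap\Omega_j)\cup\{\theta_\gamma(\partial\Omega_j)\}$ and applying \eqref{zerodiv} to $\tilde v$ in the resulting network yields
\begin{equation*}
\sum_{\ell=1}^{Q} M_\gamma(k,\ell) \;=\; \sum_{p_\ell\in V_0\cap\Omega_j}(d\tilde v)_{\Phi_\gamma(p_\ell)} \;=\; -(d\tilde v)_{\theta_\gamma(\partial\Omega_j)},
\end{equation*}
so that $\sum_\ell M_\gamma(k,\ell)$ equals the outward flux of $v$ across the $R$-boundary piece $\theta_\gamma(\partial\Omega_j)$. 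Summing over $\gamma\in\Gamma(i)$ and invoking the boundary decomposition \eqref{RboundaryGD}, the total equals the flux of $v$ across all of $\partial\Omega_i$. Applying \eqref{zerodiv} once more to the coarser trace of $\mathcal E_{\Omega_i}$ onto $\{p_k\}\cup\{\theta_\gamma(\partial\Omega_{T(\gamma)}):\gamma\in\Gamma(i)\}$, in which $v$ is harmonic at every vertex except $p_k$ where $(dv)_{p_k}=1$ by \eqref{vk}, this total outward flux must equal $1$, proving \eqref{eqeq3.1}.

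For the strict positivity, I would combine the identity above with a minimum-principle argument. Since $v(p_k)=R_{\Omega_i}(\partial\Omega_i,p_k)>0$, $v\equiv 0$ on $\partial\Omega_i$, and $v$ is harmonic on $\Omega_i\setminus\{p_k\}$, the discrete minimum principle (propagating through neighbors via the irreducibility of the local electric networks) forces $v>0$ throughout $\Omega_i$. By Proposition \ref{BGDprop}(i), the hypothesis $p_k\in\Omega_i\cap V_0$ guarantees $V_0\cap\Omega_j\neq\emptyset$, so there is at least one interior vertex $\Phi_\gamma(p_\ell)$ with $v(\Phi_\gamma(p_\ell))>0$. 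Expanding the Neumann derivative at the shorted boundary vertex,
\begin{equation*}
-(d\tilde v)_{\theta_\gamma(\partial\Omega_j)} \;=\; \sum_{p_\ell\in V_0\cap\Omega_j} g\bigl(\theta_\gamma(\partial\Omega_j),\Phi_\gamma(p_\ell)\bigr)\,v(\Phi_\gamma(p_\ell)) \;\geq\; 0,
\end{equation*}
with strictness equivalent to $\theta_\gamma(\partial\Omega_j)$ not being isolated in the shorted network. This is guaranteed by the finite diameter of $(\widetilde\Omega_j,R_{\Omega_j})$ from Proposition \ref{lemma4.2}: isolation would force $R_{\Omega_j}(\partial\Omega_j,V_0\cap\Omega_j)=\infty$, contradicting the proposition.

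The main obstacle I anticipate is the flux-bookkeeping in the summation step: one must verify that in the coarser trace on $\{p_k\}\cup\{\theta_\gamma(\partial\Omega_{T(\gamma)}):\gamma\in\Gamma(i)\}$, no flux is double-counted where distinct boundary pieces $\theta_\gamma(\partial\Omega_{T(\gamma)})$ and $\theta_{\gamma'}(\partial\Omega_{T(\gamma')})$ may abut inside $\partial\Omega_i$. That distinct $\gamma$ correspond to distinct sub-cells $\Phi_\gamma$ is a consequence of the BGD condition together with the hypothesis that all $\Omega_i$ are distinct, but making the flux match-up at overlapping $R$-boundary pieces precise likely requires an explicit decomposition of $\mathcal E_{\Omega_i}$ into contributions from its $1$-cells (the interior cells $F_\omega(K)\subset\Omega_i$ plus the boundary cells $\Phi_\gamma(\Omega_{T(\gamma)})$) and a careful compatibility argument for the corresponding traces.
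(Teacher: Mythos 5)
Your proposal is correct and takes essentially the same route as the paper's proof: both apply the flux conservation identity \eqref{zerodiv} first on the traced network of each sub-cell $\Phi_\gamma(\Omega_{T(\gamma)})$ to identify $\sum_\ell M_\gamma(k,\ell)$ with $-(d\tilde v)_{\theta_\gamma(\partial\Omega_{T(\gamma)})}$, then on the coarser trace of $\mathcal E_{\Omega_i}$ to $\{p_k\}\cup\{\theta_\gamma(\partial\Omega_{T(\gamma)})\}$ together with $(dv)_{p_k}=1$ to get \eqref{eqeq3.1}, and both derive strict positivity from $v>0$ on the interior vertices (maximum/minimum principle) combined with $v=0$ on the $R$-boundary. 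Your extra care about non-isolation of the shorted boundary vertex and about disjointness of the pieces $\theta_\gamma(\partial\Omega_{T(\gamma)})$ only makes explicit what the paper leaves implicit (the latter is the disjointness of the union in \eqref{eqeq1}), so no genuinely different idea is involved.
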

\begin{proof}
For ${\gamma}\in \Gamma(i)$, by the strong maximum principle (see \cite[Theorem 4.1]{Kuw}), $v>0$ (defined in \eqref{vk})  on $\Phi_\gamma(\Omega_{T(\gamma)}\cap V_0)\subset\Omega_i$. Since $v|_{\partial \Omega_i}=0$, we see that $(d\tilde v)_{\theta_{\gamma}(\partial\Omega_{T(\gamma)})}<0$. Then on the electric network given by tracing the resistance form $\mathcal E_{\Phi_{\gamma}(\Omega_{T(\gamma)})}$ to the finite set $\Phi_{\gamma}(\Omega_{T(\gamma)}\cap V_0)\cup\{\theta_{\gamma}(\partial\Omega_{T(\gamma)})\}$,  by \eqref{zerodiv}, we have
\begin{equation*}
(d\tilde v)_{\theta_{\gamma}(\partial\Omega_{T(\gamma)})}+\sum_{\ell: p_\ell\in \Omega_{T(\gamma)}}(d\tilde v)_{\Phi_{\gamma}(p_\ell)}=0,
\end{equation*}
which gives that $\sum_{\ell=1}^QM_{\gamma}(k,\ell)=-(d\tilde v)_{\theta_{\gamma}(\partial\Omega_{T(\gamma)})}>0$.

\medskip

To prove \eqref{eqeq3.1}, for $k$ with $p_k\in\Omega_i\cap V_0$, we trace the resistance form $\mathcal E_{\Omega_i}$ to the finite set $\{p_k\}\cup\{\theta_{\gamma}(\partial\Omega_{T(\gamma)}):\ \gamma\in\Gamma(i)\}$ to get an  electric network. By using \eqref{zerodiv} again, we obtain
\begin{equation*}
(dv)_{p_k}+\sum_{\ell: p_\ell\in \Omega_{T(\gamma)}}(d v)_{\theta_{\gamma}(\partial\Omega_{T(\gamma)})}=0,
\end{equation*}
which together with $(dv)_{p_k}=1$ and $(d v)_{\theta_{\gamma}(\partial\Omega_{T(\gamma)})}=(d\tilde v)_{\theta_{\gamma}(\partial\Omega_{T(\gamma)})}=-\sum_{\ell=1}^QM_{\gamma}(k,\ell)$ yields \eqref{eqeq3.1}.
\end{proof}

From now, for brevity of notation, for ${\bf\gamma}\in\Gamma_m$, $m\geq1$, we write
\begin{equation}\label{eq5.12}
\Omega_{\bf\gamma}:=\Phi_{\bf\gamma}(\Omega_{T({\bf\gamma})}), \quad D_{\bf\gamma}:=\Phi_{\bf\gamma}(D_{T({\bf\gamma})}),\quad \partial\Omega_{\bf \gamma}:=\theta_{\bf \gamma}(\partial\Omega_{T(\bf \gamma)}).
\end{equation}
Noting that by \eqref{RboundaryGD}, we have
\begin{equation}\label{eqeq1}
\partial\Omega_i=\bigcup_{{\bf \gamma}\in\Gamma_m(i)}\partial\Omega_{\bf \gamma},\qquad \text{for all $m\geq1$},
\end{equation}
where the union is disjoint.

Now for those $\Omega_i$ with $\Omega_i\cap V_0\neq\emptyset$, we will use the matrices $\{M_\gamma\}_{\gamma\in\Gamma(i)}$ to construct a class of positive Borel measures $\{\mu_{i,k}:\ p_k\in \Omega_i\cap V_0,1\leq k\leq Q \}$ on $\partial\Omega_i$.

\begin{definition}\label{defhmeasure}
For ${\bf \gamma}=\gamma_1\cdots\gamma_m\in\Gamma_m(i)$, write $M_{\bf\gamma}=M_{\gamma_1}\cdots M_{\gamma_m}$.
We define
\begin{equation}\label{harmonicm}
\mu_{i,k}(\partial\Omega_{\bf \gamma})={\bf e}^t_kM_{\bf \gamma}{\bf 1},
\end{equation}
where ${\bf e}_k=(0,\ldots,1,\ldots,0)^t$ is the $Q$-dimensional unit vector whose $k$-th coordinate is $1$, and ${\bf 1}$ is the $Q$-dimensional vector with all entries equal to $1$.
\end{definition}
Note that $\mu_{i,k}(\partial\Omega_{\bf \gamma})$ is the summation of the $k$-th row of $M_{\bf\gamma}$.

\medskip

Let $v=R_{\Omega_i}(\partial\Omega_i,p_k)\varphi$ as above. Let $i\in\mathcal A$, $m\geq0$.
The trace of the energy $\mathcal E_{\Omega_i}$ to $\{p_k\}\cup\{\partial\Omega_{\bf \gamma}:\ {\bf \gamma}\in\Gamma_m(i)\}$ is an electric network.  The following lemma gives the relation between the Neumann derivative $(dv)_{\partial\Omega_{\bf\gamma}}$ and $\mu_{i,k}$.

\begin{lemma}\label{lemma5.4}
For any ${\bf \gamma}\in\Gamma_m(i)$, $(dv)_{\partial\Omega_{\bf\gamma}}=-\mu_{i,k}(\partial\Omega_{\bf\gamma})$.
\end{lemma}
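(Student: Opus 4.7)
My plan is to induct on $m$.

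The base case $m=0$ is direct: $\Gamma_0(i)=\{\emptyset\}$, $M_\emptyset=I$, so $\mu_{i,k}(\partial\Omega_i)=\mathbf{e}_k^t\mathbf{1}=1$, while conservation \eqref{zerodiv} in the trace of $\mathcal{E}_{\Omega_i}$ on $\{p_k,\partial\Omega_i\}$, together with $(dv)_{p_k}=1$, forces $(dv)_{\partial\Omega_i}=-1$.

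For the inductive step I write ${\bf\eta}=\gamma{\bf\eta}^*\in\Gamma_{m+1}(i)$ with $\gamma\in\Gamma(i,j)$ and ${\bf\eta}^*\in\Gamma_m(j)$, and reduce to a computation on $\Omega_j$ via three ingredients. First, a locality and scaling step: refining the trace of $\mathcal{E}_{\Omega_i}$ to also include $\Phi_\gamma(V_0\cap\Omega_j)$, the test harmonic extension $H\mathbf{1}_{\partial\Omega_{\gamma{\bf\eta}^*}}$ vanishes outside $\overline{\Phi_\gamma(\Omega_j)}$ (by uniqueness, since it carries zero boundary data there), so compatibility of Neumann derivatives and the self-similar identity \eqref{ess} yield
\[
(dv)_{\partial\Omega_{\gamma{\bf\eta}^*}}=r_\gamma^{-1}(dw)_{\partial\Omega_{{\bf\eta}^*}},
\]
where $w:=v\circ\Phi_\gamma\in\mathcal{F}_{\Omega_j}$ and the right-hand flux is computed in the trace of $\mathcal{E}_{\Omega_j}$ to $(V_0\cap\Omega_j)\cup\{\partial\Omega_{\bf\xi}:{\bf\xi}\in\Gamma_m(j)\}$. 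Second, a harmonic splitting: since $w|_{\partial\Omega_j}=0$ and $w$ is harmonic in $\Omega_j\setminus V_0$, it admits the basis expansion $w=\sum_\ell w(p_\ell)\,u^{(j)}_\ell$ with $u^{(j)}_\ell:=H^{(V_0\cap\Omega_j)\cup\{\partial\Omega_j\}}\mathbf{1}_{p_\ell}$. Letting $v^{(j,\ell)}$ denote the analogue of $v$ on $\Omega_j$ with source $p_\ell$, the reciprocity $\mathcal{E}_{\Omega_j}(u^{(j)}_\ell,v^{(j,\ell')})=\delta_{\ell,\ell'}$ (which comes from $(dv^{(j,\ell')})_{p_\ell}=\delta_{\ell,\ell'}$, $v^{(j,\ell')}|_{\partial\Omega_j}=0$, and \eqref{DisGuass-Green}) gives, by matrix inversion, the splitting identity
\[
(dw)_q=\sum_\ell (dv^{(j,\ell)})_q\,(dw)_{p_\ell}
\]
at any trace vertex $q$ where $w$ is harmonic; I apply it at $q=\partial\Omega_{{\bf\eta}^*}$. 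Third, the matrix identification: \eqref{fluxtransmtx} together with Gauss--Green yields $(dw)_{p_\ell}=\mathcal{E}_{\Omega_j}(w,u^{(j)}_\ell)=r_\gamma M_\gamma(k,\ell)$.

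Combining these and invoking the inductive hypothesis on $(\Omega_j,v^{(j,\ell)})$ at level $m$, namely $(dv^{(j,\ell)})_{\partial\Omega_{{\bf\eta}^*}}=-\mathbf{e}_\ell^tM_{{\bf\eta}^*}\mathbf{1}$, I obtain
\[
(dv)_{\partial\Omega_{\gamma{\bf\eta}^*}}=\sum_\ell M_\gamma(k,\ell)(dv^{(j,\ell)})_{\partial\Omega_{{\bf\eta}^*}}=-\mathbf{e}_k^tM_\gamma M_{{\bf\eta}^*}\mathbf{1}=-\mu_{i,k}(\partial\Omega_{\gamma{\bf\eta}^*}),
\]
closing the induction. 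The main obstacle I expect is the locality reduction: one must check that refining the trace preserves the Neumann derivative at $\partial\Omega_{\gamma{\bf\eta}^*}$ (compatibility) and that $\Phi_\gamma(V_0\cap\Omega_j)$ truly separates $\Phi_\gamma(\Omega_j)$ from $\Omega_i\setminus\Phi_\gamma(\Omega_j)$ within the refined trace network, so that the test harmonic extension is indeed supported on $\overline{\Phi_\gamma(\Omega_j)}$. The splitting identity, though notationally heavy, reduces cleanly to the matrix duality $V^{(j)}B^{(j)}=I$ where $V^{(j)}_{\ell,\ell'}:=v^{(j,\ell)}(p_{\ell'})$ and $B^{(j)}_{\ell,\ell'}:=\mathcal{E}_{\Omega_j}(u^{(j)}_\ell,u^{(j)}_{\ell'})$.
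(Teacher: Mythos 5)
Your proof is correct, and it rests on the same two pillars as the paper's argument: the expansion of the restricted potential in the basis of unit-flux potentials $v^{(j,\ell)}$ (the paper's $\varphi_s$), and the identification of the expansion coefficients with entries of the flux transfer matrix via \eqref{fluxtransmtx} and the Gauss--Green identity \eqref{DisGuass-Green}. The organization is genuinely different, however. The paper inducts on an auxiliary claim about the fluxes at the \emph{inner} vertices $\Phi_{\bf\gamma}(p_\ell)$, peeling off the \emph{last} letter $\gamma_m$ (passing from ${\bf\gamma}^-$ to ${\bf\gamma}$), and only at the very end converts these inner fluxes into the flux at the shorted boundary piece via the conservation law \eqref{zerodiv}. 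You induct directly on the statement of the lemma and peel off the \emph{first} letter: the scaling/locality step transports the whole problem into the subdomain $\Omega_j$, where the inductive hypothesis for the potentials $v^{(j,\ell)}$ at level $m$ applies verbatim, and \eqref{zerodiv} is not needed at the end. The price is that you must justify the reduction $(dv)_{\partial\Omega_{\gamma{\bf\eta}^*}}=r_\gamma^{-1}(dw)_{\partial\Omega_{{\bf\eta}^*}}$ carefully, and your justification is sound: $\Phi_\gamma(V_0\cap\Omega_j)=F_{k'}(V_0)\cap\Omega_i$ does separate $\Phi_\gamma(\Omega_j)$ from the rest of $\Omega_i$ in the energy (distinct $1$-cells meet only in $V_1$), so the test extension $H{\bf 1}_{\partial\Omega_{\gamma{\bf\eta}^*}}$ vanishes off $\Phi_\gamma(\widetilde\Omega_j)$ by uniqueness of the energy minimizer, and the compatibility remark at the end of Section \ref{sec4} covers the refinement of the trace. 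The reciprocity $\mathcal E_{\Omega_j}(u^{(j)}_\ell,v^{(j,\ell')})=\delta_{\ell,\ell'}$, the resulting invertibility of $V^{(j)}$, and the identification $(dw)_{p_\ell}=r_\gamma M_\gamma(k,\ell)$ are all correct, so the induction closes as you claim.
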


\begin{proof}
Let $v|_{\Omega_{\bf\gamma}}$ denote the restriction of $v$ on $\Omega_{\bf\gamma}$. We claim that for any $1\leq\ell\leq Q$ with $p_\ell\in\Omega_{T({\bf\gamma})}$,
\begin{equation}\label{eqfluxdvgamma1}
\left(d(v|_{\Omega_{\bf\gamma}})\right)_{\Phi_{\bf\gamma}(p_\ell)}={\bf e}^t_kM_{\bf\gamma}{\bf e}_\ell.
\end{equation}

When $m=1$, the claim follows from the definition of $M_\gamma$.

For $m\geq2$, we write ${\bf\gamma}=\gamma_1\gamma_2\cdots\gamma_m$ and denote ${\bf\gamma}^-=\gamma_1\gamma_2\cdots\gamma_{m-1}$. 
By checking the boundary conditions, it is not hard to verify
\begin{equation}
v|_{\Omega_{\bf\gamma^-}\setminus \Phi_{\bf\gamma^-}(V_0)}=\sum_{s:p_s\in\Omega_{T({\bf\gamma}^-)}} \left(d(v|_{\Omega_{{\bf\gamma}^-}})\right)_{\Phi_{{\bf\gamma}^{-}}(p_s)}\varphi_{s}\circ \Phi^{-1}_{\gamma^-},
\end{equation}
noticing that both sides are harmonic in $\Omega_{\bf\gamma^-}\setminus \Phi_{\bf\gamma^-}(V_0)$, where
\begin{equation*}
\varphi_{s}=\begin{cases}R_{\Omega_{{\bf\gamma}^-}}(\partial\Omega_{{\bf\gamma}^-},\Phi_{{\bf\gamma}^{-}}(p_s))\cdot H^{\{\{p_{s}\},\partial\Omega_{T({\bf\gamma}^-)}\}}({\bf1}_{p_s})\quad &\text{if $p_s\in\Omega_{T({\bf\gamma}^-)},$}\\
0  &\text{if $p_s\notin\Omega_{T({\bf\gamma}^-)}.$}\end{cases}
\end{equation*}
Hence we have
\begin{equation*}
\left(d(v|_{\Omega_{\bf\gamma}})\right)_{\Phi_{\bf\gamma}(p_\ell)}=\sum_{s:p_s\in\Omega_{T({\bf\gamma}^-)}}  \left(d(v|_{\Omega_{{\bf\gamma}^-}})\right)_{\Phi_{{\bf\gamma}^{-}}(p_s)}M_{\gamma_m}(s,\ell),
\end{equation*}
which by induction yields that
\begin{equation*}
\left(d(v|_{\Omega_{\bf\gamma}})\right)_{\Phi_{\bf\gamma}(p_\ell)}=\sum_{s=1}^Q {\bf e}^t_kM_{\bf\gamma^-}{\bf e}_s M_{\gamma_m}(s,\ell)={\bf e}^t_kM_{\bf\gamma}{\bf e}_{\ell},
\end{equation*}
proving the claim.

By the definition of $\mu_{i,k}$ and using the claim, we obtain
\begin{equation*}
\mu_{i,k}(\partial\Omega_{\bf\gamma})=\sum_{\ell=1}^Q{\bf e}^t_kM_{\bf \gamma}{\bf e}_{\ell}=\sum_{\ell:p_\ell\in\Omega_{T({\bf\gamma})}}\left(d(v|_{\Omega_{\bf\gamma}})\right)_{\Phi_{\bf\gamma}(p_\ell)}=-(dv)_{\partial\Omega_{\bf\gamma}},
\end{equation*}
as desired, where the last equality is from \eqref{zerodiv}.
\end{proof}

\begin{proposition}\label{thmpm}
 For $p_k\in \Omega_i\cap V_0$, $\mu_{i,k}$ uniquely extends to a Borel probability measure on $\partial\Omega_{i}$. Moreover, we have the identity
 $$\mu_{i,k}=\sum_{\gamma\in\Gamma(i),1\leq\ell\leq Q}M_{\gamma}(k,\ell)\mu_{T(\gamma),\ell}\circ\theta_{\gamma}^{-1}.$$
\end{proposition}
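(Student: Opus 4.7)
The plan is to regard the definition \eqref{harmonicm} as prescribing the measure on the clopen cylinders $\{\partial\Omega_{\bf\gamma}\}_{{\bf\gamma}\in\Gamma_*(i)}$ of the compact metric space $\partial\Omega_i\cong(\Gamma_\infty(i),\rho)$ provided by Theorem \ref{thm3.1}, verify consistency under refinement together with nonnegativity and total mass $1$, apply Carath\'eodory to produce the Borel probability measure, and then read off the self-similar identity by a matrix computation on cylinders.

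First I would check compatibility of \eqref{harmonicm} under the refinement $\partial\Omega_{\bf\gamma}=\bigsqcup_{\gamma'\in\Gamma(T({\bf\gamma}))}\partial\Omega_{{\bf\gamma}\gamma'}$ obtained from iterating \eqref{eqeq1}. Setting $j=T({\bf\gamma})$, this amounts to
\[
{\bf e}_k^t M_{\bf\gamma}{\bf 1}={\bf e}_k^t M_{\bf\gamma}\Big(\sum_{\gamma'\in\Gamma(j)}M_{\gamma'}{\bf 1}\Big),
\]
and by \eqref{eqeq3.1} the vector $\sum_{\gamma'\in\Gamma(j)}M_{\gamma'}{\bf 1}$ has entry $1$ in every row $k'$ with $p_{k'}\in\Omega_j\cap V_0$. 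The remaining rows are irrelevant because an easy induction on $|{\bf\gamma}|$ based on \eqref{fluxtransmtx} shows that the $k'$-th column of $M_{\bf\gamma}$ is zero whenever $p_{k'}\notin\Omega_j$. Specializing to ${\bf\gamma}=\gamma\in\Gamma(i)$ and summing gives total mass $\mu_{i,k}(\partial\Omega_i)=\sum_{\gamma,\ell}M_\gamma(k,\ell)=1$, again by \eqref{eqeq3.1}. For nonnegativity I would invoke Lemma \ref{lemma5.4} to rewrite $\mu_{i,k}(\partial\Omega_{\bf\gamma})=-(dv)_{\partial\Omega_{\bf\gamma}}$, then trace $\mathcal E_{\Omega_i}$ to the finite set $\{p_k\}\cup\{\partial\Omega_{{\bf\gamma}'}:{\bf\gamma}'\in\Gamma_m(i)\}$: in this network every ``sink'' is equipotentially $0$ while the source $p_k$ has positive potential, so no current passes between distinct sinks and the current drawn into $\partial\Omega_{\bf\gamma}$ equals the nonnegative quantity $g(p_k,\partial\Omega_{\bf\gamma})\,v(p_k)$. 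Countable additivity on cylinders is then automatic: since $\partial\Omega_i$ is compact and cylinders are clopen, any disjoint cylinder decomposition of a cylinder must be finite, so Carath\'eodory's theorem extends $\mu_{i,k}$ uniquely to a Borel probability measure.

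For the self-similar identity, note that for $\gamma\in\Gamma(i)$ and ${\bf\gamma}'\in\Gamma_*(T(\gamma))$ we have $\theta_\gamma(\partial\Omega_{{\bf\gamma}'})=\partial\Omega_{\gamma{\bf\gamma}'}$, whence
\[
\mu_{i,k}(\partial\Omega_{\gamma{\bf\gamma}'})={\bf e}_k^t M_\gamma M_{{\bf\gamma}'}{\bf 1}=\sum_{\ell=1}^Q M_\gamma(k,\ell)\,{\bf e}_\ell^t M_{{\bf\gamma}'}{\bf 1}=\sum_{\ell=1}^Q M_\gamma(k,\ell)\,\mu_{T(\gamma),\ell}(\partial\Omega_{{\bf\gamma}'}),
\]
while for a cylinder $\partial\Omega_{\gamma''{\bf\gamma}'}$ with $\gamma''\neq\gamma$ the pushforward $\mu_{T(\gamma),\ell}\circ\theta_\gamma^{-1}$ annihilates it. Summing over $\gamma\in\Gamma(i)$ shows that the two Borel measures in the asserted identity agree on the generating $\pi$-system of cylinders, hence coincide. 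The convention that $\mu_{T(\gamma),\ell}$ is only defined when $p_\ell\in\Omega_{T(\gamma)}\cap V_0$ causes no trouble because $M_\gamma(k,\ell)=0$ for the excluded $\ell$. The main delicate point I expect is the zero-column bookkeeping for the product $M_{\bf\gamma}$ needed to run the compatibility step cleanly, together with packaging the equipotential-sink argument to deliver nonnegativity without circular reasoning.
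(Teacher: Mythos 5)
Your proposal is correct and follows essentially the same route as the paper: consistency of the cylinder masses via \eqref{eqeq3.1} together with the vanishing of the columns of $M_{\bf\gamma}$ indexed by $p_\ell\notin\Omega_{T({\bf\gamma})}$, extension by a standard measure-extension theorem on the compact symbolic space (the paper invokes Kolmogorov where you invoke Carath\'eodory, which is immaterial here), and the self-similar identity by the same factorization ${\bf e}_k^tM_{\gamma}M_{{\bf\gamma}'}{\bf 1}=\sum_\ell M_\gamma(k,\ell){\bf e}_\ell^tM_{{\bf\gamma}'}{\bf 1}$ on cylinders. Your explicit treatment of nonnegativity via Lemma \ref{lemma5.4} and the equipotential-sink observation is a point the paper leaves implicit, but it is consistent with the paper's earlier use of the strong maximum principle.
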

\begin{proof}
By Proposition \ref{BGDprop}(i), for ${\bf\gamma}\in\Gamma_m(i)$, $m\geq1$, it holds that $\Phi_{\bf\gamma}(\Omega_{T({\bf\gamma})}\cap V_0)\neq\emptyset$.
On the other hand, by \eqref{eqeq3.1},
\begin{equation*}
\sum_{\eta\in \Gamma(i)}\mu_{i,k}(\partial\Omega_{\eta})=\sum_{\eta\in \Gamma(i)}{\bf e}^t_kM_{\eta}{\bf 1}=\sum_{\eta\in\Gamma({i})}\sum_{\ell=1}^QM_{\eta}(k,\ell)=1,
\end{equation*}
and similarly for any ${\bf \gamma}\in \Gamma_*$ with $I({\bf\gamma})=i$,
\begin{equation*}
\sum_{\eta\in \Gamma(T(\bf\gamma))}\mu_{i,k}(\partial\Omega_{{\bf \gamma}\eta})={\bf e}^t_kM_{\bf\gamma}\sum_{\eta\in \Gamma(T(\bf\gamma))}M_{\eta}{\bf 1}={\bf e}^t_kM_{\bf\gamma}{\bf 1}=\mu_{i,k}(\partial\Omega_{\bf \gamma}).
\end{equation*}
Hence $\mu_{i,k}$ can be uniquely extended to a Borel probability measure on $\partial\Omega_i$ by the Kolmogorov extension theorem.

Moreover, for ${\bf\gamma}=\gamma_1\cdots\gamma_m\in\Gamma_*$, we have
\begin{equation}\label{eqmeasuressi}
\mu_{i,k}(\partial\Omega_{\bf \gamma})={\bf e}^t_kM_{\gamma_1}M_{\gamma_2\cdots\gamma_m}{\bf 1}=\sum_{\ell=1}^QM_{\gamma_1}(k,\ell){\bf e}^t_\ell M_{\gamma_2\cdots\gamma_m}{\bf 1}=\sum_{\ell=1}^QM_{\gamma_1}(k,\ell)\mu_{T(\gamma_1),\ell}(\theta_{\gamma_1}^{-1}(\partial\Omega_{\bf\gamma})).
\end{equation}
Summing up \eqref{eqmeasuressi} with $\gamma_1\in\Gamma(i)$, we obtain
\begin{equation*}
\mu_{i,k}=\sum_{\gamma\in\Gamma(i)}\sum_{\ell=1}^QM_{\gamma}(k,\ell)\mu_{T(\gamma),\ell}\circ\theta_{\gamma}^{-1},
\end{equation*}
which finishes the proof.
\end{proof}

We then prove that the probability measures $\{\mu_{i,k}:\ p_k\in \Omega_i\cap V_0,\ 1\leq k\leq Q\}$ are exactly the hitting probabilities associated with $\Omega_i$,  $1\leq i\leq P$. This is the main result in this section.

\begin{theorem}\label{th3.1}
For $p_k\in\Omega_i\cap V_0$, the probability measure $\mu_{i,k}$ in Definition \ref{defhmeasure} is the hitting probability from $p_k$ to the $R$-boundary $\partial\Omega_i$. Consequently, for any $f\in C(\partial\Omega_i)$, the unique harmonic function $u$ on $\Omega_i$ generated by $f$, i.e. $u|_{\partial\Omega_i}=f$, satisfies
\begin{equation}\label{Poisson}
u(p_k)=\int_{\partial\Omega_i}f(x)d\mu_{i,k}(x).
\end{equation}
\end{theorem}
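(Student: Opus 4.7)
The plan is to first verify the Poisson integral formula \eqref{Poisson} on a dense subspace of $C(\partial\Omega_i)$, namely step functions constant on the cylinder sets $\partial\Omega_{\bf\gamma}$, by means of a discrete Gauss--Green identity on a finite shorted electric network; then to pass to general continuous boundary data by uniform approximation. The hitting-probability interpretation follows as a direct corollary.

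First, fix $m \geq 1$, pick real numbers $\{c_{\bf\gamma}\}_{{\bf\gamma}\in\Gamma_m(i)}$, and let $f_m\in C(\partial\Omega_i)$ be the step function with $f_m|_{\partial\Omega_{\bf\gamma}} = c_{\bf\gamma}$; let $u_m \in \mathcal F_{\Omega_i}$ be its harmonic extension to $\widetilde\Omega_i$. Set $v := R_{\Omega_i}(\partial\Omega_i,p_k)\varphi$ as in \eqref{vk}, so that $(dv)_{p_k}=1$, $v|_{\partial\Omega_i}=0$, and $v$ is harmonic on $\Omega_i\setminus\{p_k\}$. Both $u_m$ and $v$ take constant values on each $\partial\Omega_{\bf\gamma}$ and on $\{p_k\}$, so they lie in the trace space attached to
\begin{equation*}
G_m := \{p_k\} \cup \{\partial\Omega_{\bf\gamma}: {\bf\gamma}\in\Gamma_m(i)\},
\end{equation*}
where each $\partial\Omega_{\bf\gamma}$ is identified with a single shorted vertex. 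Applying \eqref{DisGuass-Green} in the electric network $(G_m,\mathcal E_{G_m})$ gives
\begin{equation*}
\sum_{p \in G_m} u_m(p)(dv)_p = \sum_{p \in G_m} v(p)(du_m)_p.
\end{equation*}
The right-hand side vanishes, since $v(\partial\Omega_{\bf\gamma}) = 0$ for every $\bf\gamma$ and $(du_m)_{p_k} = 0$; the latter holds because the harmonic extension $H\chi_{p_k}$ from $G_m$ of the indicator at $p_k$ satisfies $H\chi_{p_k}|_{\partial\Omega_i}=0$ and hence is a valid test function for the harmonicity of $u_m$, so by trace compatibility $(du_m)_{p_k} = \mathcal E_{\Omega_i}(u_m,H\chi_{p_k})=0$. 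On the left-hand side, $(dv)_{p_k}=1$ and Lemma~\ref{lemma5.4} gives $(dv)_{\partial\Omega_{\bf\gamma}} = -\mu_{i,k}(\partial\Omega_{\bf\gamma})$, whence
\begin{equation*}
u_m(p_k) = \sum_{{\bf\gamma}\in\Gamma_m(i)} c_{\bf\gamma}\, \mu_{i,k}(\partial\Omega_{\bf\gamma}) = \int_{\partial\Omega_i} f_m\, d\mu_{i,k}.
\end{equation*}

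Next, for general $f \in C(\partial\Omega_i)$, I would choose sample points $y_{\bf\gamma}\in\partial\Omega_{\bf\gamma}$ and set $c_{\bf\gamma} := f(y_{\bf\gamma})$. The uniform bound $\mathrm{diam}_{R_{\Omega_i}}(\partial\Omega_{\bf\gamma}) \lesssim r_{\max}^m$ (extracted from the proof of Theorem~\ref{thm3.1}), together with compactness of $(\partial\Omega_i,R_{\Omega_i})$ and uniform continuity of $f$, yields $\|f_m - f\|_\infty \to 0$ as $m\to\infty$. The maximum principle applied to the harmonic function $u - u_m$ on $\widetilde\Omega_i$ (whose $R$-boundary values are $f - f_m$) gives $\|u - u_m\|_\infty \leq \|f - f_m\|_\infty \to 0$, hence $u_m(p_k)\to u(p_k)$; simultaneously $\int f_m\,d\mu_{i,k} \to \int f\,d\mu_{i,k}$ by uniform convergence against the probability measure $\mu_{i,k}$, which proves \eqref{Poisson}. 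Since the linear functional $f\mapsto\int f\,d\mu_{i,k}$ coincides with the defining property of the hitting distribution of the Hunt diffusion on $\widetilde\Omega_i$ associated with $(\mathcal E_{\Omega_i},\mathcal F_{\Omega_i})$ from $p_k$ to $\partial\Omega_i$, the Riesz representation theorem identifies $\mu_{i,k}$ with that hitting probability.

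The most delicate step is the verification that $(du_m)_{p_k}=0$ in the shorted trace network $G_m$: it rests on compatibility of Neumann derivatives between the resistance form $(\mathcal E_{\Omega_i},\mathcal F_{\Omega_i})$ on the infinite-dimensional space $\widetilde\Omega_i$ and the finite-dimensional trace $(\mathcal E_{G_m},\ell(G_m))$, exploiting that $u_m$ is actually harmonic at the interior vertex $p_k$ and not merely on $\Omega_i\setminus\{p_k\}$. The remaining ingredients---Lemma~\ref{lemma5.4}, the Gauss--Green identity \eqref{DisGuass-Green}, and the resistance-form maximum principle---are all in place from the preceding sections.
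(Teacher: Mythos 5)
Your proposal is correct and follows essentially the same route as the paper: prove \eqref{Poisson} for step functions constant on the cylinder sets $\partial\Omega_{\bf\gamma}$ via the discrete Gauss--Green identity \eqref{DisGuass-Green} on the shorted trace network $\{p_k\}\cup\{\partial\Omega_{\bf\gamma}:{\bf\gamma}\in\Gamma_m(i)\}$, using $(dv)_{p_k}=1$, $v|_{\partial\Omega_i}=0$, $(du)_{p_k}=0$, and Lemma~\ref{lemma5.4}, then pass to general $f\in C(\partial\Omega_i)$ by uniform approximation. Your write-up merely fills in two details the paper leaves implicit (the justification of $(du_m)_{p_k}=0$ via trace compatibility, and the quantitative approximation using the diameter bound from Theorem~\ref{thm3.1} plus the maximum principle), which is consistent with the paper's argument.
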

\begin{proof}
We prove the result only when $f$ is a simple function on $\partial\Omega_i$, since the general case will follow by approximating with simple functions.
Let $m\geq1$ be an integer, assume that $f$ is of the form
\begin{equation}\label{expf}
f=\sum_{{\bf \gamma}\in\Gamma_m(i)}f_{\bf\gamma}1_{\partial\Omega_{\bf \gamma}},\qquad f_{\bf \gamma}\in\mathbb{R}.
\end{equation}
Then $f$ is continuous on $\partial \Omega_i$.
Let $u$ be the unique harmonic extension of $f$ on $\Omega_i$.
Let $v=R_{\Omega_i}(\partial\Omega_i,p_k)\varphi$, with $\varphi$ being the realization of $R_{\Omega_i}(\partial\Omega_i,p_k)$ as above.
Notice that both $u$ and $v$ are harmonic in $\Omega_i\setminus\{p_k\}$ and take finitely many different values on the boundary $\partial\Omega_i$. 
The trace of the energy $\mathcal E_{\Omega_i}$ to $\{p_k\}\cup\{\partial\Omega_{\bf \gamma}:\ {\bf \gamma}\in\Gamma_m(i)\}$ is an electric network, and thus we can apply \eqref{DisGuass-Green} with $u$ and $v$ to obtain
\begin{equation*}
\sum_{{\bf \gamma}\in\Gamma_m(i)}u(\partial\Omega_{\bf\gamma})(dv)_{\partial\Omega_{\bf\gamma}}+u(p_k)(dv)_{p_k}=\sum_{{\bf \gamma}\in\Gamma_m(i)}v(\partial\Omega_{\bf\gamma})(du)_{\partial\Omega_{\bf\gamma}}+v(p_k)(du)_{p_k}=0,
\end{equation*}
where in the last equality we use that $v=0$ on $\partial\Omega_i$ and $(du)_{p_k}=0$ by the harmonicity of $u$ at $p_k$. Then by $(dv)_{p_k}=1$ and \eqref{expf}, we obtain from above that
\begin{equation}\label{eqforf}
\sum_{{\bf \gamma}\in\Gamma_m(i)}f_{\bf\gamma}(dv)_{\partial\Omega_{\bf\gamma}}+u(p_k)=0.
\end{equation}
By Lemma \ref{lemma5.4} and \eqref{eqforf}, we obtain
\begin{equation*}
u(p_k)=\sum_{{\bf \gamma}\in\Gamma_m(i)}f_{\bf\gamma}\mu_{i,k}(\partial\Omega_{\bf\gamma}),
\end{equation*}
proving that \eqref{Poisson} holds for any simple function $f$.
\end{proof}

{\it \noindent{\bf Remark 1.} Recall the first remark after Theorem \ref{thm3.1},  a function $f\in C(D_i)$ naturally induces a function $f\circ\iota_i\in C(\partial\Omega_i)$. In this way, the harmonic function generated by $f\circ\iota_i$ can be viewed as a harmonic extension of $f$ from $D_i$ to $\Omega_i$.

\medskip

\noindent{\bf Remark 2.} In Theorem \ref{th3.1}, if $u$ is harmonic in $\Omega_i\setminus V_0$, similarly to the proof of \eqref{Poisson}, for $p_k\in\Omega_i\cap V_0$, we have
\begin{equation}\label{Poissonex}
u(p_k)=\int_{\partial\Omega_i}f(x)d\mu_{i,k}(x)+\sum_{x\in \Omega_i\cap V_0}v(x)(du)_x,
\end{equation}
where $v=R_{\Omega_i}(\partial\Omega_i,p_k)\varphi$ with $\varphi$ being the realization of $R_{\Omega_i}(\partial\Omega_i,p_k)$.
}

\medskip

The following property says that for fixed $i$, the measures $\mu_{i,k}$ are actually equivalent for different $p_k\in\Omega_i$. For convenience, we will also write the measure $\mu_{i,k}$ as $\mu_{i,p}$ if we denote $p_k$ by $p$.
\begin{proposition}\label{thmeqmea}
Let $i\in\mathcal A$, assume $p,p'\in \Omega_i\cap V_0$ and let $\mu_{i,p},\mu_{i,p'}$ be the associated probability measures. Then there exists a constant $C>0$ such that for any Borel set $E\subset \partial\Omega_i$,
\begin{equation}\label{eqmea}
C^{-1}\mu_{i,p}(E)\leq \mu_{i,p'}(E)\leq C\mu_{i,p}(E).
\end{equation}
\end{proposition}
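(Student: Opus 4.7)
The plan is to reduce the claim to a Harnack-type inequality for non-negative harmonic functions on $\Omega_i$ evaluated at points of $V_0 \cap \Omega_i$, transfer the comparison to the hitting measures via Theorem \ref{th3.1} applied to indicators of cylinder sets, and then extend to all Borel sets by a $\pi$-system argument. Since $V_0 \cap \Omega_i$ is finite, all the pairwise constants that arise can be absorbed into a single $C$.

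For the Harnack step, I would fix $q \in V_0 \cap \Omega_i$ and construct the function $\psi_q$ on $\widetilde\Omega_i$ characterized by $\psi_q|_{\partial\Omega_i} = 0$, $\psi_q(q) = 1$, and $\psi_q$ harmonic on $\Omega_i \setminus \{q\}$; existence and uniqueness follow from the energy-minimizing description of harmonic extensions in the resistance form $(\mathcal E_{\Omega_i},\mathcal F_{\Omega_i})$ on the compact space $\widetilde\Omega_i$ (compactness given by Theorem \ref{thm3.1}). By the strong maximum principle (applied exactly as in the proof of the first proposition of Section \ref{sec5}), $\psi_q > 0$ on $\Omega_i$, so $a_{p,q} := \psi_q(p) > 0$ for every other $p \in V_0 \cap \Omega_i$. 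Given a continuous non-negative $f$ on $\partial\Omega_i$ with harmonic extension $u$, set
\[
w := u - u(p')\,\psi_{p'}.
\]
Then $w$ is harmonic on $\Omega_i \setminus \{p'\}$, and $w(p') = 0$, $w|_{\partial\Omega_i} = f \geq 0$. The minimum principle on $\widetilde\Omega_i \setminus \{p'\}$ (whose topological boundary inside $\widetilde\Omega_i$ is $\{p'\} \cup \partial\Omega_i$) forces $w \geq 0$ on $\widetilde\Omega_i$, whence $u(p) \geq a_{p,p'}\,u(p')$. Swapping $p$ and $p'$ and taking the minimum of the finitely many $a_{\cdot,\cdot}$ produces a single constant $C$ for which $C^{-1}u(p') \leq u(p) \leq C\,u(p')$ uniformly in such $u$.

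To transfer this to measures, I note that each cylinder $\partial\Omega_{\bf\gamma}$ with ${\bf\gamma}\in\Gamma_*(i)$ is clopen in $\partial\Omega_i$, by the homeomorphism with $\Gamma_\infty(i)$ from Theorem \ref{thm3.1}; hence $1_{\partial\Omega_{\bf\gamma}} \in C(\partial\Omega_i)$. Applying Theorem \ref{th3.1} with $f = 1_{\partial\Omega_{\bf\gamma}}$ gives $u(p_k) = \mu_{i,k}(\partial\Omega_{\bf\gamma})$, so the Harnack bound above immediately yields
\[
C^{-1}\mu_{i,p'}(\partial\Omega_{\bf\gamma}) \leq \mu_{i,p}(\partial\Omega_{\bf\gamma}) \leq C\,\mu_{i,p'}(\partial\Omega_{\bf\gamma})
\]
for every ${\bf\gamma}\in\Gamma_*(i)$. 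The family $\{\partial\Omega_{\bf\gamma}\}_{{\bf\gamma}\in\Gamma_*(i)}$ is a $\pi$-system (the intersection of two cylinders is either empty or the finer cylinder) generating the Borel $\sigma$-algebra on $\partial\Omega_i$, so the comparison extends to every Borel $E$ via a standard monotone class argument applied to the non-negative Borel measures $C\mu_{i,p'} - \mu_{i,p}$ and $C\mu_{i,p} - \mu_{i,p'}$.

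The main obstacle is rigorously justifying the minimum principle for $w$ in this setting: $w$ is harmonic only away from $\{p'\} \cup \partial\Omega_i$, and $\partial\Omega_i$ is an infinite fractal boundary rather than a finite vertex set. One natural route is to invoke the general resistance-form theory on the compact space $(\widetilde\Omega_i, R_{\Omega_i})$, where harmonic extensions minimize energy subject to boundary values and the usual weak minimum principle applies. A more concrete alternative, matching the style of Theorem \ref{th3.1}, is to trace $\mathcal E_{\Omega_i}$ to the finite set $(V_0\cap\Omega_i) \cup \{\partial\Omega_{\bf\eta} : {\bf\eta}\in\Gamma_n(i)\}$ (with each boundary piece shorted to a single vertex) for $n$ large enough that $u(p')\psi_{p'}$ and $u$ are locally constant on each shorted piece to the required precision, and then apply the elementary discrete minimum principle on the resulting finite electric network before passing to the limit.
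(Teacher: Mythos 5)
Your proof is correct, but the central Harnack step is carried out by a genuinely different mechanism from the paper's. Both arguments reduce \eqref{eqmea}, via Theorem \ref{th3.1} applied to $f={\bf 1}_{\partial\Omega_{\bf\gamma}}$, to the uniform two-point inequality $u_{\bf\gamma}(p)\asymp u_{\bf\gamma}(p')$ for the harmonic extensions of indicators of cylinders, and then pass to general Borel sets. The paper proves this inequality by chaining: using Lemma \ref{connectedlemma} it encloses $p$ and $p'$ in a fixed finite union $A'$ of $n_1$-cells inside $\Omega_i$, writes $u_{\bf\gamma}(p)=\sum_s w_s u_{\bf\gamma}(q_s)$ and $u_{\bf\gamma}(p')=\sum_s w'_s u_{\bf\gamma}(q_s)$ with strictly positive weights depending only on the resistance form and $A'$ (not on ${\bf\gamma}$), and concludes with $C=\max_s w_s/w'_s$ using positivity of the $u_{\bf\gamma}(q_s)$ from the strong maximum principle. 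You instead use the barrier $\psi_{p'}$ --- which is exactly the function $\varphi$ already appearing in Section \ref{sec5} as the realization of $R_{\Omega_i}(\partial\Omega_i,p')$ --- together with the weak minimum principle for the harmonic extension of $w$ from the closed set $\partial\Omega_i\cup\{p'\}$ in $\widetilde\Omega_i$. Your route is shorter and gives the comparison for arbitrary non-negative boundary data in one stroke, but it leans on the maximum principle over an infinite boundary set in the completed resistance space (available from \cite{Kig12}, as you note), whereas the paper's chaining only ever invokes harmonicity on a fixed finite electric network; of your two proposed justifications, the trace to $(V_0\cap\Omega_i)\cup\{\partial\Omega_{\bf\eta}:\ {\bf\eta}\in\Gamma_n(i)\}$ is the cleaner one here, since for $f={\bf 1}_{\partial\Omega_{\bf\gamma}}$ the function $w$ is already constant on each shorted piece once $n\geq|{\bf\gamma}|$, so no approximation is needed. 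One small repair: the final extension to Borel $E$ is better phrased via outer regularity (every open subset of $\partial\Omega_i$ is a countable disjoint union of cylinders, so the inequality passes to open sets and then to all Borel sets), since a finite signed measure that is merely non-negative on a generating $\pi$-system need not be non-negative; the paper itself asserts the reduction to cylinders without comment.
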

\begin{proof}
Without loss of generality, we may assume $E=\partial\Omega_{\bf\gamma}$ for some ${\bf\gamma}\in\Gamma_m(i)$, $m\geq1$. Let $u_{\bf\gamma}$ be the harmonic function in $\Omega_i$ with boundary values
\begin{equation*}
u_{\bf\gamma}=\begin{cases}
& 1\quad\text{ in $\partial\Omega_{\bf\gamma}$,}\\
& 0\quad\text{ in $\partial\Omega_i\setminus \partial\Omega_{\bf\gamma}$}.
\end{cases}
\end{equation*}
Then $u_{\bf\gamma}>0$ in $\Omega_i$ by the strong maximum principle (e.g. \cite[Theorem 4.1]{Kuw}). Note that by Theorem \ref{th3.1}, $\mu_{i,p}(E)=u_{\bf\gamma}(p)$ and $\mu_{i,p'}(E)=u_{\bf\gamma}(p')$. Then by Lemma \ref{connectedlemma}, we pick $n_1\geq1$ such that $p$ and $p'$ are connected by a chain of $n_1$-cells in $\Omega_i$ and we denote the union of these cells by $A$. Now we consider
\begin{equation*}
A'=\bigcup_{\omega\in\Sigma^{n_1},F_{\omega}(K)\cap A\neq\emptyset}F_{\omega}(K)
\end{equation*}
and may assume that $A'\subset \Omega_i$ by choosing $n_1$ sufficiently large. The set
\begin{equation*}
\bigcup_{\omega\in\Sigma^{n_1},F_{\omega}(K)\cap A\neq\emptyset}F_{\omega}(V_0)\setminus A
\end{equation*}
 is a nonempty finite set in $V_{n_1}$, denoted as $\{q_1,\ldots,q_{\ell}\}$. Then $A'\setminus\{q_1,\ldots,q_\ell\}$ is connected.
Since $u_{\bf\gamma}$ is harmonic in $\Omega_i$, by viewing $\{q_1,\ldots,q_{\ell}\}$ as the boundary of $A'$, we see that there is a positive probability vector $(w_{1},\ldots,w_{\ell})$ such that
\begin{equation}\label{measurek}
u_{\bf\gamma}(p)=\sum_{s=1}^{\ell}w_{s}u_{\bf\gamma}(q_s),
\end{equation}
 where $\sum_{s=1}^{\ell}w_{s}=1$ and $w_{s}>0$ depending only on the resistance form and $A'$. Similarly, there is a positive probability vector $(w'_{1},\ldots,w'_{\ell})$ such that
\begin{equation}\label{measurek'}
 u_{\bf\gamma}(p')=\sum_{s=1}^{\ell}w'_{s}u_{\bf\gamma}(q_s).
\end{equation}
  Now since $q_s\in \Omega_i$, we have $u_{\bf\gamma}(q_s)>0$. Combining \eqref{measurek} and \eqref{measurek'}, we see at
\begin{equation*}
\min_{1\leq s\leq \ell}\frac{w_{s}}{w'_{s}}\leq \frac{u_{\bf\gamma}(p)}{u_{\bf\gamma}(p')}\leq\max_{1\leq s\leq \ell}\frac{w_{s}}{w'_{s}},
\end{equation*}
which implies \eqref{eqmea}.
\end{proof}

\section{\bf Energy estimates}\label{sec6}
\setcounter{equation}{0}\setcounter{theorem}{0}

In this section, we characterize harmonic functions in $\Omega$ with finite energy in terms of their boundary values.

Let $\{(\Omega_i,D_i)\}_{i=1}^P$ be domains in a p.c.f. self-similar set {$(K,\{F_i\}_{i=1}^N,V_0)$} satisfying the BGD condition (\ref{graphdirected}) and $(\mathcal E,\mathcal F)$ be a self-similar resistance form with energy renormalizing factors $\{r_i\}_{i=1}^N$, $0<r_i<1$.

Before proceeding, we give a property of the energies of harmonic functions. 
\begin{lemma}\label{lemma6.1}
Let $u$ be a harmonic function in $K\setminus V_0$. We have
\begin{equation*}
\mathcal E[u]\asymp\sum_{p,q\in V_0}|u(p)-u(q)|^2\asymp\sum_{p\in V_0}\left|(du)_p\right|^2,
\end{equation*}
where the positive constants in the two ``$\asymp$"'s are independent of $u$.
\end{lemma}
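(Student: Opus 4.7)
The plan is to view all three quantities as non-negative quadratic forms on the finite-dimensional space
$$\mathcal H := \{u \in \mathcal F : u \text{ is harmonic in } K \setminus V_0\},$$
and then invoke the equivalence of norms on a finite-dimensional vector space once I verify these forms share the same null space.

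First I would observe that restriction $u \mapsto u|_{V_0}$ is a linear bijection $\mathcal H \to \ell(V_0)$, so $\dim \mathcal H = Q$. Each of the three expressions $\mathcal E[u]$, $\sum_{p,q \in V_0} |u(p)-u(q)|^2$, and $\sum_{p \in V_0} |(du)_p|^2$ is a non-negative quadratic form in $u \in \mathcal H$, so it suffices to show that each vanishes exactly on the one-dimensional subspace of constant functions.

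The first two are immediate: $\mathcal E[u]=0$ forces $u$ to be constant on the connected set $K$, while $\sum_{p,q}|u(p)-u(q)|^2=0$ forces $u|_{V_0}$ to be constant and thus, by the bijection above, $u$ itself to be constant. The main step is the third, and here I would apply the discrete Gauss--Green identity: since $u \in \mathcal H$ is harmonic in $K\setminus V_0$, the trace of $\mathcal E$ to $V_0$ gives
$$\mathcal E[u] = \mathcal E(u,u) = \sum_{p \in V_0} u(p)\,(du)_p.$$
If every $(du)_p$ vanishes then $\mathcal E[u]=0$, whence $u$ is constant by the previous case; the converse is trivial.

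Having shown the three forms share the same one-dimensional kernel on the $Q$-dimensional space $\mathcal H$, they descend to three positive-definite quadratic forms on the finite-dimensional quotient $\mathcal H/\mathbb R$. Any two positive-definite quadratic forms on a finite-dimensional space are equivalent, yielding universal constants depending only on $(D,\mathbf r)$ and not on $u$. I anticipate no serious obstacle; the only point requiring some care is justifying the Gauss--Green identity in the present resistance-form framework, but this is a standard consequence of the definition of $(du)_p$ via the trace of $\mathcal E$ to $V_0$ together with the harmonicity of $u$ in $K \setminus V_0$.
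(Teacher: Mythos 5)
Your proposal is correct and follows essentially the same route as the paper, which simply notes that all three quantities are squares of norms on $\ell(V_0)$ modulo constants; you have merely filled in the details (identifying harmonic functions with $\ell(V_0)$, checking the common kernel via the Gauss--Green identity, and invoking equivalence of norms in finite dimensions). No gaps.
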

\begin{proof}
The lemma follows from the fact that both of the last two terms are (square of) norms on $\ell(V_0)$ modulo constants.
\end{proof}


Let $i\in\{1,\ldots,P\}$. For a harmonic function $u$ on $\Omega_i$ with boundary value $f$ on $\partial\Omega_i$,
our purpose is to estimate $\mathcal E_{\Omega_i}[u]$ from above and below in terms of $f$.

For two words $\mathbf{\eta},\mathbf{\xi}\in \Gamma_m(i)$ with $m\geq1$, we write ${\bf\eta}\sim{\bf\xi}$ if $[{\bf\eta}]_{m-1}=[{\bf\xi}]_{m-1}={\bf\gamma}$ for some ${\bf\gamma}\in\Gamma_{m-1}(i)$ (we also write ${\bf\eta}^{-}={\bf\xi}^{-}={\bf\gamma}$). Note that the
two $m$-cells $\Phi_{\bf\eta}(K)$ and $\Phi_{\bf\xi}(K)$ are contained in the same $(m-1)$-cell $\Phi_{\bf\gamma}(K)$. 
In the following, we denote $V^{(\bf\gamma)}:=\ V_0\cap \Omega_{T({\bf \gamma})}$ and $r_{\bf\gamma}=r_{\omega}$ with the unique $\omega\in\Sigma^{m-1}$ satisfying $F_{\omega}=\Phi_{\bf\gamma}$.

For $f\in C(\partial\Omega_i)$ and $p\in V^{(\bf\gamma)}$, we denote
\begin{equation*}
f_{{\bf\gamma},p}=\int_{\partial\Omega_{T({\bf\gamma})}}f\circ\theta_{\bf\gamma}d\mu_{T({\bf\gamma}),p}.
\end{equation*}
Our main result in this section is the following.
\begin{theorem}\label{enes}
Let $(K,\{F_i\}_{i=1}^N,V_0)$ be a p.c.f. self-similar set equipped with a self-similar resistance form $(\mathcal E,\mathcal F)$ with energy renormalizing factors $\{r_i\}_{i=1}^N$, $0<r_i<1$. Let $\{(\Omega_i,D_i)\}_{i=1}^P$ be a finite number of domains in $K$ satisfying the BGD condition. Assume $i\in\{1,\ldots,P\}$ and $f\in C(\partial\Omega_i)$, let $u$ be the unique harmonic extension of $f$ in $\Omega_i$. Then
\begin{equation}\label{energyestimate}
\mathcal E_{\Omega_i}[u]\asymp \sum_{m=0}^{\infty}\sum_{{\bf\gamma}\in\Gamma_m(i)}\frac1{r_{\bf\gamma}}\sum_{{\bf\xi},{\bf\eta}:\ {\bf\xi}^{-}={\bf\eta}^{-}={\bf\gamma}}\sum_{p\in V^{({\bf\xi})},\ q\in V^{{(\bf\eta)}}}\left(f_{{\bf\xi},p}-f_{{\bf\eta},q}\right)^2,
\end{equation}
where the constant in ``$\asymp$" does not depend on $u$ or $f$.
\end{theorem}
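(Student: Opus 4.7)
The plan is to exploit the self-similar structure to reduce both sides of (\ref{energyestimate}) to infinite sums indexed by BGD admissible words and then match them term by term.

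First, using the BGD decomposition of $\Omega_i$ into interior cells $F_k(K)\subset\Omega_i$ (indexed by $K_i:=\{k:F_k(K)\subset\Omega_i\}$) and boundary subcells $\Omega_\gamma=\Phi_\gamma(\Omega_{T(\gamma)})$ for $\gamma\in\Gamma(i)$, together with the self-similarity (\ref{ess}), I obtain the recursion
\begin{equation*}
\mathcal{E}_{\Omega_i}[u]=T_i(u)+\sum_{\gamma\in\Gamma(i)}\frac{1}{r_\gamma}\mathcal{E}_{\Omega_{T(\gamma)}}[u\circ\Phi_\gamma],\quad T_i(u):=\sum_{k\in K_i}\frac{1}{r_k}\mathcal{E}[u\circ F_k].
\end{equation*}
Using the identity $f_{\gamma_1{\bf\gamma}',p}=(f\circ\theta_{\gamma_1})_{{\bf\gamma}',p}$ together with a reindexing, one verifies that the right-hand side of (\ref{energyestimate}) satisfies the parallel recursion $\mathrm{RHS}_i(f)=S_0(f,i)+\sum_{\gamma\in\Gamma(i)}r_\gamma^{-1}\mathrm{RHS}_{T(\gamma)}(f\circ\theta_\gamma)$, where $S_0(f,i)$ denotes the $m=0$ summand. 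Iterating both recursions and using $\mathcal{E}_{\Omega_i}[u]<\infty$ to send the depth tail to zero yields the closed-form expansions
\begin{equation*}
\mathcal{E}_{\Omega_i}[u]=\sum_{m=0}^\infty\sum_{{\bf\gamma}\in\Gamma_m(i)}\frac{1}{r_{\bf\gamma}}T_{T({\bf\gamma})}(u\circ\Phi_{\bf\gamma}),\qquad\mathrm{RHS}_i(f)=\sum_{m=0}^\infty\sum_{{\bf\gamma}\in\Gamma_m(i)}\frac{1}{r_{\bf\gamma}}S_0(f\circ\theta_{\bf\gamma},T({\bf\gamma})).
\end{equation*}

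For the upper bound I establish the uniform pointwise inequality $T_j(v)\leq C_1 S_0(g,j)$ valid for every $j\in\mathcal{A}$ and every $v$ harmonic on $\Omega_j$ with boundary value $g$. By Lemma \ref{lemma6.1}, $T_j(v)$ is comparable to a positive semidefinite quadratic form in $v|_{W_j}$, where $W_j=\bigcup_{k\in K_j}F_k(V_0)$. Set $A_j=\bigcup_{\gamma\in\Gamma(j)}\Phi_\gamma(V^{(\gamma)})$ and $B_j=W_j\setminus A_j$. Any $q\in B_j$ lies only in interior cells (if $q$ were a shared vertex of an interior cell with some boundary subcell $\Omega_\gamma$, BGD would automatically place it in $\Phi_\gamma(V^{(\gamma)})\subset A_j$), so the harmonicity of $v$ at $q$ involves only interior-cell edges and expresses $v|_{B_j}$ as a linear function of $v|_{W_j\cap A_j}$. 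Thus $T_j$ becomes a PSD quadratic form in $v|_{W_j\cap A_j}$ vanishing exactly on constants, and the finite-dimensional equivalence of norms on $\ell(W_j\cap A_j)$ modulo constants (uniform over the finite index set $\mathcal{A}$) yields
\begin{equation*}
T_j(v)\lesssim\sum_{x,y\in W_j\cap A_j}(v(x)-v(y))^2\leq\sum_{x,y\in A_j}(v(x)-v(y))^2\lesssim S_0(g,j).
\end{equation*}
Summing this inequality termwise along the closed-form expansion gives $\mathcal{E}_{\Omega_i}[u]\leq C_1\mathrm{RHS}_i(f)$ with no accumulation of constants.

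For the lower bound, the resistance inequality $(v(x)-v(y))^2\leq R_{\Omega_j}(x,y)\mathcal{E}_{\Omega_j}[v]$, combined with the uniform boundedness of the $R_{\Omega_j}$-diameters (Proposition \ref{lemma4.2}) and the finiteness of $|A_j|$, yields the coarse estimate $S_0(g,j)\leq C_0\mathcal{E}_{\Omega_j}[v]$; coupled with $\mathcal{E}_{\Omega_j}[v]\geq\sum_\gamma r_\gamma^{-1}\mathcal{E}_{\Omega_{T(\gamma)}}[v\circ\Phi_\gamma]$ this produces the one-step estimate
\begin{equation*}
\mathcal{E}_{\Omega_j}[v]\geq\frac{1}{2C_0}\left(S_0(g,j)+\sum_{\gamma\in\Gamma(j)}\frac{1}{r_\gamma}\mathcal{E}_{\Omega_{T(\gamma)}}[v\circ\Phi_\gamma]\right).
\end{equation*}
The hard part will be converting this one-step inequality into the uniform global bound $\mathcal{E}_{\Omega_i}[u]\geq c_0\mathrm{RHS}_i(f)$: naive iteration degrades the constant geometrically with depth, and one cannot hope for a termwise $T_j\gtrsim S_0$ since, for example, $K_j=\emptyset$ forces $T_j=0$ while $S_0(g,j)$ can be positive. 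My plan to overcome this is a fixed-point argument on the finite vertex set $\mathcal{A}$: setting $c^{*}:=\inf_{j,g}\mathcal{E}_{\Omega_j}[v_g]/\mathrm{RHS}_j(g)$ (infimum over non-constant boundary data $g$) and re-inserting the recursive identities into the one-step estimate shows that any near-minimizer must shift weight toward arbitrarily deep BGD levels; the finiteness of $\mathcal{A}$ together with the geometric decay $r_{\bf\gamma}\to 0$ forces a recurrence of vertex indices that precludes $c^{*}=0$. Combined with the upper bound, this yields the claimed equivalence and completes the proof.
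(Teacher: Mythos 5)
Your argument rests on treating $f_{{\bf\xi},p}$ as if it were the value $u(\Phi_{\bf\xi}(p))$, and this identification is false: $f_{{\bf\xi},p}$ is the value at $p$ of the harmonic extension of $f\circ\theta_{\bf\xi}$ \emph{inside} $\Omega_{T({\bf\xi})}$, whereas $u\circ\Phi_{\bf\xi}$ is harmonic only in $\Omega_{T({\bf\xi})}\setminus V_0$ and carries nonzero flux through the ports $V^{({\bf\xi})}$; the two differ by the Neumann--derivative correction of \eqref{eq6.24}. This breaks your upper bound in two places. First, the final link of your chain, $\sum_{x,y\in A_j}(v(x)-v(y))^2\lesssim S_0(g,j)$, is exactly the nontrivial step (it compares actual vertex values of $v$ with averaged boundary data) and is asserted without proof. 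Second, and fatally, summing the termwise inequality ``along the closed-form expansion'' requires $T_{T({\bf\gamma})}(u\circ\Phi_{\bf\gamma})\lesssim S_0(f\circ\theta_{\bf\gamma},T({\bf\gamma}))$ at every depth, with $u\circ\Phi_{\bf\gamma}$ in place of a genuine harmonic extension. This is false: take $K=SG$, $\Omega=K\setminus\overline{p_1p_2}$, and $f$ with $f\circ\theta_{\gamma_1}\equiv0$ but $f\circ\theta_{\gamma_2}\not\equiv0$. Then $u\circ F_1=\alpha\psi$ with $\alpha=u(F_1(p_3))\neq0$ and $\psi$ the function harmonic in $\Omega\setminus\{p_3\}$ with $\psi(p_3)=1$, $\psi|_{\partial\Omega}=0$; one checks $\psi$ is nonconstant on the interior cell $F_3(K)$, so $T_1(u\circ F_1)>0$ while $S_0(f\circ\theta_{\gamma_1},1)=0$. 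The incoming flux at the ports produces interior energy at depth $m$ that is not controlled by the depth-$m$ summand of the right-hand side.

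The lower bound is, by your own admission, not proved: the ``fixed-point argument on $\mathcal A$'' is a hope, not an argument (the infimum $c^*$ ranges over an infinite-dimensional set of boundary data with no compactness, and no mechanism is given that actually precludes $c^*=0$). The reason naive iteration degrades is the same confusion as above: your one-step estimate $S_0(g,j)\le C_0\mathcal E_{\Omega_j}[v]$ charges the \emph{entire} energy of $v$ on $\Omega_j$ to level $0$, so the nested levels are charged infinitely often. The paper's proof avoids both problems. For ``$\lesssim$'' it does not expand $u$ at all; it builds an explicit competitor $h$ whose values on the mesh $T_m$ are \emph{defined} to be the $f_{{\bf\gamma},p}$, estimates $\mathcal E_{\Omega^m}[h]$ by differences of these numbers via Lemma \ref{lemma6.1}, and uses $\mathcal E_{\Omega_i}[u]\le\mathcal E_{\Omega_i}[h]$. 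For ``$\gtrsim$'' it uses \eqref{Poissonex} to write $f_{{\bf\xi},p}-u^{({\bf\xi})}(p)$ in terms of Neumann derivatives, bounds these by the energy of $u$ on an annulus $\bigcup_{k=m}^{m+n_1}\Omega^k$ of bounded combinatorial width, and sums over $m$ using the bounded overlap of these annuli. Any repair of your scheme would have to reproduce this bookkeeping of the port fluxes; as written, both directions have genuine gaps.
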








\begin{proof}
We first show ``$\lesssim$" in \eqref{energyestimate}. For a given $f\in C(\partial\Omega_i)$, we will construct a continuous function $h$ in $\widetilde\Omega_i$ such that $h=f$ on $\partial\Omega_i$.

For $m\geq1$, let $\mathcal W_m=\{\omega\in \Sigma^m :\ F_{\omega}(K)\subset \Omega_i, F_{\omega^-}(K)\nsubset\Omega_i\}$ ($\omega^{-}=[\omega]_{m-1}$) and $\mathcal W_*=\bigcup_{m=1}^\infty \mathcal W_m$.
Define $\Omega^m=\bigcup\{F_{\omega}(K): \omega\in \mathcal W_m \}$ and for ${\bf\gamma}\in \Gamma_{m-1}(i)$, write $\Omega^{m}_{\bf\gamma}=\Omega^m\cap \Omega_{\bf\gamma}$. Clearly, $\Omega_i=\bigcup_{m=1}^{\infty}\Omega^m$ and $\Omega^m=\bigcup_{{\bf\gamma}\in\Gamma_{m-1}(i)}\Omega^m_{\bf\gamma}$. Define $T_m=\{\Phi_{\bf\gamma}(V^{({\bf\gamma})}):\ {\bf\gamma}\in\Gamma_m(i)\}$ for $m\geq1$ and $T_0=\emptyset$. Write $T_*=\bigcup_{m=1}^\infty T_m$.

\medskip

{\it Claim. (1). $\{\Omega^m_{\bf\gamma}\setminus T_*\}$ are disjoint open subsets in $\Omega_i$;

(2). there exists an integer $m_0\geq1$ such that for all $m\geq1$, $T_m\cap \bigcup_{k\geq m+m_0}T_{k}=\emptyset$, $\Omega^m\cap \bigcup_{k\geq m+m_0}\Omega^{k}=\emptyset$;

(3). the boundary of each $\Omega^m_{\bf\gamma}\setminus T_*$, denoted by $\partial\Omega^m_{\bf\gamma}$, is contained in $T_{m-1}\cup T_m$ and has no intersection with $T_k$ for $k\leq m-1-m_0$ or $k\geq m+m_0$.
}

\medskip

Indeed, (1) is obvious. For (2), by the BGD condition, for $\omega\in\mathcal W_m$, $F_{\omega^{-}}(K)\cap \Omega_i=\Omega_{\bf\gamma}$ for some ${\bf\gamma}\in\Gamma_{m-1}(i)$. Hence $F_\omega(K)$ is a closed subset in $\Omega_{\bf\gamma}$ and has positive distance to its geometric boundary $D_{\bf\gamma}$, so that $F_\omega(K)\cap \bigcup_{{\bf\eta}\in \Gamma_{m+m_1}(i):[{\bf\eta}]_{m}={\bf\gamma}}\Omega_{\bf\eta}=\emptyset$ if we pick $m_1$ sufficiently large. Since the choice of $\Omega_{T(\bf\gamma)}$ is finite, we can pick a common $m_1$ (independent of $m$). Also we can pick $m_2$ large such that for any $j\in\{1,\ldots,P\}$ and $\gamma\in \Gamma(j)$, $\Omega_j\cap V_0\cap \bigcup_{{\bf\xi}\in\Gamma_{m_2}(j)}\Omega_{\bf\xi}=\emptyset$.  Then $m_0:=\max\{m_1,m_2\}$ is so that (2) holds. (3) follows from (2).

For $m\geq1$ and for each point $x=\Phi_{\bf\gamma}(p)\in T_m\setminus \left(\bigcup_{j=0}^{m-1}T_{j}\right)$ with $p\in V^{(\bf\gamma)}$, ${\bf\gamma}\in\Gamma_m(i)$, define $h(x)=f_{{\bf\gamma},p}$. When more than one $({\bf\gamma},p)$ satisfies $x=\Phi_{\bf\gamma}(p)$, we arbitrarily choose one of them to define $h(x)$.
We then do harmonic extension of $h$ in each $\Omega^m_{\bf\gamma}\setminus T_*$ through its boundary values which are well defined already. 

 By the construction, for $\Omega^m_{\bf\gamma}$ with $m> m_0$ sufficiently large (${\bf\gamma}\in\Gamma_{m-1}(i)$), note that the values of $h$ in each $\Omega_{\bf\gamma}^m\setminus T_*$ are determined by its values on $\partial\Omega^m_{\bf\gamma}\subset \Phi_{\bf\gamma}(V^{({\bf\gamma})})\cup \bigcup_{{\bf\eta}:{\bf\eta}^-={\bf\gamma}}\Phi_{\bf\eta}(V^{({\bf\eta})})$. From this, by the Claim, we see that for each $x\in \partial\Omega^m_{\bf\gamma}$, if it has an expression $x=\Phi_{{\bf\gamma'}}(p)$ for some ${\bf\gamma}'\in\Gamma_k(i)$ and $p\in V^{({\bf\gamma'})}$, we must have $k\geq m-m_0$ and $[{\bf\gamma}']_{m-m_0-1}=[{\bf\gamma}]_{m-m_0-1}$. So the value $h(x)$ lies in 
\begin{equation}\label{eqadsz}
\big\{f_{{\bf\gamma}',p}: {\bf\gamma}'\in \Gamma_k(i), m-m_0-1\leq k\leq m,[{\bf\gamma}']_{m-m_0-1}=[{\bf\gamma}]_{m-m_0-1}, p\in \Phi_{\bf\gamma'}(V^{({\bf\gamma'})})\big\}.
\end{equation}
 Thus the values of $h$ in $\Omega^m_{\bf\gamma}$ is defined by integrating the values of $f$ on $\partial\Omega_{[{\bf\gamma}]_{m-m_0-1}}$ against probability measures. Since $f$ is continuous, we see that $h$ is continuous on $\widetilde\Omega_i$.

Now we estimate $\mathcal E_{\Omega^m}[h]$ for $m\geq1$. By using  \eqref{eqadsz}, Lemma \ref{lemma6.1} and the triangle inequality, we have
\begin{align}
&\mathcal{E}_{\Omega^m}[h]=\sum_{{\bf\gamma}\in\Gamma_{m-1}(i)}\mathcal{E}_{\Omega_{\bf\gamma}^m}[h]\lesssim\sum_{{\bf\gamma}\in\Gamma_{m-1}(i)}\frac1{r_{\bf\gamma}}\sum_{x,y\in \partial\Omega^m_{\bf\gamma}}(h(x)-h(y))^2\notag\\
\lesssim&\sum_{k=\max\{m-m_0-1,0\}}^{m-1}\sum_{{\bf\gamma}\in\Gamma_{k}(i)}\frac1{r_{\bf\gamma}}\left(\sum_{{\bf\xi}^-={\bf\eta}^-={\bf\gamma}}\sum_{p\in V^{({\bf\xi})},\ q\in V^{({\bf\eta})}}\left(f_{{\bf\xi},p}-f_{{\bf\eta},q}\right)^2
+\sum_{{\bf\eta}^-={\bf\gamma}}\sum_{p\in V^{({\bf\gamma})},\ q\in V^{({\bf\eta})}}\left(f_{{\bf\gamma},p}-f_{{\bf\eta},q}\right)^2\right).
\label{the2term}
\end{align}
Observe that by Proposition \ref{thmpm}, for each ${\bf\gamma}$, $f_{{\bf\gamma},p}$ is a linear combination of those $f_{{\bf\eta},q}, {\bf\eta}^-={\bf\gamma}$ with probability weights, and the weights are some constants independent of $f$. Hence the second term in the summation on the RHS of \eqref{the2term} can be absorbed into the first term. We obtain
\begin{align}\label{eq6.23}
\mathcal{E}_{\Omega^m}[h]&\lesssim\sum_{k=\max\{m-m_0,0\}}^{m-1}\sum_{{\bf\gamma}\in\Gamma_{k}(i)}\frac1{r_{\bf\gamma}}\sum_{{\bf\xi}^-={\bf\eta}^-={\bf\gamma}}\sum_{p\in V^{({\bf\xi})},\ q\in V^{({\bf\eta})}}\left(f_{{\bf\xi},p}-f_{{\bf\eta},q}\right)^2.
\end{align}
By summing up the estimate \eqref{eq6.23} over all $m\geq1$, we have
\begin{align*}\label{eq5.18}
\mathcal E_{\Omega_i}[u]\leq \mathcal E_{\Omega_i}[h]=\sum_{m=1}^\infty\mathcal E_{\Omega^m}[h]\lesssim\sum_{m=0}^{\infty}\sum_{{\bf\gamma}\in\Gamma_m(i)}^{\infty}\frac1{r_{\bf\gamma}}\sum_{{\bf\xi}^{-}={\bf\eta}^{-}={\bf\gamma}}\sum_{p\in V^{({\bf\xi})},\ q\in V^{({\bf\eta})}}\left(f_{{\bf\xi},p}-f_{{\bf\eta},q}\right)^2,
\end{align*}
which proves ``$\lesssim$" in \eqref{energyestimate}
.




\medskip

We then prove the ``$\gtrsim$" of \eqref{energyestimate}. For ${\bf \xi}\in\Gamma_m(i)$ with $m\geq1$, we denote $u^{({\bf\xi})}:=u\circ \Phi_{\bf\xi}$, a function on $\Omega_{T({\bf\xi})}$. For $p\in V^{(\bf\xi)}$, denote $v^{({\bf\xi},p)}=R_{\Omega_{T({\bf\xi})}}(\partial\Omega_{T({\bf\xi})},p)\varphi$ with $\varphi$ being the realization of $R_{\Omega_{T({\bf\xi})}}(\partial\Omega_{T({\bf\xi})},p)$. We apply \eqref{Poissonex} to obtain
\begin{equation}\label{eq6.24}
f_{{\bf\xi},p}=u^{({\bf\xi})}(p)-\sum_{q\in V^{({\bf\xi})}}v^{({\bf\xi},p)}(q)(du^{({\bf\xi})})_q.
\end{equation}
Now for a pair ${\bf\xi}\sim{\bf\eta}$ in $\Gamma_m(i)$ (i.e. ${\bf\xi}^-={\bf\eta}^-={\bf\gamma}$ for some ${\bf\gamma}\in\Gamma_{m-1}(i)$) and $p\in V^{(\bf\xi)}$, $p'\in V^{(\bf\eta)}$, noticing that $0\leq v^{({\bf\xi},p)}\leq R_{\Omega_{T({\bf\xi})}}(\partial\Omega_{T({\bf\xi})},p)$ and $0\leq v^{({\bf\eta},p')}\leq R_{\Omega_{T({\bf\eta})}}(\partial\Omega_{T({\bf\eta})},p')$, which are bounded by a universal constant, we obtain
\begin{align}\label{eq522}
\left(f_{{\bf\xi},p}-f_{{\bf\eta},p'}\right)^2
&\lesssim
\sum_{q\in V^{({\bf\xi})}}\left|(du^{({\bf\xi})})_q\right|^2+\sum_{q\in V^{({\bf\eta})}}\left|(du^{({\bf\eta})})_q\right|^2+(u^{({\bf\xi})}(p)-u^{({\bf\eta})}(p'))^2\notag\\
&\lesssim \begin{cases} r_{\bf\gamma}\mathcal E_{\left(\bigcup_{k=m}^{m+n_1}\Omega^k\right)\cap \Omega_{\bf\gamma}}[u],\qquad &\text{ if }{\bf\xi}\neq{\bf\eta},\\
r_{\bf\xi}\mathcal E_{\left(\bigcup_{k=m}^{m+n_1}\Omega^k\right)\cap \Omega_{\bf\xi}}[u],\ \ \quad &\text{ if }{\bf\xi}={\bf\eta},
\end{cases}
\end{align}
where $n_1$ is the same as that in Lemma \ref{connectedlemma}, and we have used Lemma \ref{lemma6.1} and triangle inequality in \eqref{eq522}. Summing up (\ref{eq522}) over all pairs ${\bf\xi}\sim{{\bf\eta}}$ in $\Gamma_m(i)$ and all possible $p,p'$, we get
\begin{equation}\label{eq6.26}
\frac1{r_{\bf\gamma}}\sum_{{\bf\gamma}\in\Gamma_{m-1}(i)}\sum_{{\bf\xi}^{-}={\bf\eta}^{-}={\bf\gamma}}\sum_{p\in V^{({\bf\xi})},\ q\in V^{({\bf\eta})}}\left(f_{{\bf\xi},p}-f_{{\bf\eta},q}\right)^2
\lesssim \sum_{k=m}^{m+n_1}\mathcal E_{\Omega^k}[u].
\end{equation}
Summing up the inequalities \eqref{eq6.26} over all $m\geq1$, we finally obtain
\begin{equation*}
\sum_{m=0}^{\infty}\sum_{{\bf\gamma}\in\Gamma_m(i)}^{\infty}\frac1{r_{\bf\gamma}}\sum_{{\bf\xi}^{-}={\bf\eta}^{-}={\bf\gamma}}\sum_{p\in V^{({\bf\xi})},\ q\in V^{({\bf\eta})}}\left(f_{{\bf\xi},p}-f_{{\bf\eta},q}\right)^2
\lesssim\mathcal E_{\Omega_i}[u],
\end{equation*}
 proving ``$\gtrsim$" in \eqref{energyestimate}.


\end{proof}

\section{\bf Examples}\label{sec7}
\setcounter{equation}{0}\setcounter{theorem}{0}

In this section, we present several examples. We will first consider the Sierpinski gasket (SG) as a typical example. There is a large class of domains in SG which are constructed by using a straight line to ``cut" the SG. We prove that these domains will satisfy the BGD condition if the line is passing through two points in $V_*$ of SG. Then for some typical cases in this class, we compute the corresponding flux transfer matrices which generate the hitting probability measures, see \cite{OS,GKQS,LS,CQ,KT22} for several previous works. We also present some other examples satisfying the BGD as well as some calculations.

\subsection{Example: Sierpinski gasket}\label{subsec7.1}
Let $K$ be the Sierpinski gasket in $\mathbb R^2$, generated by the IFS $\{F_i\}_{i=1}^3$ with $F_i(x)=\frac 12 (x-p_i)+p_i$, $i=1,2,3$, and $V_0=\{p_1,p_2,p_3\}$ is the three vertices of an equilateral triangle $T$. The standard resistance form $(\mathcal{E},\mathcal{F})$ on $K$  satisfies the self-similar identity \cite{K1}
\begin{equation*}
\mathcal E[u]=\frac53\sum_{i=1}^3\mathcal E[u\circ F_{i}], \quad \forall u\in \mathcal{F}.
\end{equation*}

Let $L\subset \mathbb R^2$ be the straight line. Then $L$ separates the plane into two disjoint (open) parts, say $H_1$ and $H_2$. Denote $\Omega_1=K\cap H_1$ and $\Omega_2= K\cap H_2$.
\begin{proposition}\label{thm7.2}
Each of the two sets $\Omega_1$ and $\Omega_2$ is arcwise connected. Moreover, for $i=1$ or $2$, assume $\Omega_i\neq\emptyset$, then unless $L$ includes an edge of $F_{\omega}(T)$ for some $\omega\in \cup_{n\geq0}\Sigma^n$ and $\Omega_i$ contains two points in $V_0$, the geometric boundary of $\Omega_i$ is $L\cap K$.
\end{proposition}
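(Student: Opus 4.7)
The plan is to establish the two assertions separately. For arcwise connectedness of a nonempty $\Omega_i$, I use the convexity of the half-plane $H_i$: given $x,y\in\Omega_i$, the Euclidean segment $[x,y]$ lies in $H_i$ and is at positive distance $2\delta$ from $L$. Choosing $n$ large enough that every $n$-cell has Euclidean diameter less than $\delta$, the family $\mathcal{C}$ of $n$-cells of $K$ meeting the closed $\delta$-neighbourhood of $[x,y]$ is contained in $\Omega_i$; the planar cell-structure of $K$ (cf.~Theorem~1.6.2 of \cite{K}) lets me chain these cells through shared vertices into a path from $x$ to $y$ inside $\bigcup\mathcal{C}\subset \Omega_i$. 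After first moving $x,y$ slightly within a small cell contained in $\Omega_i$ so that they lie in $V_*$, the chain argument becomes routine.

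The inclusion $\partial\Omega_i\subseteq L\cap K$ is immediate from $\overline{\Omega_i}\subseteq K\cap(H_i\cup L)$. For the reverse inclusion under the stated hypothesis, I fix $z\in L\cap K$ and a nested sequence $\{F_{\omega^{(n)}}(K)\}$ of cells collapsing to $z$, and split into two cases according to whether or not $L$ contains an edge of some $F_{\omega^{(n)}}(T)$ for arbitrarily large $n$. In the generic case the chord $L\cap F_{\omega^{(n)}}(T)$ crosses the interior of the triangle without following any side, so after descending to deeper sub-cells I can find sub-cells of $F_{\omega^{(n)}}(T)$ on both sides of $L$ arbitrarily close to $z$; their interior points lie in $\Omega_1$ and $\Omega_2$ respectively, giving $z\in\partial\Omega_1\cap\partial\Omega_2$.

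In the remaining case, $L$ contains an edge $e$ of some $F_\omega(T)$. Since every $F_i$ is orientation-preserving, each $F_\omega(T)$ is a right-side-up scaled copy of $T$, so $L$ must be parallel to one of the three edges of $T$ and $F_\omega(T)$ lies entirely on one side of $L$ (the ``$F_\omega$-side''). The crucial local fact is that near the interior of $e$, the set $K$ approaches $L$ only from the $F_\omega$-side: the region on the opposite side of $e$ is either outside $T$ (when $\omega=\emptyset$) or a removed middle triangle of an ancestor cell (when $\omega\neq\emptyset$), both of which are empty of $K$ in a neighbourhood of the interior of $e$. Hence interior points of $e$ belong to $\partial\Omega_j$ only when $\Omega_j$ is on the $F_\omega$-side.

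A short orientation argument then shows that when $\omega\neq\emptyset$ and $e$ is interior to $T$, the chord $L\cap T$ is parallel to one side of $T$ without coinciding with any side, hence splits $V_0$ into groups of sizes $1$ and $2$; because $F_\omega(T)$ is a right-side-up copy of $T$, the unique $V_0$-vertex lies on the $F_\omega$-side and the remaining two on the opposite side. When $\omega=\emptyset$ the opposite side is disjoint from $T$, so $\Omega_i$ there is empty. Combining these observations, $\partial\Omega_i\neq L\cap K$ can only occur when $\Omega_i$ lies on the side of $L$ opposite to some proper cell triangle whose edge is contained in $L$, and this is exactly the condition that $\Omega_i$ contains two vertices of $V_0$. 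The main obstacle is carrying out this orientation/counting step cleanly, so that the ``opposite-side'' configuration is matched precisely with the ``two vertices of $V_0$'' condition in the statement.
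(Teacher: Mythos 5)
Your treatment of the boundary (the inclusion $D_i\subseteq L\cap K$, the generic case via vertices of small cells on both sides of $L$, and the exceptional case where $L$ carries a cell edge and the side opposite the cell sees only finitely many limit points) is essentially the paper's argument, and your orientation observation — every $F_\omega(T)$ is an upright copy of $T$, so the cell whose edge lies on $L$ sits on the side of $L$ containing exactly one point of $V_0$ — is a correct way to match the ``opposite side'' configuration with the ``two vertices of $V_0$'' exception. That half is fine.

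The arcwise-connectedness argument, however, has a genuine gap. Convexity of $H_i$ guarantees that the segment $[x,y]$ stays in $H_i$, but $K$ is not convex and $[x,y]$ need not stay anywhere near $K$: it can cross a removed triangle of the gasket. Concretely, take $x=F_{13}(p_2)=(3/8,\sqrt{3}/8)$ and $y=F_{23}(p_1)=(5/8,\sqrt{3}/8)$; the open segment $(x,y)$ lies in the interior of the central removed triangle, and its midpoint is at a fixed positive distance $\rho$ from $K$. Now let $L$ be the horizontal line at height $\sqrt{3}/8-\varepsilon$ with $\varepsilon<\rho$, so that your $\delta=\varepsilon/2$. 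The $n$-cells of diameter less than $\delta$ meeting the closed $\delta$-neighbourhood of $[x,y]$ then split into one cluster inside $F_1(K)$ near $x$ and one inside $F_2(K)$ near $y$, with no cell at all near the middle of the segment; since $F_1(K)\cap F_2(K)=\{(1/2,0)\}$ is far from the segment, the cell containing $x$ and the cell containing $y$ lie in different connected components of $\bigcup\mathcal{C}$, and no chain through shared vertices exists inside $\bigcup\mathcal{C}$. (The set $\Omega_i$ is still arcwise connected — a path exists going up through $F_1(p_3)$, along the bottom of $F_3(K)$, and down through $F_2(p_3)$ — but it cannot be found in the $\delta$-sausage of the straight segment.) The paper avoids this by abandoning the straight segment altogether: it connects an arbitrary point of $\Omega_i$ to the vertex of $V_0\cap\Omega_i$ farthest from $L$ by a chain of cells along which the distance to $L$ strictly increases, so the chain automatically stays in $\Omega_i$ without any neighbourhood-of-a-segment argument. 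You would need either to adopt that monotone-chain idea or to replace $[x,y]$ by a polygonal path that actually tracks $K$; as written, the chaining step fails.
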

\begin{proof}
For simplicity, we denote $H=H_i$ and $\Omega=\Omega_i$ for $i=1$ or $2$ and assume $\Omega\neq\emptyset$. Note that $\#(\Omega\cap V_0)=1$ or $2$.

 First we prove that $\Omega$ is arcwise connected. For $x\in K$, denote by $d(x,L)=\inf\{|x-y|: y\in L\}$ the distance from $x$ to $L$. Then by the geometry of $K$, $d=\max\{d(x,L):x\in\Omega\}$ is attained by some point in $V_0\cap\Omega$, say $p_1$ for simplicity.

We claim that

\medskip

 {\it for any $x_0\in \Omega$, there is a curve included in $\Omega$ connecting $x_0$ and $p_1$}.

\medskip

 Indeed, by that $\Omega$ is open, there is a word $\omega^{(1)}\in\Sigma^n$ for some $n\geq1$ such that $x_0\in F_{\omega^{(1)}}(K)\subset \Omega$. Clearly, there exists a curve inside $F_{\omega^{(1)}}(K)$ connecting $x_0$ to each point in $F_{\omega^{(1)}}(V_0)$. Let $d_1=\max\{d(x,L): x\in F_{\omega^{(1)}}(V_0)\}$ and $x_1\in F_{\omega^{(1)}}(V_0)$ be such that $d(x_1,L)=d_1$. If $d_1<d$, we can find another word $\omega^{(2)}\in \Sigma^n$ such that $F_{\omega^{(1)}}(K)\cap F_{\omega^{(2)}}(K)= \{x_1\}$, $F_{\omega^{(2)}}(K)\subset\Omega$ and $d_2:=\max\{d(x,L):\ x\in F_{\omega^{(2)}}(V_0)\}>d_1$. If $d_2<d$, we do the same thing with $\omega^{(2)}$ and continue this procedure to find a finite chain of $n$-cells $\{F_{\omega^{(i)}}(K)\}^m_{i=1}$ and a finite sequence of points $\{x_i\}^m_{i=1}$ such that $d_i:=d(x_i,L)$ is strictly increasing for $i$ and $d_m=d$. Note that the only possible case $x_m\neq p_1$ happens when $\#(\Omega\cap V_0)=2$ and $L$ is parallel to the line passing through $\Omega\cap V_0$. In this case, the line segment joining $x_m$ and $p_1$ is obviously included in $\Omega$.
 From the construction, for $i=1,\ldots,m$ there is a curve included in $F_{\omega^{(i)}}(K)(\subset\Omega)$ connecting $x_{i-1}$ and $x_{i}$. Hence by concatenating these curves, we obtain a curve included in $\Omega$ connecting $x_0$ and $p_1$. The claim holds and hence $\Omega$ is arcwise connected.

\medskip

Then we prove the second assertion. Denote by $D$ the geometric boundary of $\Omega$. It is clear that $D\subset L\cap K$.

 First, assume that for any $\tau\in \cup_{n\geq0}\Sigma^n$, $L$ does not include any edge of $F_{\tau}(T)$, thus $L$ contains at most one point in $F_{\tau}(V_0)$. For any $x\in L\cap K$, let $\{\tau^{(n)}\}_{n=1}^\infty$ be a sequence of words such that for each $n \geq1$, $\tau^{(n)}\in\Sigma^n$, $x\in F_{\tau^{(n)}}(K)$ and $F_{\tau^{(n)}}(V_0)\cap \Omega\neq\emptyset$. Hence for any $n\geq1$, we can pick $x_n\in F_{\tau^{(n)}}(V_0)\cap \Omega$ so that $x_n\rightarrow x$ as $n\rightarrow\infty$, which implies that $x\in D$ and thus $L\cap K\subset D$.

Second, assume $L$ includes an edge of $F_{\omega}(T)$ for some word $\omega\in \Sigma^n$, $n\geq0$. In this case, $L$ is parallel to one edge (denoted by $S$) of $T$ and $L\cap K=\bigcup_{k=1}^mF_{\omega^{(k)}}(S)$ for a finite number of $n$-words $\{\omega^{(k)}\}_{k=1}^m$. If $\Omega$ contains only one point in $V_0$, then $F_{\omega^{(k)}}(K)\setminus L\subset\Omega$ for each $1\leq k\leq m$, and hence $L\cap K\subset D$; otherwise, $\Omega$ contains two points in $V_0$, then it is immediate from the geometry of $\Omega$, $D=\bigcup_{k=1}^mL\cap F_{\omega^{(m)}}(V_0)$, which is finite and strictly contained in $L\cap K$.

Combing the above two cases, we conclude that the second assertion holds.
\end{proof}

In the following, pick arbitrarily two distinct points $p,q\in V_*$, assume $L$ is the straight line passing through $p$ and $q$.
\begin{proposition}\label{thm7.2}
Let $L,p,q, \Omega_1,\Omega_2$ be as above.
For $i=1$ or $2$, if $\Omega_i\neq\emptyset$, then $\Omega_i$ satisfies the BGD condition.
\end{proposition}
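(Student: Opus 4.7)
The plan is to construct a finite collection of domains $\{(\Omega_j, D_j)\}_{j=1}^P$ containing $\Omega_i$ and closed under the zoom-in operation required by the BGD condition. Since each $F_k$ is a similarity preserving directions, the line $F_\omega^{-1}(L)$ is always parallel to $L$; hence every line-cut configuration produced by zooming in is of the form $K \cap H'$, where $H'$ is an open half-plane bounded by some line $L'$ parallel to $L$. In addition, when the cut line only touches a cell at an isolated vertex $p_j$, the zoom-in yields the degenerate configuration $K \setminus \{p_j\}$ with boundary $\{p_j\}$. The crux of the proof is therefore to show that only finitely many such lines $L'$ can arise.

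To make this precise, I would rotate coordinates so that $L = \{y_2 = 0\}$ and set $a_k := p_k^{(2)}$, the perpendicular signed distance from $p_k$ to $L$. A direct induction from $F_k^{-1}(x) = 2x - p_k$ gives
$$
t\bigl(F_\omega^{-1}(L)\bigr) = -\sum_{i=1}^{|\omega|} 2^{|\omega|-i} a_{\omega_i},
$$
and the condition $F_\omega^{-1}(L) \cap T \neq \emptyset$ is equivalent to $t(F_\omega^{-1}(L)) \in [\min_k a_k, \max_k a_k]$. Using $p, q \in V_*$, I would write $p = F_\alpha(p_{i_0})$ and $q = F_\beta(p_{j_0})$ with $|\alpha| = |\beta| = N$ (extending by repeating a terminal letter if necessary, which is possible since $F_{i_0}(p_{i_0}) = p_{i_0}$). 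The conditions $p, q \in L$ translate into two integer linear relations
$$
a_{i_0} + \sum_{i=1}^N 2^{N-i} a_{\alpha_i} = 0, \qquad a_{j_0} + \sum_{i=1}^N 2^{N-i} a_{\beta_i} = 0.
$$
Both coefficient vectors have entries summing to $2^N$, so if they were proportional they would have to coincide; but then $p = 2^{-N}\sum_k (\text{coeff})_k\, p_k = q$, contradicting $p \neq q$. The two relations are therefore linearly independent, so $(a_1, a_2, a_3)$ lies in a one-dimensional $\mathbb Q$-subspace. Writing $(a_1, a_2, a_3) = s(c_1, c_2, c_3)$ with $(c_1, c_2, c_3) \in \mathbb Z^3$ and $s \in \mathbb R$, one gets $t(F_\omega^{-1}(L)) \in s\mathbb Z$, a discrete set; intersecting with the bounded range $[s\min c_k, s\max c_k]$ leaves only finitely many reachable values. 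This is the key finiteness lemma, and I expect it to be the main technical step.

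With finiteness in hand, the collection consists of the sets $K \cap H'$ for the finitely many reachable pairs $(L', H')$ (with $K \cap H' \neq \emptyset$), augmented by the degenerate $K \setminus \{p_j\}$'s that appear. The BGD closure is then routine to verify: for $\Omega' = K \cap H'$ and a cell $F_k(K)$ that meets $\Omega'$ and its boundary non-trivially, one has $\Omega' \cap F_k(K) = F_k\bigl(K \cap F_k^{-1}(H')\bigr)$, and $F_k^{-1}(H')$ is bounded by $F_k^{-1}(L') \in \mathcal L$, so the zoomed-in domain is another member of the collection. Each degenerate $K \setminus \{p_j\}$ is stable because $p_j \in F_m(K)$ only for $m = j$, and $(K \setminus \{p_j\}) \cap F_j(K) = F_j(K \setminus \{p_j\})$. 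The main obstacles are (i) the linear-algebra step establishing linear independence of the coefficient vectors, and (ii) the careful bookkeeping needed to handle the exceptional geometry mentioned in the preceding proposition, where $L$ includes an edge of some $F_\omega(T)$ and $D_i$ may be a proper subset of $L \cap K$; this case produces finitely many additional configuration types (involving $V_0$-edges of $T$) but does not affect the overall finiteness of the collection.
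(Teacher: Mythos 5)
Your proposal is correct, and it reaches the finiteness of the family of configurations by a genuinely different mechanism than the paper. The paper works with the auxiliary dyadic lattice $\frac{\mathbb Z}{2^n}{\bf e}_1+\frac{\mathbb Z}{2^n}{\bf e}_2$ of translated copies $\varphi_{n,{\bf x}}(K)$ and gets finiteness from periodicity of the half-plane configuration under translation by the lattice vector $\overrightarrow{pq}$, then checks BGD closure by an explicit computation showing that level-$(n+1)$ configurations $K\cap(2^{n+1}H-y_1{\bf e}_1-y_2{\bf e}_2)$ already occur at level $n$. You instead parametrize every reachable configuration directly by the pulled-back line $F_\omega^{-1}(L)$ and show its signed offset is confined to the discrete set $s\mathbb Z$ intersected with the bounded interval $[\min_k a_k,\max_k a_k]$; the key input is your rank argument that the two integer relations coming from $p,q\in V_*\cap L$ cannot be proportional (same coefficient sum $2^N$ plus $p\neq q$), so $(a_1,a_2,a_3)$ spans a rational line. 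Both arguments ultimately exploit the same fact --- that $L$ meets the dyadic triangular lattice in two points --- but yours avoids the auxiliary family of translated copies and isolates the arithmetic content more cleanly, while the paper's version yields the explicit periodic list of domains (useful for the computations in Section 7). Two places where you are thinner than the paper, neither fatal: (i) the case where $L$ is parallel to an edge of $T$ (slope $0$ after normalization), where $D_i$ can be a finite set or a union of segments rather than $L\cap K$, is only sketched in your item (ii), whereas the paper dispatches it first with explicit decompositions such as $\Omega=F_1(\Omega)\cup F_2(\Omega)\cup F_3(K)$; and (ii) in the closure step you should say explicitly that $D'\cap F_k(K)=L'\cap F_k(K)=F_k(F_k^{-1}(L')\cap K)$ and invoke the preceding proposition to identify $F_k^{-1}(L')\cap K$ as the geometric boundary of the zoomed-in domain, so that both equalities required by BGD (for $\Omega$ and for $D$) hold, not just the one for $\Omega$.
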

\begin{proof}
For convenience, we write ${\bf e}_1=\overrightarrow{p_1p_2}$, ${\bf e}_2=\overrightarrow{p_1p_3}$ for two unit vectors, where $p_1=O=(0,0)$, $p_2=(1,0)$ and $p_3=(\frac12,\frac{\sqrt3}2)$. Denote $I$ the line segment joining $p_1$ and $p_2$.
 By symmetry, we may assume that the line $L$ has the slope in $[0,\sqrt{3})$.

 If the slope of $L$ is $0$, then we assume $L$ passes through $\frac k{2^n}(\frac12,\frac{\sqrt3}2)$ for two integers $n\geq 0$ and $k\in[0,2^n]$. We may also require that $k$ is either $0$ or an odd number. If $k=0$, then $L$ is the line passing through $p_1,p_2$, and $K\cap H\neq\emptyset$ only for the upper half-plane $H$, $\Omega=K\cap H=F_1(\Omega)\cup F_2(\Omega)\cup F_3(K)$ satisfies the BGD condition. The case $n=0,k=1$ is the case that $\Omega=K\setminus\{p_3\}$ and clearly $\Omega=F_1(K)\cup F_2(K)\cup F_3(\Omega)$ satisfies the BGD condition.
Otherwise, assume $n\geq1$ and $k\neq 0$. For the upper half-plane $H_1$, the geometric boundary of $\Omega_1$ is a disjoint union of $F_\omega(I)$ for some $\omega\in\Sigma^n$. Hence $\Omega$ can be written as a union of $n$-cells and $F_\omega(K\setminus I)$ and satisfies the BGD condition. For the lower half-plane $H_2$, the geometric boundary of $\Omega_2$ is a disjoint union of $F_\omega(\{p_1,p_2\})$ for some $\omega\in\Sigma^n$. Similarly, $\Omega$ can be written as a union of $n$-cells and $F_\omega(K\setminus \{p_3\})$ and also satisfies the BGD condition.

\medskip

 Next we assume the slope of $L$ is in $(0,\sqrt{3})$, then $L$ does not include any edge of the triangle $F_{\omega}(T)$ for any $\omega\in \cup_{n\geq0}\Sigma^n$. We only prove the proposition for $\Omega_1$, since the $\Omega_2$ case is similar. In the following, we write $\Omega=\Omega_1$ and $H=H_1$.

 For $k\geq0$ and ${\bf x}=\frac{x_1}{2^k}{\bf e}_1+\frac{x_2}{2^k}{\bf e}_2$ with integers $x_1, x_2$, define a map $\varphi_{k,{\bf x}}: \mathbb R^2\rightarrow \mathbb R^2$ such that
 \begin{equation*}
 \varphi_{k,{\bf x}}(z)=\frac z{2^k}+{\bf x}.
 \end{equation*}
Let  \begin{equation*}
\mathcal C_k=\{ \varphi_{k,{\bf x}}(K):\ {\bf x}=\frac{x_1}{2^k}{\bf e}_1+\frac{x_2}{2^k}{\bf e}_2, x_1, x_2\in \mathbb Z, \varphi_{k,{\bf x}}(K)\cap H\neq\emptyset,\varphi_{k,{\bf x}}(K)\cap L\neq\emptyset \}.
\end{equation*}
For $\alpha=\varphi_{k,{\bf x}}(K)\in \mathcal C_k$, denote $\Omega_{\alpha}=\varphi_{k,{\bf x}}^{-1}(\varphi_{k,{\bf x}}(K)\cap H)$ and $D_{\alpha}=\varphi_{k,{\bf x}}^{-1}(\varphi_{k,{\bf x}}(K)\cap L)$.

Assume $p,q\in V_n$ for some $n\geq0$.

\medskip

{\it Claim. The collection
$
\{(\Omega_\alpha, D_{\alpha}): \alpha\in \mathcal C_n\}
$
is finite and satisfies the BGD condition.}

\medskip

Indeed, noting that $p,q\in\frac{\mathbb Z}{2^n}{\bf e}_1+\frac{\mathbb Z}{2^n}{\bf e}_2$, by periodicity, the collection $\{(\Omega_\alpha, D_{\alpha}): \alpha\in \mathcal C_n\}$ is determined by those $\varphi_{n,{\bf x}}(K)$ with $\varphi_{n,{\bf x}}(K)\cap \overline{pq}\neq\emptyset$ (where $\overline{pq}$ is the line segment connecting $p,q$), hence is finite.

It suffices to check that for any $\beta\in \mathcal C_{n+1}$, $\Omega_\beta=\Omega_\alpha$ for some $\alpha\in \mathcal C_n$. Assume $\alpha=\varphi_{n,{\bf x}}(K)\in \mathcal C_n$ for some ${\bf x}=\frac{x_1}{2^{n}}{\bf e}_1+\frac{x_2}{2^{n}}{\bf e}_2$. Then
\begin{align*}
\Omega_\alpha&=2^{n}\left(\left(\frac{1}{2^{n}}K+{\bf x}\right)\cap H-{\bf x}\right)\\
&=(K+2^{n}{\bf x})\cap 2^{n}H-2^{n}{\bf x}\\
&=K\cap 2^{n}(H-{\bf x})\\
&=K\cap (2^{n}H-x_1{\bf e}_1-x_2{\bf e}_2).
\end{align*}
Similarly, for $\beta=\varphi_{n+1,{\bf y}}(K)\in \mathcal C_{n+1}$  with ${\bf y}=\frac{y_1}{2^{n+1}}{\bf e}_1+\frac{y_2}{2^{n+1}}{\bf e}_2$, we have
\begin{align*}
\Omega_\beta=K\cap (2^{n+1}H-y_1{\bf e}_1-y_2{\bf e}_2).
\end{align*}
Since $H$ is determined by the line $L=\{z\in\mathbb R^2:\ \overrightarrow{Oz}=t\overrightarrow{pq}+\overrightarrow{Op},\ t\in\mathbb R\}$, we have $2^{k}H$ is determined by the line $\{z\in \mathbb R^2:\ \overrightarrow{Oz}=t\overrightarrow{pq}+2^{k}\overrightarrow{Op},\ t\in\mathbb R\}$, for any $k\geq 0$. Now for $k=n$ or $k=n+1$, $2^{k}\overrightarrow{Op}=k_1{\bf e}_1+ k_2{\bf e}_2$ for some integers $k_1,k_2$ since $p\in V_n$. Then the half-plane $2^{n+1}H-y_1{\bf e}_1-y_2{\bf e}_2$ is determined by some line $L'=\{z\in\mathbb R^2:\ \overrightarrow{Oz}=t\overrightarrow{pq}+k'_1{\bf e}_1+k'_2{\bf e}_2,\ t\in\mathbb R\}$, for some integers $k_1'$, $k_2'$. Hence we see that for any $\beta\in \mathcal C_{n+1}$, there is some $\alpha\in \mathcal C_n$ such that $\Omega_\beta=\Omega_\alpha$. The claim holds.

\medskip

From the claim, we immediately see that $\{\Omega_\alpha:\ \alpha\in \mathcal C_k,\ 0\leq k\leq n\}$ satisfies the BGD condition. In particular, $\Omega\in\{\Omega_\alpha:\ \alpha\in\mathcal C_0\}$ also satisfies the BGD condition.
\end{proof}

\medskip

Now we illustrate two particular situations and calculate their flux transfer matrices.

\medskip

 {{\bf 1.} $p=p_1,q=p_2,$ $\Omega=K\setminus\overline{p_1p_2}$, $D=\overline{p_1p_2}$} (see Figure \ref{figure2}).
 \begin{figure}[h]
	\includegraphics[width=5cm]{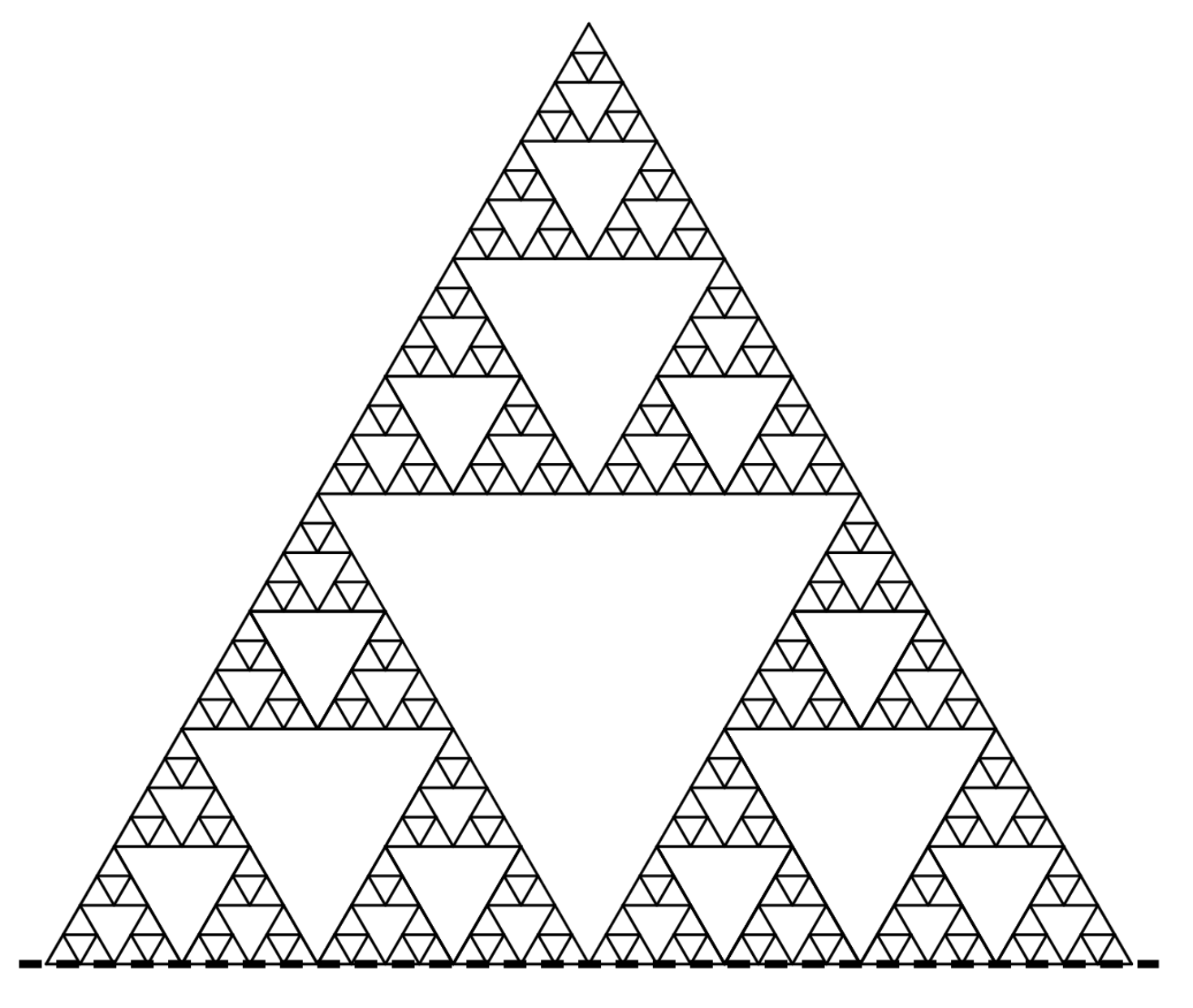}
	\begin{picture}(0,0)
	\put(-5,0){$p_2$}
	\put(-152,0){$p_1$}
	\put(-79,123){$p_3$}
	\end{picture}	
	\caption{a domain in SG} \label{figure2}
\end{figure}
 This is an example in \cite{OS} by Owen and Strichartz. By using a Haar basis expansion method, they proved that for this domain $\Omega$, the hitting probability from $p_3$ to $\partial\Omega$ is the normalized uniform measure on $\partial\Omega$. We refer to \cite{GKQS,CQ,KT22} for further discussions. Under the general setting here, we can reformulate their result as follows. The boundary $D=\overline{p_1p_2}$ can be viewed as a self-similar set generated by the IFS $\{F_1,F_2\}$, and this example satisfies the BGD condition, which has the directed graph $(\mathcal{A},\Gamma)$ with only one vertex $\mathcal{A}=\{1\}$ and two edges $\Gamma=\{\gamma_1,\gamma_2\}$, {each of which is} from $1$ to  itself, where $\gamma_i$ is associated with the contraction map $F_i$ for $i=1,2$ respectively. By using that the renormalizing factor $r=\frac35$ together with the self-similarity, it is not hard to compute the effective resistance $R_{\Omega}(\partial\Omega,p_3)=\frac37$, and the unit flow from $\partial\Omega$ to $p_3$ flows outwards $F_i(\Omega)$ through $F_i(p_3)$ with flux $\frac12$ for $i=1,2$. Thus the flux transfer matrices associated with $\gamma_1$ and $\gamma_2$ are $$M_{\gamma_1}= M_{\gamma_2}=\left(
                                                                                                                       \begin{array}{ccc}
                                                                                                                         0&0&0\\
                                                                                                                         0&0&0\\
                                                                                                                         0&0&\frac12\\
                                                                                                                       \end{array}
                                                                                                                     \right).
$$ Then by Theorem \ref{th3.1}, we can compute by using the product of $M_{\gamma_1},M_{\gamma_2}$ to obtain that the hitting probability from $p_3$ is the $(\frac12,\frac12)$-self-similar measure on $\partial\Omega$.

\medskip

{\bf 2.} $p=p_3$, $q=(\frac12,0)$, $\Omega=\{x=(x_1,x_2)\in K:\ x_1<\frac12\}$.  Note that $D=L\cap K$ consists of countably many points. By solving systems of countably infinite linear equations, Li and Strichartz \cite{LS} computed explicitly the hitting probability from $p_1$ to $\partial\Omega$ (homeomorphic to $D$). See also \cite{CQ} for generalizations by Cao and the second author.

Write $\Omega_1=\Omega$ with boundary $D_1=D$, and $\Omega_2=K\setminus\{p_2\}$ with boundary $D_2=\{p_2\}$, see Figure \ref{figure3}.
 \begin{figure}[h]
	\includegraphics[width=10cm]{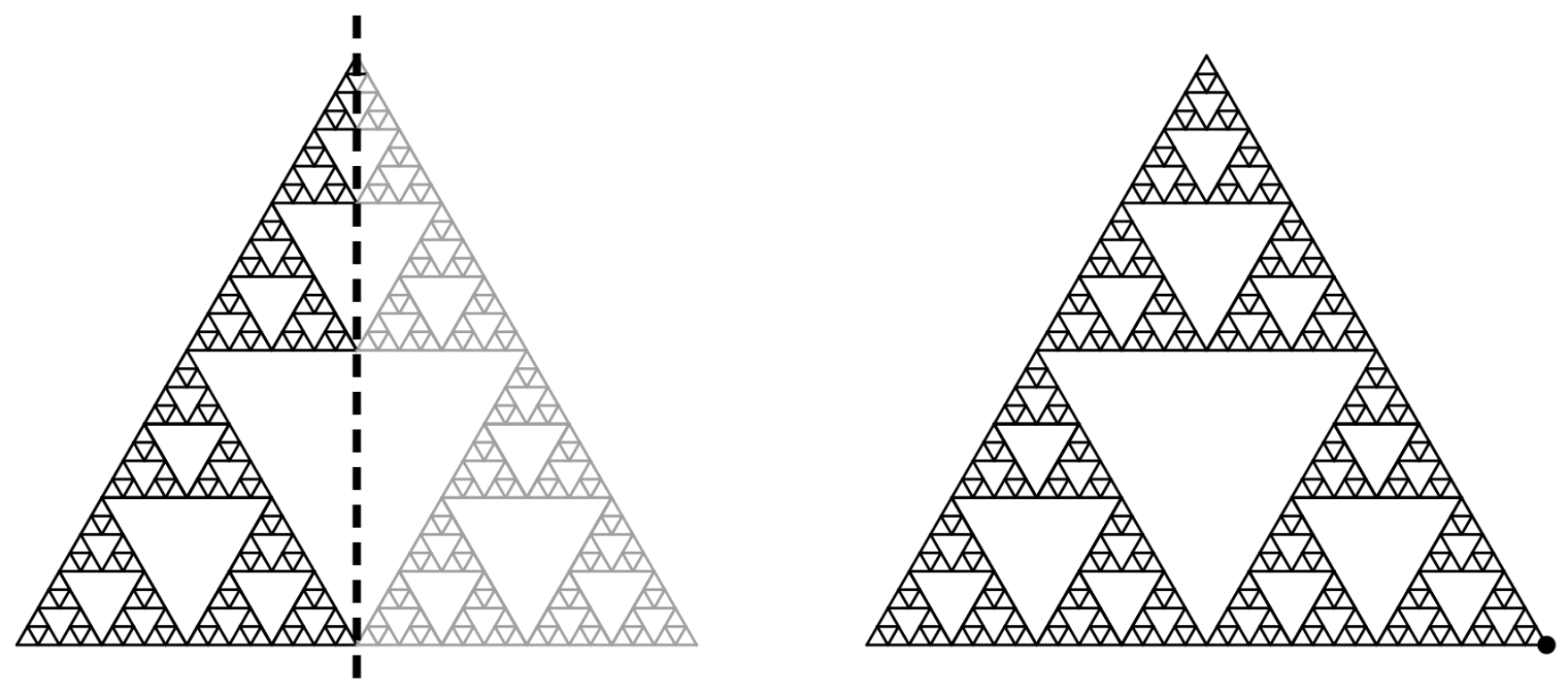}
	\begin{picture}(0,0)
	\put(-5,0){$p_2$}\put(-295,0){$p_1$}
	\put(-227,-10){$\Omega_1$}\put(-73,-10){$\Omega_2$}
	\end{picture}	
	\caption{domains in SG} \label{figure3}
\end{figure}
Then $\{(\Omega_i,D_i)\}_{i=1}^2$ satisfies the BGD condition with a directed graph $(\mathcal{A},\Gamma)$: $\mathcal{A}=\{1,2\}$, $\Gamma=\{\gamma_1,\gamma_2,\gamma_3\}$, where $\gamma_1$ is from $1$ to $1$ associated with $F_3$, $\gamma_2$ is from $1$ to $2$ associated with $F_1$, and $\gamma_3$ is from $2$ to $2$ associated with $F_2$; see Figure \ref{figure4}.
 \begin{figure}[h]
	\includegraphics[width=5cm]{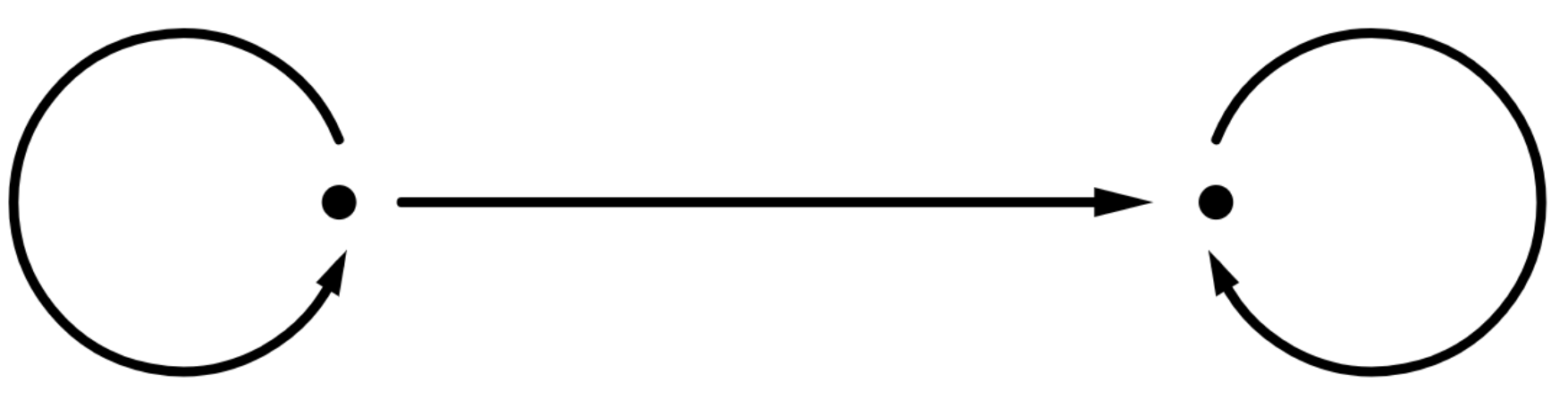}
	\begin{picture}(0,0)
	
	\put(-123,17){$1$}\put(-31,17){$2$}\put(-3,19){$\gamma_3$}\put(-156,19){$\gamma_1$}\put(-82,27){$\gamma_2$}
	\end{picture}	
	\caption{The directed graph $(\mathcal{A},\Gamma)$} \label{figure4}
\end{figure}

Then by a direct calculation, the associated flux transfer matrices are
\begin{equation*}
M_{\gamma_1}=\left(
                                                                                                                       \begin{array}{ccc}
                                                                                                                         1/3 &0&0\\
                                                                                                                         0 &0&0\\
                                                                                                                         0 &0&0\\
                                                                                                                       \end{array}
                                                                                                                     \right), \ M_{\gamma_2}=\left(
          \begin{array}{ccc}
            1 &0& -1/3\\
             0 &0& 0\\
              0&0& 0\\
          \end{array}
        \right), M_{\gamma_3}=\left(
             \begin{array}{ccc}
               2/3 &0& 1/3 \\
                0 &0& 0\\
               1/3 &0& 2/3 \\
             \end{array}
           \right).
\end{equation*}
            By Theorem \ref{th3.1}, we see that the hitting probability from $p_1$ to the boundary $\partial\Omega$ is
\begin{equation*}
\sum_{n=0}^\infty\frac{2}{3^{n+1}}\delta_{F_{3^n1}(p_2)},
\end{equation*}
 where $\delta_x$ is the Dirac measure at $x$.

\medskip

\subsection{Example: hexagasket}\label{ex2}
The hexagasket is a p.c.f. self-similar set generated by the IFS $\{F_i\}_{i=1}^6$, with $F_i(x)=\frac13(x-p_i)+p_i$, where $V_0=\{p_i\}_{i=1}^6$ are the six vertices of a regular hexagon in $\mathbb R^2$. The standard resistance form $(\mathcal E,\mathcal F)$ on $K$ satisfies \cite{S}
\begin{equation*}
\mathcal E[u]=\frac{7}{3}\sum_{i=1}^6\mathcal E[u\circ F_i], \quad \forall u\in\mathcal{F}.
\end{equation*}
Set $p_1=(-1,0)$, $p_2=(-\frac12,-\frac{\sqrt3}2)$, $p_3=(\frac12,-\frac{\sqrt3}2)$, $p_4=(1,0)$, $p_5=(\frac12,\frac{\sqrt3}2)$, $p_6=(-\frac12,\frac{\sqrt3}2)$. Let $D=\overline{p_1p_4}\cap K$, which is a middle-third Cantor set. Let $H=\{x=(x_1,x_2):\ x_2>0\}$ be the (open) upper half-plane. We define the domain $\Omega=K\cap H$, with boundary $D$; see Figure \ref{figurehex}. Then $(\Omega,D)$ satisfies the BGD condition with the directed graph $(\mathcal{A},\Gamma)$ given by $\mathcal{A}=\{1\}$ and $\Gamma=\{\gamma_i\}_{i=1}^2$, where both $\gamma_1$ and $\gamma_2$ are from $1$ to itself.
\begin{figure}[h]
	\includegraphics[width=8cm]{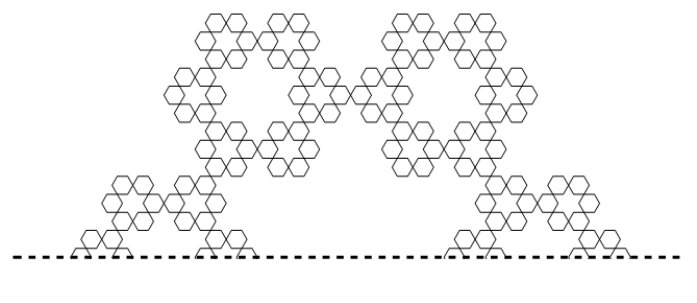}
	\begin{picture}(0,0)
\put(-219,20){$p_1$}\put(-22,20){$p_4$}\put(-69,98){$p_5$}\put(-174,98){$p_6$}
	\end{picture}	
	\caption{a half domain in the hexagasket} \label{figurehex}
\end{figure}
The associated flux transfer matrices are
\begin{align*}
&M_{\gamma_1}=\left(
       \begin{array}{cccccc}
         0 & 0&0&0&0&0 \\
         0 & 0&0&0&0&0 \\
       0 & 0&0&0&0&0 \\
       0 & 0&0&0&0&0\\
       0 & 0&0&0&1/3&0\\
       0 & 0&0&0&2/3&0\\
       \end{array}
     \right), M_{\gamma_2}=\left(
          \begin{array}{cccccc}
            0 & 0&0&0&0&0 \\
            0 & 0&0&0&0&0 \\
       0 & 0&0&0&0&0 \\
       0 & 0&0&0&0&0\\
       0 & 0&0&0&0&2/3\\
       0 & 0&0&0&0&1/3\\
       \end{array}
     \right).
\end{align*}
By Theorem \ref{th3.1}, we obtain the hitting probability from $p_5$ (or $p_6$) is a twisted $(1/3,2/3)$-self-similar measure on $\partial\Omega$.

\subsection{Example: Vicsek set}\label{ex3}
The Vicsek set is a p.c.f. self-similar set generated by the IFS $\{F_i\}_{i=1}^5$, with $F_i(x)=\frac13(x-p_i)+p_i$, where $V_0=\{p_i\}_{i=1}^4$ are the four corner vertices of a square and $p_5$ is its center.  The standard resistance form $(\mathcal E,\mathcal F)$ on $K$  satisfies \cite{S}
\begin{equation*}
\mathcal E[u]=3\sum_{i=1}^5\mathcal E[u\circ F_i], \quad \forall u\in\mathcal{F}.
\end{equation*}
Let $D_1=\overline{p_1p_2}\cap K$ and $D_2=(\overline{p_1p_2}\bigcup\overline{p_2p_3})\cap K$. Then $D_1$ is a middle-third Cantor set and $D_2$ is a union of two copies of $D_1$. Let $\Omega_1=K\setminus D_1$, $\Omega_2=K\setminus D_2$ with boundaries $D_1$, $D_2$ respectively; see Figure \ref{figure5}.
 \begin{figure}[h]
	\includegraphics[width=10cm]{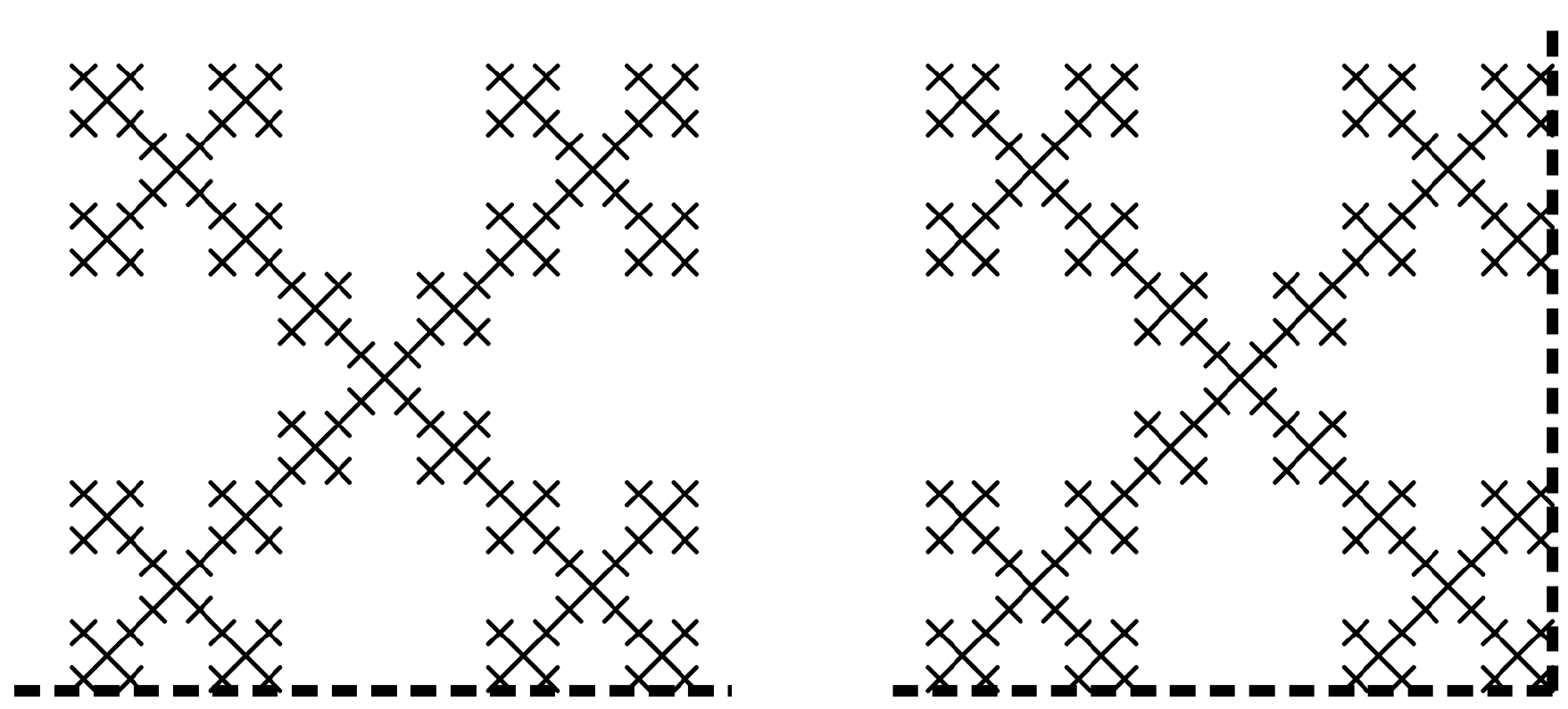}
	\begin{picture}(0,0)
	\put(-160,122){$p_3$}\put(-284,122){$p_4$}
	\put(-284,-4){$p_1$}\put(-160,-4){$p_2$}	\put(-227,-10){$\Omega_1$}\put(-73,-10){$\Omega_2$}
	\end{picture}	
	\caption{domains in the Vicsek set with Cantor boundaries} \label{figure5}
\end{figure}
Then $\{(\Omega_i,D_i)\}_{i=1}^2$ satisfies the BGD condition with the directed graph $(\mathcal{A},\Gamma)$ given by $\mathcal{A}=\{1,2\}$ and $\Gamma=\{\gamma_i\}_{i=1}^5$ as illustrated in Figure \ref{figure6}, where for brevity we treat domains modulo symmetry.
 \begin{figure}[h]
	\includegraphics[width=5cm]{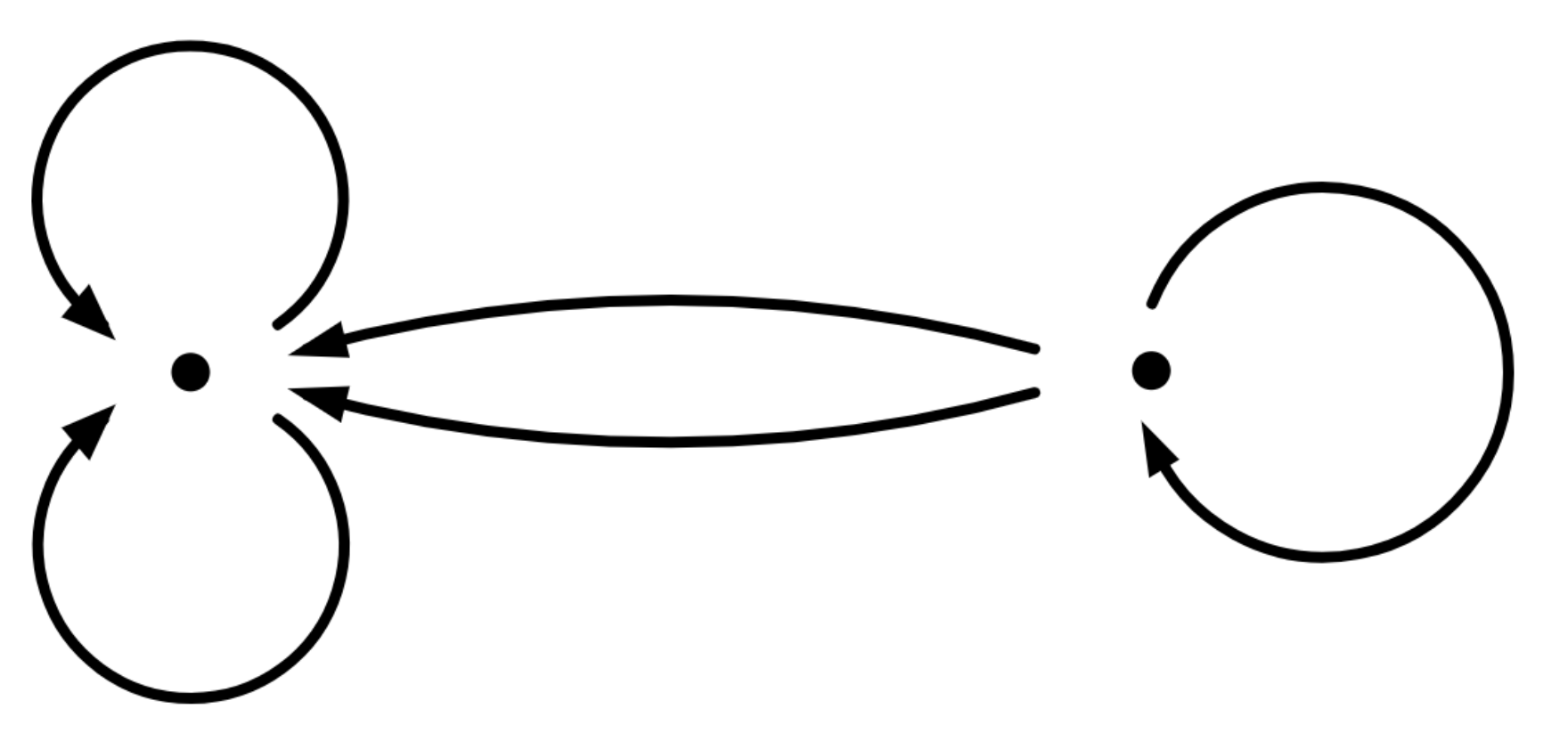}
	\begin{picture}(0,0)
\put(-125,29){$1$}\put(-38,29){$2$}\put(-4,30){$\gamma_5$}\put(-154,15){$\gamma_1$}\put(-154,46){$\gamma_2$}\put(-85,18){$\gamma_3$}\put(-85,46){$\gamma_4$}
	\end{picture}	
	\caption{The directed graph $(\mathcal{A},\Gamma)$ in Example \ref{ex3}} \label{figure6}
\end{figure}
The associated contraction maps of $\{\gamma_i\}_{i=1}^5$ are $F_1, F_2, F_1$, $F_3\circ \kappa$ and $F_2$, where $\kappa$ is the counterclockwise {rotation} by $\frac\pi 2$ around the center $p_5$. By a direct computation, we obtain the associated flux transfer matrices are
\begin{align*}
&M_{\gamma_1}=\left(
       \begin{array}{cccc}
       0 & 0&0&0 \\
       0 & 0&0&0 \\
        0& 0 & 1/2 &0\\
        0& 0 & 1/2 &0\\
       \end{array}
     \right), M_{\gamma_2}=\left(
          \begin{array}{cccc}
           0 & 0&0&0 \\
       0 & 0&0&0 \\
         0&0 & 0 & 1/2 \\
           0&0 & 0 & 1/2 \\
          \end{array}
        \right), M_{\gamma_3}=\left(
          \begin{array}{cccc}
           0 & 0&0&0 \\
       0 & 0&0&0 \\
         0&0 & 0 & 0 \\
           0&0 & \frac{\sqrt{69}-7}{4} & 0 \\
             \end{array}
           \right),\\ &M_{\gamma_4}=\left(
                \begin{array}{cccc}
           0 & 0&0&0 \\
       0 & 0&0&0 \\
         0&0 & 0 & 0 \\
           0&0 & 0 & \frac{\sqrt{69}-7}{4}\\
             \end{array}
                         \right), M_{\gamma_5}=\left(
               \begin{array}{cccc}
           0 & 0&0&0 \\
       0 & 0&0&0 \\
         0&0 & 0 & 0 \\
           0&0 & 0 & \frac{9-\sqrt{69}}{2} \\
             \end{array}
           \right).
\end{align*}
 For $\Omega_1$, the hitting probability from $p_3$ (or $p_4$) is the $(1/2,1/2)$-self-similar measure on $\partial\Omega_1$. For $\Omega_2$, the hitting probability $\mu$ from $p_4$ to $\partial\Omega_2$ can be described as: for any $k\geq0$, the restriction of $\mu$ on the boundary of $F_{2^k1}(\Omega_1)$
 is a $(1/2,1/2)$-self-similar measure with total weight $\left(\frac{9-\sqrt{69}}{2}\right)^k(\frac{\sqrt{69}-7}{4})$.

\bibliographystyle{siam}

\bigskip
\end{document}